\def\0{\emptyset}
\def\GG{\mathcal{G}}
\newcommand{\ch}{\mathrm{ch}}
\begin{document}
\newtheorem{theorem}{Theorem}
\newtheorem{corollary}[theorem]{Corollary}
\newtheorem{definition}[theorem]{Definition}
\newtheorem{conjecture}[theorem]{Conjecture}
\newtheorem{problem}[theorem]{Problem}
\newtheorem{lemma}[theorem]{Lemma}
\newtheorem{observation}[theorem]{Observation}
\newtheorem{proposition}[theorem]{Proposition}
\newcommand{\remark}{\medskip\par\noindent {\bf Remark.~~}}
\newcommand{\pp}{{\it p.}}
\newcommand{\de}{\em}

\title{Fractional coloring of planar graphs of girth five}
\author{Zden\v{e}k Dvo\v{r}\'{a}k\thanks{Computer Science Institute (CSI) of Charles University,
           Malostransk\'{e} n\'am\v{e}st\'{\i} 25, 118 00 Prague,
	              Czech Republic. E-mail: \protect\href{mailto:rakdver@iuuk.mff.cuni.cz}{\protect\nolinkurl{rakdver@iuuk.mff.cuni.cz}}. Supported by project 17-
04611S (Ramsey-like aspects of graph coloring) of Czech Science Foundation.}\and Xiaolan Hu\thanks{School of Mathematics and Statistics \& Hubei Key Laboratory of Mathematical Sciences, Central China Normal University,
Wuhan 430079, PR China. Partially supported by NSFC
under grant number 11601176 and NSF of Hubei Province under grant number 2016CFB146.}
}
\date{}
\maketitle

\begin{abstract}
A graph $G$ is \emph{$(a:b)$-colorable} if there exists an assignment of $b$-element subsets of $\{1,\ldots,a\}$ to vertices of $G$ such that
sets assigned to adjacent vertices are disjoint. We first show that for every triangle-free planar graph $G$ and a vertex $x\in V(G)$,
the graph $G$ has a set coloring $\varphi$ by subsets of $\{1,\ldots,6\}$ such that $|\varphi(v)|\geq 2$ for $v\in V(G)$ and $|\varphi(x)|=3$.
As a corollary, every triangle-free planar graph on $n$ vertices is $(6n:2n+1)$-colorable. We further use this result to prove
that for every $\Delta$, there exists a constant $M_{\Delta}$ such that every planar graph $G$ of girth at least five and maximum degree $\Delta$
is $(6M_{\Delta}:2M_{\Delta}+1)$-colorable.  Consequently, planar graphs of girth at least five with bounded maximum degree $\Delta$
have fractional chromatic number at most $3-\frac{3}{2M_{\Delta}+1}$.

\vskip 0.2cm
\noindent{\bf Keywords:} planar graph; fractional coloring; triangle-free; girth
\end{abstract}

\section{Introduction}

A function that assigns sets to all vertices of a graph is a \emph{set coloring} if the sets
assigned to adjacent vertices are disjoint.
For positive integers $a$ and $b\le a$, an {\em $(a:b)$-coloring} of a graph $G$ is a set coloring
with range $\binom {\{1,\ldots, a\}}{b}$, i.e., a set coloring that to each vertex assigns a $b$-element
subset of $\{1,\ldots, a\}$.
The concept of $(a:b)$-coloring is a generalization of the conventional vertex coloring. In fact,
an $(a:1)$-coloring is exactly an ordinary proper $a$-coloring.
The {\em fractional chromatic number} of $G$, denoted by $\chi_f(G)$, is the infimum of the
fractions $a/b$ such that $G$ admits an $(a:b)$-coloring. Note that $\chi_f(G)\leq \chi(G)$ for any graph $G$,
where $\chi(G)$ is the chromatic number of $G$.

Much of the interest in the chromatic properties of triangle-free planar graphs stems from
Gr\"{o}tzsch's theorem~\cite{grotzsch1959}, stating that such graphs are 3-colorable.
Even in the fractional coloring setting, it is not possible to significantly improve Gr\"{o}tzsch's theorem.
For any positive integer $n$ such that $n\equiv 2 \pmod 3$, Jones~\cite{Jon84} constructed a
triangle-free planar graph on $n$ vertices with independence number
$\frac{n+1}{3}$.  Since $\alpha(G)\ge |V(G)|/\chi_f(G)$, these graphs have
fractional chromatic number at least $\frac{3n}{n+1}=3-\frac{3}{n+1}$ (in fact, they are $(3n:n+1)$-colorable).
Thus, there exist triangle-free planar graphs with fractional chromatic number arbitrarily close to 3.
On the other hand, Dvo\v{r}\'{a}k, Sereni and Volec~\cite{frpltr} showed that there does not exist a triangle-free planar graph with fractional
chromatic number exactly 3 by establishing the following upper bound.

\begin{theorem}[Dvo\v{r}\'{a}k, Sereni and Volec~\cite{frpltr}]\label{(9n:3n+1)}
Every planar triangle-free graph on $n$ vertices is $(9n:3n+1)$-colorable, and thus it has fractional
chromatic number at most $3-\frac{3}{3n+1}$.
\end{theorem}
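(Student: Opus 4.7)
The natural route is to decouple the theorem into a local strengthening (all vertices get many colors, one distinguished vertex gets one extra) and a simple combinatorial ``rotation'' that converts the $n$ different strengthened colorings, one for each choice of the distinguished vertex, into the desired $(9n:3n+1)$-coloring. This is the same template already announced in the paper's abstract for the improved bound with $6$ in place of $9$, so it is clearly the intended shape of the argument.

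\textbf{Step 1 (Local strengthening).} I would try to prove the following lemma: for every triangle-free planar graph $G$ and every $x\in V(G)$, there is a set coloring $\varphi$ of $G$ by subsets of $\{1,\ldots,9\}$ with $|\varphi(v)|\ge 3$ for all $v\in V(G)$ and $|\varphi(x)|\ge 4$. The natural proof is by induction on $|V(G)|$ (or a suitable potential function) combined with discharging, exploiting that a triangle-free planar graph satisfies $|E(G)|\le 2|V(G)|-4$, so the average degree is below $4$. A set of reducible configurations would be identified (short separating cycles, vertices of degree $\le 3$ at suitable distance from $x$, $4$-faces with special neighborhoods, etc.); in each case one deletes or contracts, applies induction with a carefully chosen new ``distinguished'' vertex to control the behavior near the reduction site, and then extends the obtained set coloring to the deleted part by combinatorial list-coloring style arguments (using that each uncolored vertex has at most $3$ colored neighbors, so at least $9-9=\cdots$ colors remain — more precisely, one tracks how many colors are forbidden). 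Discharging on the minimal counterexample then shows that one of the reducible configurations must occur.

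\textbf{Step 2 (Rotation/summing).} Label $V(G)=\{v_1,\ldots,v_n\}$. Apply the lemma with $x=v_i$ to obtain $\varphi_i:V(G)\to\binom{\{1,\ldots,9\}}{*}$ with $|\varphi_i(v_i)|\ge 4$ and $|\varphi_i(v)|\ge 3$ otherwise. Relabel the palette of $\varphi_i$ to the disjoint block $\{9(i-1)+1,\ldots,9i\}$ of $\{1,\ldots,9n\}$, and define
\[
\Phi(v)=\bigcup_{i=1}^{n}\varphi_i(v).
\]
Because the blocks are disjoint and each $\varphi_i$ is a set coloring, $\Phi$ is a set coloring into $\{1,\ldots,9n\}$. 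For any $v=v_j$,
\[
|\Phi(v_j)|=\sum_{i=1}^{n}|\varphi_i(v_j)|\ge 4+3(n-1)=3n+1,
\]
and truncating each $\Phi(v_j)$ to exactly $3n+1$ elements yields a $(9n:3n+1)$-coloring. The fractional chromatic number bound $\chi_f(G)\le 9n/(3n+1)=3-3/(3n+1)$ is immediate from the definition.

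\textbf{Main obstacle.} The rotation step is essentially free; all the work lives in Step~1. The hard part is to assemble a set of reducible configurations that (i) cover all minimal counterexamples via discharging for triangle-free planar graphs and (ii) interact well with the ``$|\varphi(x)|\ge 4$'' strengthening, since the reduction must produce a smaller instance that still has a valid choice of distinguished vertex. Handling $4$-cycles (which abound in triangle-free planar graphs) and short cycles close to $x$ is typically where such proofs become delicate, because the extra color at $x$ must be propagated or absorbed without collapsing the slack provided by Euler's formula.
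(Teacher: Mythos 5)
This theorem is quoted from~\cite{frpltr} and is not proved in the present paper; the paper instead proves the strictly stronger Corollary~\ref{frac-tri-free} (every triangle-free planar graph on $n$ vertices is $(6n:2n+1)$-colorable) via exactly the two-step template you describe: a local strengthening (Theorem~\ref{x-tri-free}, a set coloring by subsets of $\{1,\ldots,6\}$ with $|\varphi(v)|\ge 2$ for all $v$ and $|\varphi(x)|=3$ for the chosen $x$) followed by the palette-rotation step. Your Step~2 is correct and is identical to the derivation of Corollary~\ref{frac-tri-free} from Theorem~\ref{x-tri-free} carried out in the paper after the corollary is stated.

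The gap is that your Step~1 is only a sketch: you state the lemma and gesture at ``induction plus discharging plus reducible configurations'' without actually producing the reductions, the precoloring-extension machinery, or the discharging argument, and you yourself flag that all the work lives there. Two further remarks. First, your proposed lemma (palette $\{1,\ldots,9\}$, $|\varphi(v)|\ge 3$ for all $v$, $|\varphi(x)|\ge 4$) is strictly weaker than the paper's Theorem~\ref{x-tri-free} and in fact follows from it with no discharging at all: take the coloring $\varphi$ from Theorem~\ref{x-tri-free} on colors $\{1,\ldots,6\}$, take a proper Gr\"otzsch $3$-coloring $c$ on colors $\{7,8,9\}$, and set $\psi(v)=\varphi(v)\cup\{c(v)\}$; then $|\psi(v)|\ge 3$ for all $v$ and $|\psi(x)|=4$. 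Second, if you do want to prove the lemma from scratch, the hard part is precisely what you postpone: the interaction between short cycles, the distinguished vertex, and the precoloring of a small outer face is where the paper (Sections~2.1--2.4) spends essentially all of its effort, and none of that content appears in your proposal. As written, the proposal has the right skeleton but leaves the entire technical core unestablished.
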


Note that the graphs built by Jones~\cite{Jon84} contain a large number
of separating 4-cycles.  Motivated by this observation,
Dvo\v{r}\'{a}k, Sereni and Volec~\cite{frpltr} conjectured
that triangle-free plane graphs without separating 4-cycles cannot have fractional chromatic
number arbitrarily close to $3$, and proved this is the case under an additional assumption that the
maximum degree is at most $4$.  They also remarked that since faces of length four are usually easy to deal
with in the proofs by collapsing, a key step would be to prove this conjecture for planar graphs of girth at least five
(this special case was previously conjectured by Dvo\v{r}\'ak and Mnich~\cite{dmnichfull}).

\begin{conjecture}\label{conj-main}
There exists a real number $c<3$ such that every planar graph of girth at least five has fractional chromatic number at most $c$.
\end{conjecture}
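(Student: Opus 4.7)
The plan is to follow the two-step scheme foreshadowed by the abstract, with the main gap being the removal of a maximum-degree restriction. The cornerstone would be a strengthened set-coloring lemma for triangle-free planar graphs: for every triangle-free planar graph $G$ and every designated vertex $x\in V(G)$, there is a set coloring $\varphi\colon V(G)\to 2^{\{1,\ldots,6\}}$ in which adjacent vertices receive disjoint sets, $|\varphi(v)|\geq 2$ for all $v$, and $|\varphi(x)|\geq 3$. This is a ``Gr\"otzsch-with-slack'' strengthening of Theorem~\ref{(9n:3n+1)}: instead of using $9n$ colors to distribute $3n+1$ per vertex, one uses only $6$ colors but distributes $2$ per vertex with a bonus unit at $x$.

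To prove this lemma I would use the standard reducibility-plus-discharging toolkit. Assume a minimum counterexample $(G,x)$. Rule out small reducible configurations (low-degree vertices, short faces, pairs of $3$-vertices on a common face, etc., possibly treated differently near $x$) by case analysis: delete or identify a small set of vertices, apply the induction hypothesis to the resulting smaller graph with a carefully chosen marked vertex, and extend the coloring using the flexibility of assigning $2$- or $3$-element subsets of $\{1,\ldots,6\}$. Then apply discharging on the planar embedding, exploiting that girth $\ge 4$ gives $\sum_v(d(v)-4)+\sum_f(\ell(f)-4)=-8$ by Euler's formula, to derive a contradiction.

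Given the lemma, the announced $(6n{:}2n+1)$-colorability of every triangle-free planar graph follows by a simple averaging argument: use one fresh copy of $\{1,\ldots,6\}$ for each $x\in V(G)$, apply the lemma for each $x$, and observe that every vertex $v$ collects at least $2(n-1)+3=2n+1$ colors out of $6n$. For the bounded-degree specialization, the approach would be to find, inside a girth-$5$ planar graph of maximum degree $\Delta$, a bounded number $M_\Delta$ of ``flexible gadgets'' whose size-$3$ bonuses can be combined into a single $(6M_\Delta{:}2M_\Delta+1)$-coloring, using that bounded degree forces enough local sparsity to keep successive applications of the lemma sufficiently independent.

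The hard part is removing the bounded-degree assumption to obtain the full conjecture. High-degree vertices in girth-$5$ planar graphs can be surrounded by tight local structure in which the lemma's single-vertex bonus does not amplify into a linear gain; moreover, Jones's constructions show that the analogous statement is false without the girth-$5$ hypothesis, so any approach has to genuinely use that there are no $4$-faces, not just no triangles. Getting past this obstacle probably requires either a lemma that delivers many bonus units at once with controlled interactions, or a different global averaging scheme altogether; I expect this to be the main remaining technical hurdle.
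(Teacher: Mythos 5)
The statement you were asked about is a \emph{conjecture}, and the paper does not prove it in full; Conjecture~\ref{conj-main} remains open. What the paper actually proves is the bounded-maximum-degree special case (Theorem~\ref{frac-girth5}), built on the strengthened triangle-free result (Theorem~\ref{x-tri-free}) and its averaging consequence (Corollary~\ref{frac-tri-free}). On those two pieces, your reconstruction is on target: the key lemma (an $\{x\}$-enhanced coloring by subsets of $\{1,\ldots,6\}$ with $|\varphi(v)|\ge 2$ everywhere and $|\varphi(x)|=3$), the reducibility-plus-discharging proof scheme with the $\sum(d(v)-4)+\sum(\ell(f)-4)=-8$ accounting, and the fresh-copy averaging yielding $(6n:2n+1)$-colorability all match the paper. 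You also correctly diagnose that removing the degree bound is the genuine open hurdle; on that point you and the authors are in the same place.

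Where your sketch has an unacknowledged gap is the bounded-degree step itself. ``Find a bounded number $M_\Delta$ of flexible gadgets whose size-$3$ bonuses can be combined'' and ``bounded degree forces enough local sparsity to keep successive applications of the lemma sufficiently independent'' restate the desired conclusion without supplying a mechanism: a priori, iterating the one-vertex lemma $M_\Delta$ times gives $M_\Delta$ unrelated colorings, and you give no reason they combine. The paper's actual mechanism is Theorem~\ref{X-girth5}: the bonus is granted \emph{simultaneously} to a set $X$ of pairwise far-apart low-degree vertices, and its proof is not a local sparsity argument. It splits each $x\in X$ into a cycle $C_x$, extracts an $S$-critical subgraph with $S=\bigcup_x C_x$, and invokes the strong hyperbolicity of the class of such critical girth-$5$ graphs for $(6:2)$-coloring (Theorem~\ref{cylinder}, from the authors' companion paper) together with a bound of Postle and Thomas (Theorem~\ref{Ckfraccri}) to cap the size of the critical subgraph; the distance-$s$ hypothesis then forces a contradiction by a volume count. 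Only after that does the power-graph coloring with $M_\Delta=\Delta^s$ color classes do the combining. Without a substitute for that hyperbolicity/criticality input, your ``independence'' assertion is not a proof step but a hope.
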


The purpose of this work is to establish the following upper bound on the
fractional chromatic number of planar graphs of girth at least five with maximum
degree $\Delta$, proving Conjecture~\ref{conj-main} for graphs with bounded maximum degree.

\begin{theorem}
\label{frac-girth5}
For every positive integer $\Delta$, there exists a positive integer $M_{\Delta}$ as follows.
If $G$ is a planar graph of girth at least five and maximum degree at most $\Delta$, then $G$ is $(6M_\Delta:2M_\Delta+1)$-colorable,
and thus $\chi_f(G)\leq 3-\frac{3}{2M_{\Delta}+1}$.
\end{theorem}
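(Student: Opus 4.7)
The plan is to combine $M_\Delta$ set colorings of $G$, each using a disjoint copy of $\{1,\ldots,6\}$, into a single $(6M_\Delta:2M_\Delta+1)$-coloring. For each copy $i$, I would aim to find a set coloring $\varphi_i:V(G)\to 2^{\{1,\ldots,6\}}$ satisfying $|\varphi_i(v)|\geq 2$ for every $v\in V(G)$, and additionally $|\varphi_i(v)|\geq 3$ for every $v$ in some prescribed set $V_i$. If $V_1,\ldots,V_{M_\Delta}$ partition $V(G)$, then defining $\Phi(v)$ to be the disjoint union of the $\varphi_i(v)$ in the respective copies gives $|\Phi(v)|\geq 2M_\Delta+1$ for every vertex $v$, as required.

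To choose the $V_i$, I exploit the bounded maximum degree: for a constant $r$ to be fixed later, the $r$-th power $G^r$ has maximum degree at most $\Delta(\Delta-1)^{r-1}$, hence admits a proper $M_\Delta$-coloring with $M_\Delta\leq\Delta(\Delta-1)^{r-1}+1$. The resulting color classes $V_1,\ldots,V_{M_\Delta}$ are pairwise-spread sets in $G$: any two vertices of a common $V_i$ are at distance greater than $r$ in $G$, so the balls of radius $\lfloor r/2\rfloor$ (in $G$) around distinct vertices of $V_i$ are pairwise disjoint.

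The crux of the argument is a strengthening of the first theorem stated in the excerpt: for any planar graph $G$ of girth at least $5$ and any set $S\subseteq V(G)$ whose vertices are pairwise at distance greater than $r$, for a constant $r=r(\Delta)$, there exists a set coloring $\varphi:V(G)\to 2^{\{1,\ldots,6\}}$ with $|\varphi(v)|\geq 2$ for all $v$ and $|\varphi(s)|\geq 3$ for all $s\in S$. Applying this strengthening to each $V_i$ and taking the disjoint union of the resulting colorings then completes the proof, with $M_\Delta$ determined by the chromatic number of $G^r$ and the required radius $r$.

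The main obstacle is proving the strengthening. I would argue by induction on $|S|$: the base case $|S|=1$ is exactly the first theorem, and for the inductive step I process one new vertex $s\in S$ by modifying the current coloring only inside $B(s,\lfloor r/2\rfloor)$. Disjointness of these balls ensures the previously performed upgrades are preserved. The delicate part is each local modification: one must shift the coloring inside a bounded ball so that some color disappears from $N(s)$, freeing a slot for $s$, while maintaining $|\varphi(v)|\geq 2$ elsewhere and keeping adjacent colors disjoint. Girth $5$ should enter essentially here, making the ball sufficiently tree-like that one can apply the first theorem to a suitable bounded auxiliary subgraph built from the ball (with $s$ as its distinguished vertex) and reconcile its output with the existing global coloring at the boundary; the bounded-degree assumption keeps this auxiliary graph of size bounded in $\Delta$, reducing the gluing to a finite analysis and fixing the constant $r$.
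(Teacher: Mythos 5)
Your high-level reduction matches the paper: combine $M_\Delta$ colorings on disjoint palettes of size $6$, and choose the classes $V_1,\ldots,V_{M_\Delta}$ as color classes of a proper coloring of a suitable power of $G$, so that each $V_i$ is a set of vertices pairwise far apart in $G$. This is exactly what the paper does (it colors the graph obtained by joining all pairs within distance $s-1$, which amounts to the same thing), and the ``spread-set'' intermediate statement you isolate is precisely Theorem~\ref{X-girth5}.

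The gap is in your proposed proof of that intermediate statement. Your plan is induction on $|S|$ with a local recoloring step: given a coloring valid for the already-processed vertices, modify it only inside $B(s,\lfloor r/2\rfloor)$ so that $s$ also gets a third color, matching the existing coloring at the boundary of the ball. Nothing in your sketch justifies that such a boundary-compatible modification exists. Theorem~\ref{x-tri-free} produces \emph{some} coloring of an auxiliary graph with $s$ tripled, not one agreeing with a prescribed coloring on a long boundary; the only boundary-precoloring version available (Theorem~\ref{X-tri-free}) requires the boundary to be a cycle of length at most $5$, whereas the boundary of a radius-$\rho$ ball may be long. Also, girth $5$ does not make balls ``sufficiently tree-like'': planar girth-$5$ graphs (e.g.\ the dodecahedron) have highly non-treelike neighborhoods, and there is no easy finite gluing analysis. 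In fact, the global obstruction you are trying to rule out — that a boundary coloring might fail to extend inward with the center tripled — is exactly what must be excluded, and the paper does this with a genuinely global argument: it splits each vertex of $X$ into a short cycle $C_x$, precolors these cycles using the single-vertex colorings $\psi_x$, and argues that any minimal non-extendable subgraph would be $S$-critical for $(6:2)$-coloring, hence small by a strong hyperbolicity theorem for such critical graphs (from a companion paper), contradicting the large pairwise distances between the $C_x$. That hyperbolicity/criticality machinery is the essential ingredient your sketch is missing; without some quantitative bound on how far a local coloring defect can propagate, the inductive local-modification scheme does not close.
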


Theorem~\ref{frac-girth5} is an easy corollary of the following result on special set colorings of planar graph of girth at least five.

\begin{theorem}\label{X-girth5}
For every positive integer $k$, there exists a positive integer $s$ such that the following holds.
Let $G$ be a planar graph of girth at least five and let $X$ be a set of vertices of $G$ of degree at most $k$.
If the distance between vertices of $X$ is at least $s$, then $G$ has a set coloring $\varphi$ by subsets of
$\{1,\ldots,6\}$ such that $|\varphi(v)|\geq 2$ for $v\in V(G)$ and $|\varphi(x)|=3$ for $x\in X$.
\end{theorem}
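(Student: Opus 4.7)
The plan is to prove Theorem~\ref{X-girth5} by induction on $|X|$, using the triangle-free single-distinguished-vertex result stated in the abstract as both the base case and the principal tool. For $|X|\le 1$, the conclusion is immediate since girth at least $5$ implies triangle-free, so the single-vertex theorem applies directly. For the inductive step, fix $x\in X$, choose a radius $r$ depending only on $k$ (to be determined), and let $B = B_r(x)$ be the ball of radius $r$ around $x$. Since the other special vertices are at distance at least $s$ from $x$, choosing $s>r$ ensures $B\cap X=\{x\}$, and the bounded degree of $x$ together with girth at least $5$ controls $|B|$ by a function of $k$ and $r$ alone.

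Next, let $G'$ be obtained from $G$ by deleting the interior of $B$. Distances between vertices of $X\setminus\{x\}$ can only increase in $G'$, so $(G', X\setminus\{x\})$ still satisfies the hypotheses and, by the induction hypothesis, admits a set coloring $\varphi'$ with $|\varphi'(v)|\ge 2$ everywhere and $|\varphi'(y)|=3$ for $y\in X\setminus\{x\}$. What remains is to extend $\varphi'$ across $B$ so that every vertex in $B$ receives a $2$-set and $x$ receives a $3$-set.

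The core of the proof is this extension step, which amounts to a precoloring-extension problem on the triangle-free planar subgraph induced on $B$ together with its boundary. The key tool I would establish is a strengthening of the single-vertex theorem: for every triangle-free planar graph $H$, vertex $x\in V(H)$ of degree at most $k$, and subset $Y\subseteq V(H)$ at distance at least $r(k)$ from $x$, every proper set $\{1,\ldots,6\}$-coloring of $Y$ with each color set of size at least $2$ extends to a set coloring of all of $H$ with $|\varphi(x)|=3$ and $|\varphi(v)|\ge 2$ for every $v\in V(H)\setminus Y$. Given this strengthening, applying it with $Y$ equal to the boundary of $B$ in $G$ completes the extension, and setting $s:=2r(k)+1$ closes the induction.

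The main obstacle is establishing this precoloring-extension strengthening. One natural plan is to first ignore the precoloring on $Y$ and apply the single-vertex theorem to $H$ (or a suitable enlargement of it) to get a baseline coloring $\psi$ with $|\psi(x)|=3$, then modify $\psi$ on and near $Y$ to match the prescribed boundary values while preserving $|\psi(x)|=3$. The large distance between $x$ and $Y$, the bounded degree at $x$, and the tree-like local structure afforded by girth at least $5$ should supply enough slack to perform Kempe-type swaps or local permutations that propagate the required changes without disturbing the color set at $x$. An alternative route is to argue by contradiction on a minimum counterexample to the strengthening and rule out reducible configurations via a discharging argument tailored to girth-$5$ planar graphs; in either case, the chief technical difficulty is controlling how an arbitrary boundary precoloring interacts with the rigid requirement $|\varphi(x)|=3$.
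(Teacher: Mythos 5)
Your plan rests on a ``precoloring-extension strengthening'' of the single-vertex theorem: that for a triangle-free planar graph $H$, a vertex $x$ of degree $\le k$, and \emph{any} subset $Y$ at distance $\ge r(k)$ from $x$, \emph{every} proper assignment of $2$-sets to $Y$ extends to a set coloring of all of $H$ with $|\varphi(x)|=3$. This lemma is false. Nothing in your hypotheses prevents $Y$ from containing, say, three pairwise non-adjacent vertices $z_1,z_2,z_3$ that all lie on a common $5$-face together with two vertices $w_1,w_2 \notin Y$; precoloring $z_1,z_2,z_3$ with $\{1,2\},\{3,4\},\{5,6\}$ respectively is proper (they are non-adjacent), but there is no way to extend to the face. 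More simply, $Y$ may contain three independent vertices all adjacent to a single $w\notin Y$ with pairwise disjoint $2$-sets, which leaves no colors at all for $w$. So the lemma cannot hold for arbitrary $Y$; precoloring extension of this strength fails already on tiny examples, which is precisely why the paper only proves precoloring extension for a single short cycle (Theorem~\ref{X-tri-free}) and builds everything else around that restriction. A second, more minor slip: you assert that $|B_r(x)|$ is bounded by a function of $k$ and $r$ alone, but only $x$ has bounded degree; the other vertices of $G$ are unrestricted, so $B_r(x)$ can be arbitrarily large for $r\ge 2$.

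The paper takes a genuinely different route that sidesteps the need for any general boundary-precoloring lemma. For each $x\in X$, it first fixes an $\{x\}$-enhanced coloring $\psi_x$ of all of $G$ (using the single-vertex result, as you intended). It then \emph{splits} each $x$ into a short cycle $C_x$ of length $2\deg(x)$ (or $5$ if $\deg(x)=2$), and precolors $S=\bigcup_x C_x$ so that the colors on $C_x$ agree with $\psi_x$ near $x$. The crucial step is showing this particular precoloring of $S$ extends to the split graph $G'$. Rather than extending across balls inductively, the paper argues by contradiction: a minimal failing subgraph $G''_0\supseteq S''$ is $S''$-critical for $(6:2)$-coloring, and a criticality theorem (Theorem~\ref{Ckfraccri}, derived from the strong hyperbolicity result in the companion paper) bounds $|V(G''_0)|$ linearly in $\sum |C_x|$. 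On the other hand, since the $C_x$ with $x\in X''$ are pairwise far apart and $G''_0$ is connected, a disjoint-balls counting argument gives a larger lower bound on $|V(G''_0)|$. These two bounds contradict each other once $s$ is chosen large enough. If $|X''|\le 1$, the global coloring $\psi_x$ already provides the extension. So the hard content is outsourced to the hyperbolicity/criticality machinery, not to a local extension lemma; without that machinery (or some substitute you would need to prove), your inductive plan cannot close.
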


Using standard techniques, we can argue that it suffices to prove Theorem~\ref{X-girth5} in the special case $|X|=1$.
In this special case, we only need to assume that the graph is triangle-free (rather than having girth at least five).

\begin{theorem}
\label{x-tri-free}
Let $G$ be a triangle-free planar graph. For any vertex $x\in V(G)$, the graph $G$ has a set
coloring $\varphi$ by subsets of $\{1,\ldots,6\}$ such that
$|\varphi(v)|\geq 2$ for $v\in V(G)$ and $|\varphi(x)|=3$.
\end{theorem}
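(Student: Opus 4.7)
The plan is to proceed by minimum counterexample combined with the discharging method that is standard for Gr\"otzsch-type theorems. Let $G$ be a counterexample minimizing $|V(G)|+|E(G)|$, and derive a contradiction from its structure.

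First I would sharpen the inductive hypothesis to ask for $|\varphi(v)|=2$ for $v\ne x$ and $|\varphi(x)|=3$; this is equivalent to the original statement, since surplus colors can always be dropped. Under this tightened form, simple deletions force $G$ to be $2$-connected and of minimum degree at least $3$ away from $x$. The prototype reduction is the degree-$2$ case: if $v\ne x$ has neighbors $u,w$, then by minimality $G-v$ admits a valid coloring, and since $\varphi(u)\cup\varphi(w)$ blocks at most $4$ colors of $\{1,\ldots,6\}$, two colors remain for $v$. A likely further strengthening is a precoloring-extension variant, in which $\varphi(x)$ may be prescribed (up to permutation of colors), which I would use for reductions whose deleted subgraph touches $x$.

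Next I would assemble a list of reducible configurations---bounded subgraphs $H$ that cannot appear in $G$. Natural candidates include pairs of adjacent degree-$3$ vertices, degree-$3$ vertices all of whose neighbors have degree $3$, $4$-faces incident with several degree-$3$ vertices, and small configurations inside a bounded ball around $x$. Each reduction deletes or identifies a bounded number of vertices, applies minimality, and extends the coloring by a direct case analysis in the Kneser graph $K(6,2)$ (the Petersen graph), where disjointness of $2$-subsets of $\{1,\ldots,6\}$ translates into edges. Configurations near $x$ require particular care, since the extension there must preserve the $3$-set at $x$ rather than only a $2$-set.

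Finally I would run a discharging argument based on Euler's formula. Assign initial charge $d(v)-4$ to each vertex and $\ell(f)-4$ to each face; triangle-freeness gives $\ell(f)\ge 4$, so face charges are nonnegative and the total charge is $-8$. Appropriate discharging rules would transfer charge from faces of length $\ge 5$ and higher-degree vertices to degree-$3$ vertices, with local corrections near $x$, aiming to make every final charge nonnegative and thereby contradicting the negative total. The main obstacle will be step two: compiling a rich enough collection of reducible configurations, especially in the neighborhood of $x$, so that the discharging balance closes. The reducibility proofs themselves reduce to small list-coloring problems in $K(6,2)$, and handling these uniformly---above all, those configurations that interact with $x$---is the most delicate ingredient of the argument.
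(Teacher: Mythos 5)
Your high-level plan (minimal counterexample, list of reducible configurations, discharging with charges $d(v)-4$ and $\ell(f)-4$) matches the paper's framework, but two of your key design choices would make the argument collapse, and the third understates where nearly all the real work lies.

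First, the strengthening you propose --- merely prescribing $\varphi(x)$ --- is too weak to carry the induction. When you cut along a short separating cycle or identify two vertices to kill a configuration, you need to propagate a fixed coloring of a whole short cycle, not just of one vertex. The paper instead proves a stronger statement: $G$ is a plane graph whose \emph{outer face is bounded by a cycle $C$ of length at most $5$}, $X\subseteq V(C)$ has size at most one, and every $X$-enhanced coloring of $C$ extends to $G$. That precolored boundary cycle is what makes the ``delete a separating $(\le 5)$-cycle and glue'' reductions go through; prescribing only $\varphi(x)$ does not.

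Second, your discharging step reasons ``triangle-freeness gives $\ell(f)\ge 4$, so face charges are nonnegative'' and then asks faces of length $\ge 5$ to feed degree-$3$ vertices. But a $4$-face has charge exactly zero and can give nothing, so you cannot pay for degree-$3$ vertices incident only with $4$-faces. You must actually \emph{eliminate} $4$-faces (and, in the paper's setup, also $\ge 6$-faces, so that every non-outer face is a $5$-face); this is a structural reduction, not a charge-accounting observation. In the paper this is the content of Lemmas~\ref{4-face} and~\ref{6-face} and Corollary~\ref{5-face}, and it relies crucially on the precolored outer cycle to absorb the identifications.

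Third, ``local corrections near $x$'' is where essentially the entire difficulty resides. The vertex $x$ has charge $d(x)-4$ and by \ref{ch:xtof} must give $1$ to each incident inner face, so $x$ ends at $-3$, and there is no slack: every non-outer face and every vertex off the boundary cycle must end nonnegative. Making this work forces a long catalogue of reducible configurations within distance roughly $3$ of $x$ (Lemmas~\ref{tie-5-face} through~\ref{232523}) and then a taxonomy of face types (type-A-1/2/3, B, C, D, E, F, ``connected'', ``tied'') with ten discharging rules tailored to them. Your sketch correctly flags that this will be delicate, but it does not identify that a whole second layer of reducibility is needed to break the degree-$3$ chains emanating from $x$; without it the faces adjacent to $x$ end with negative charge and the count does not close.
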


Let us remark that in Theorem~\ref{X-girth5}, it does not suffice to forbid triangles: It is easy to see that any graph $G$
satisfying the outcome of the theorem has an independent set of size at least $\frac{2n+|X|}{6}$, implying that
for the graphs constructed by Jones~\cite{Jon84} (which have unbounded diameter), the outcome cannot be true for any set $X$ of size at least three.
It might be possible to improve the ratio of extra colors assigned to the vertex $x$ in Theorem~\ref{x-tri-free} a bit;
e.g., it could be true that there exists a coloring by subsets of $\{1,\ldots,9\}$ such that all vertices get at least three colors
and $x$ gets five.  However, when $G$ is the graph obtained from the wheel with five spokes by subdividing each of the spokes once,
$x$ is the center of the wheel, and $\varphi$ is a coloring by subsets of $\{1,\ldots,k\}$ and each vertex has
at least $\tfrac{k}{3}$ colors, then $|\varphi(x)|\le \tfrac{k}{3}+\tfrac{2k}{9}$.

Before proceeding with the proofs, let us mention another consequence of Theorem~\ref{x-tri-free}.  Consider a triangle-free planar graph $G$
with $V(G)=\{v_1,v_2,\ldots,v_n\}$. For each vertex $v_i\in V(G)$, the graph $G$ has a set coloring $\varphi_i$ by subsets of
$\{6i-5,6i-4,6i-3,6i-2,6i-1,6i\} $ such that $|\varphi(v_j)|\geq 2$ for $1\leq j\leq n$ and $|\varphi(v_i)|=3$.
Let us set $\varphi(v)=\bigcup_{i=1}^{n}\varphi_i(v)$ for each $v\in V(G)$.  Then $\varphi$ is a set coloring of $G$ by $(2n+1)$-element
subsets of $\{1,\ldots,6n\}$.  Hence, we have the following corollary, which improves upon Theorem~\ref{(9n:3n+1)}.

\begin{corollary}
\label{frac-tri-free}
Every triangle-free planar graph on $n$ vertices is $(6n:2n+1)$-colorable, and thus its fractional chromatic number is
at most $3-\frac{3}{2n+1}$.
\end{corollary}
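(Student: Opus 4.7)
The plan is to derive Corollary~\ref{frac-tri-free} directly from Theorem~\ref{x-tri-free} by a palette-union argument, so essentially no new combinatorial ideas are needed beyond the theorem itself. Enumerate $V(G)=\{v_1,\ldots,v_n\}$ and partition $\{1,\ldots,6n\}$ into $n$ pairwise disjoint size-$6$ palettes $P_1,\ldots,P_n$, with the obvious choice $P_i=\{6i-5,6i-4,\ldots,6i\}$. The idea is to run Theorem~\ref{x-tri-free} once for each vertex, each time on a fresh palette, and then glue the colorings together.

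For each $i\in\{1,\ldots,n\}$, I would apply Theorem~\ref{x-tri-free} to the triangle-free planar graph $G$ with distinguished vertex $x=v_i$, relabeling the palette $\{1,\ldots,6\}$ as $P_i$. This yields a set coloring $\varphi_i\colon V(G)\to 2^{P_i}$ with $|\varphi_i(v_i)|=3$ and $|\varphi_i(v_j)|\geq 2$ for every $j$. Define $\varphi(v)=\bigcup_{i=1}^{n}\varphi_i(v)$. Because the palettes are pairwise disjoint, whenever $u$ and $v$ are adjacent in $G$ we have $\varphi_i(u)\cap \varphi_i(v)=\emptyset$ for each $i$, and hence $\varphi(u)\cap \varphi(v)=\emptyset$. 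Thus $\varphi$ is a proper set coloring by subsets of $\{1,\ldots,6n\}$.

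To finish, observe that the same disjointness makes the sizes additive: $|\varphi(v_j)|=\sum_{i=1}^{n}|\varphi_i(v_j)|\geq 3+2(n-1)=2n+1$ for each $j$, with the term $3$ coming from the coloring $\varphi_j$ in which $v_j$ is the distinguished vertex. Shrinking each $\varphi(v_j)$ to exactly $2n+1$ elements gives a $(6n:2n+1)$-coloring of $G$, and the fractional chromatic number bound $\chi_f(G)\leq 6n/(2n+1)=3-3/(2n+1)$ follows at once from the definition. There is essentially no obstacle here beyond confirming that Theorem~\ref{x-tri-free} may be invoked with an arbitrary size-six palette, which is immediate by relabeling; all of the real work is encapsulated in Theorem~\ref{x-tri-free}, whose proof is the actual technical heart of the paper.
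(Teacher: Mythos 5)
Your argument is essentially identical to the paper's own derivation: the paper likewise applies Theorem~\ref{x-tri-free} once per vertex on the disjoint palettes $\{6i-5,\ldots,6i\}$ and takes the union of the resulting set colorings. The only cosmetic difference is that you explicitly note one should trim each set down to size exactly $2n+1$, which the paper leaves implicit.
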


\section{Set coloring of triangle-free planar graphs}

In this section, we give a proof of Theorem~\ref{x-tri-free}.
Let $G$ be a graph and let $X$ be a set of vertices of $G$.
An {\em $X$-enhanced coloring} of $G$ is a set coloring $\varphi$ of $G$ by subsets of $\{1,\ldots,6\}$
such that $|\varphi(v)|\geq 2$ for all $v\in V(G)$ and $|\varphi(x)|=3$ for $x\in X$.
We are going to prove a mild strengthening of Theorem~\ref{x-tri-free} where the outer face is precolored.

\begin{theorem}
\label{X-tri-free}
Let $G$ be a triangle-free plane graph whose outer face is bounded by a cycle $C$ of length at most 5,
and let $X$ be a subset of $V(C)$ of size at most one. Then any $X$-enhanced coloring of $C$ can be extended
to an $X$-enhanced coloring of $G$.
\end{theorem}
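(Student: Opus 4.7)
I would prove Theorem~\ref{X-tri-free} by contradiction, selecting a counterexample $G$ (together with an unextendable precoloring $\varphi_0$ of $C$) minimizing $|V(G)|+|E(G)|$. The first stage is standard minimality reductions. The graph $G$ is $2$-connected, since a cut vertex splits $G$ into smaller plane subgraphs each with outer face of length at most $5$, to which induction applies after suitably extending $\varphi_0$ through the cut. Next, $G$ has no separating cycle $D$ of length at most $5$: the subgraph bounded by $D$ is colored first by induction (treating $D$ as its outer face), and the complementary piece afterward. Every interior face has length $4$ or $5$, because any longer face admits a chord that neither creates a triangle nor a short separating cycle. Finally, no interior vertex $v$ has degree at most $2$: its neighbors forbid at most $4$ of the $6$ colors, so $v$ can be re-inserted after inductively coloring $G-v$ (with mild adjustments when a neighbor lies in $X$).

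The second stage refines this to a far more restrictive local structure. I would identify a catalogue of reducible configurations — typically clusters of adjacent $3$-vertices, $4$-faces all of whose vertices have degree $3$, short interior paths between boundary vertices, and certain low-degree neighbourhoods adjacent to $C$ — and show each is reducible by the standard delete-and-extend scheme: remove the configuration, apply induction to the smaller graph (whose outer face may need to be redefined but still has length at most $5$), and verify that the removed vertices can be recoloured last via a direct count of forbidden colour pairs. The generous ratio of $6$ colours against a demand of $2$ per vertex (and $3$ at the single possible enhanced vertex) makes many such local substructures reducible.

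With the reducible configurations excluded, I would finish by discharging. Assign charges to vertices, interior faces, and the outer face so that Euler's formula for triangle-free plane graphs forces a negative total, with only small-degree vertices and $4$-faces bearing the negative part. Choose local rules sending charge from long faces and higher-degree vertices to nearby $3$-vertices and $4$-faces, together with special rules around the precoloured cycle $C$ to absorb its large initial charge. A case analysis over the now structurally restricted $G$ would show every element ends with nonnegative charge, contradicting the negative total.

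The main obstacle will be the boundary analysis: both the outer cycle $C$ and the enhanced vertex $x \in X$ (when it exists) complicate reducibility, because deletions near $C$ can expose a new outer face of length $>5$, and the extra colour demanded at $x$ can derail otherwise easy greedy extensions. Choosing the right catalogue of reducible configurations — rich enough to drive the discharging while restricted enough that nearby boundary structure can be handled — together with tuning the discharging rules so that the inevitable positive charge on $C$ is dispersed without invoking configurations that touch $C$, is where the bulk of the technical work will live.
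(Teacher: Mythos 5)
Your high-level template (minimal counterexample, reducible configurations, discharging) is the same one the paper uses, but there are concrete gaps.

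First, your minimization criterion is inconsistent with your own reductions. You minimize $|V(G)|+|E(G)|$, but you then argue that every long face ``admits a chord that neither creates a triangle nor a short separating cycle,'' which is an edge \emph{addition} and would make $|V|+|E|$ \emph{larger}. The paper instead orders counterexamples by minimizing $|V(G)|$ first and then \emph{maximizing} $|E(G)|$, precisely so that adding chords (used both to rule out $6^+$-faces and to prove $2$-connectivity) is a legitimate move. With your criterion the $6^+$-face reduction simply does not go through; and your handwaved $2$-connectivity argument (``induction applies after suitably extending $\varphi_0$ through the cut'') runs into the problem that a hanging block need not have an outer face bounded by a $(\le\!5)$-cycle, which the paper again sidesteps by adding an edge.

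Second, you miss a key structural fact: in the minimal counterexample there are \emph{no} $4$-cycles other than $C$ at all (shown by identifying opposite vertices on a hypothetical $4$-face), hence every non-outer face is a $5$-face. Your plan retains $4$-faces in the discharging and treats clusters of them as reducible configurations, which is a genuinely different and substantially heavier structure than necessary; nothing in your sketch supplies the observations that eliminate them.

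Third, your claim ``no interior vertex has degree at most $2$'' is too strong. Degree-$2$ interior vertices adjacent to the enhanced vertex $x$ cannot be reduced by the greedy argument you give (a $3$-element set at $x$ plus a $2$-element set at the other neighbor can forbid $5$ colors). The paper's Lemma~\ref{basic} only shows that interior $2$-vertices must be adjacent to a vertex of $X$, and the resulting configurations near $x$ are exactly where the bulk of the technical difficulty lives: the paper proves several highly specific lemmas (\ref{233}, \ref{232325}, \ref{232424}, \ref{242324}, \ref{232523}) governing chains of degree-$2$ and degree-$3$ vertices around $x$, plus an auxiliary list-coloring lemma (\ref{spe-set}), and then eleven discharging rules (\ref{ch:fto3ver}--\ref{ch:vto5face}) carefully tuned to the vertex $x$ and the outer cycle. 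Your proposal leaves this entire core --- ``a catalogue of reducible configurations'' and ``local rules'' --- unspecified, so while the skeleton matches, the part that actually makes the proof work is not present.
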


Theorem~\ref{x-tri-free} follows from Theorem~\ref{X-tri-free} by redrawing the graph so that $x$
is incident with the outer face, adding three new vertices $v_1$, $v_2$, and $v_3$ and the edges
of the $4$-cycle $C=xv_1v_2v_3$ bounding the outer face of the resulting graph, letting $X=\{x\}$
and choosing an $X$-enhanced coloring of $C$ arbitrarily.

A (hypothetical) counterexample to Theorem~\ref{X-tri-free} is a triple
$(G,X,\varphi)$, where $G$ is a triangle-free plane graph whose outer face is
bounded by a cycle $C$ of length at most 5, $X$ is a subset of $V(C)$ with
$|X|\leq 1$, and $\varphi$ is an $X$-enhanced coloring of $C$ such that
$\varphi$ does not extend to an $X$-enhanced coloring of $G$.
The counterexample $(G,X,\varphi)$ is \emph{minimal} if there is no
counterexample $(G',X',\varphi')$ such that either $|V(G')|<|V(G)|$,
or $|V(G')|=|V(G)|$ and $|E(G')|>|E(G)|$; i.e., $G$ has the minimum number
of vertices among all counterexamples, and the maximum number of edges
among all counterexamples with the minimum number of vertices.

\subsection{Properties of a minimal counterexample}

Let us start with some observations on vertex degrees and face lengths in a minimal counterexample.

\begin{lemma}\label{basic}
If $(G, X, \varphi)$ is a minimal counterexample, then $G$ is 2-connected,
all vertices of degree two are incident with the outer face or adjacent to a vertex in $X$, and
every $(\leq\!5)$-cycle in $G$ bounds a face.
\end{lemma}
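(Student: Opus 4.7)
The plan is the standard minimal counterexample paradigm: each claimed property, if violated, will yield a reducible configuration contradicting either the vertex-minimality or the edge-maximality of $(G,X,\varphi)$. I would treat the three parts in the order (i) every $(\le 5)$-cycle bounds a face, (ii) $2$-connectivity, (iii) the degree-$2$ condition, so that face structure is available when reasoning about the other two parts. For (i), suppose a cycle $C'$ of length at most $5$ does not bound a face; then $C'$ has vertices strictly inside and strictly outside, so I split $G$ along $C'$ into the interior $G_1$ (together with $C'$) and the exterior $G_2$ (together with $C'$), with $C \subseteq G_2$. Apply minimality to $G_2$ to extend $\varphi$ there, then shrink the color sets on $V(C')$ so that at most one vertex retains three colors and the rest have exactly two; this produces an $X'$-enhanced coloring of $C'$ with $|X'|\le 1$. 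A second application of minimality, to $G_1$ with outer face $C'$, extends the coloring to $G_1$; combining yields an $X$-enhanced coloring of $G$ extending $\varphi$, contradicting the counterexample hypothesis.

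For (ii), I handle disconnection and cut vertices separately. If $G$ is disconnected, I extend $\varphi$ on the component $H\supseteq C$ by minimality; for every other component $H'$ I embed $H'$ with some vertex $v$ on the outer face, attach a new $4$-cycle $vw_1w_2w_3$ as the outer boundary, color that $4$-cycle by a fixed $\emptyset$-enhanced coloring such as $\{1,2\},\{3,4\},\{5,6\},\{3,4\}$, and apply minimality to $H' \cup \{w_1,w_2,w_3\}$; the vertex count works out because $|V(H)|\ge |V(C)|\ge 4$. If $G$ has a cut vertex $v$, I split $G = G_1 \cup G_2$ with $G_1\cap G_2=\{v\}$ and $C\subseteq G_1$, extend $\varphi$ on $G_1$ by minimality to pin down $\varphi(v)$, and attach a new $4$-cycle through $v$ to $G_2$ colored so as to agree with $\varphi(v)$ at $v$ and to be $X'$-enhanced with $X'=\{v\}$ or $X'=\emptyset$ according to whether $|\varphi(v)|$ is $3$ or $2$; that such $4$-cycle colorings always exist is a finite case check inside $\{1,\ldots,6\}$.

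For (iii), let $v$ be a degree-$2$ vertex that lies off the outer face and has no neighbor in $X$, with neighbors $u_1,u_2$. If $u_1,u_2$ share no common neighbor besides $v$, then $G+u_1u_2$ is still triangle-free and planar on the same vertex set, and any extension of $\varphi$ to $G+u_1u_2$ would restrict to an extension on $G$; hence $(G+u_1u_2,X,\varphi)$ is itself a counterexample with one more edge than $G$, contradicting edge-maximality. Otherwise $u_1,u_2$ share a neighbor $w\ne v$, and minimality applied to $G-v$ yields an extension $\varphi'$ in which $\varphi'(u_1)\cup\varphi'(u_2)\subseteq \{1,\ldots,6\}\setminus \varphi'(w)$ has size at most $6-|\varphi'(w)|\le 4$, leaving at least two colors to assign to $v$, which produces an extension of $\varphi$ to $G$ and hence a contradiction. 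The main obstacle I expect is the cut-vertex bookkeeping when one side is essentially just the outer cycle $C$, so that the naive attached-$4$-cycle reduction does not strictly shrink the vertex count; a small degenerate argument (for example, a shorter attachment or a direct ad hoc extension) will be needed there, and some care is also needed to verify that the shrinking step in (i) and the $4$-cycle colorings in (ii) can always be realized inside $\{1,\ldots,6\}$.
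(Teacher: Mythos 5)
There is a genuine error in your argument for part (iii). You claim that if $u_1,u_2$ share no common neighbor besides $v$, then $G+u_1u_2$ is still triangle-free; but $v$ itself is a common neighbor of $u_1$ and $u_2$, so $G+u_1u_2$ always contains the triangle $u_1vu_2$. To salvage the idea you would have to delete $v$ first and work with $(G-v)+u_1u_2$; but once you do that, the case split becomes superfluous. The paper's argument is one step: delete $v$, apply minimality to obtain an $X$-enhanced coloring $\psi$ of $G-v$, shrink so that $|\psi(u)|=2$ for every $u\notin X$, and note that if $\deg(v)\le 2$ and no neighbor of $v$ lies in $X$, the neighbors of $v$ jointly occupy at most four colors, leaving two for $v$. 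Hence $\deg(v)=2$ and a neighbor of $v$ is in $X$. This also rules out degree-$0$ and degree-$1$ vertices not on $C$, a fact quietly used in the $2$-connectivity step.

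Your part (ii) is a genuinely different route. You attach a new $4$-cycle gadget to the smaller side of the separation and reduce on vertex count; the paper instead keeps the vertex set fixed, picks two vertices $v_1\in V(G_1)$ and $v_2\in V(G_2)$ incident with a common face and at distance at least two from any cut vertex, adds the edge $v_1v_2$, and invokes the edge-maximality clause of minimality. This is cleaner: it introduces no new vertices, needs no finite check that a $4$-cycle coloring can realize a prescribed color set at the attachment vertex, and it avoids the degenerate bookkeeping you flag at the end (where the small side has only four vertices and the gadget does not strictly reduce the vertex count). Part (i) matches the paper's argument; the shrinking step you insert there is harmless but unnecessary, since an $X'$-enhanced precoloring only requires at least two colors on non-$X'$ vertices, so the restriction of $\varphi_1$ to $K$ is already a valid $(X\cap V(K))$-enhanced coloring without modification.
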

\begin{proof}
Let $v$ be a vertex of $G$ of degree at most two, not contained in the cycle $C$ bounding
the outer face of $G$.  Since $(G,X,\varphi)$ is a minimal counterexample, the coloring
$\varphi$ extends to an $X$-enhanced coloring $\psi$ of $G-v$.  Since $\psi$ does not
extend to an $X$-enhanced coloring of $G-v$, we conclude that $|\bigcup_{uv\in E(G)}\psi(u)|\ge 5$,
and thus $\deg(v)=2$ and $v$ is adjacent to a vertex in $X$.

Suppose now that $G$ is not 2-connected, and thus there exist proper induced subgraphs $G_1$ and $G_2$ of $G$
intersecting in at most one vertex such that $G=G_1\cup G_2$ and $C \subseteq G_1$.
Let $f$ be a face of $G$ incident with both a vertex of $V(G_1)\setminus V(G_2$
and a vertex of $V(G_2)\setminus V(G_1)$. Since $G$ is triangle-free and
has minimum degree at least two, observe that for $i\in \{1,2\}$, there exists a vertex
$v_i\in  V(G_i)$ incident with $f$ such that if $G_1$ and $G_2$ intersect,
then the distance between $v_i$ and the vertex in $G_1\cap G_2$ is at least two.
Then $G+v_1v_2$ is triangle-free and has more edges than $G$, and thus by the minimality
of $(G,X,\varphi)$, there exists an $X$-enhanced coloring of $G+v_1v_2$ extending $\varphi$.
This also gives an $X$-enhanced coloring of $G$, which is a contradiction.  Hence, $G$ is $2$-connected.

Suppose that a $(\leq\!5)$-cycle $K$ of $G$ does not bound a face. Since $G$ is
triangle-free, the cycle $K$ is induced. Let $G_1$ be the subgraph of $G$ drawn
outside (and including) $K$, and let $G_2$ be the subgraph of $G$ drawn inside
(and including) $K$. We have $V(G_1)<V(G)$, and thus there exists an $X$-enhanced coloring $\varphi_1$ of $G_1$ extending $\varphi$.
Furthermore, since $|V(G_2)|<|V(G)|$, there exists an $X$-enhanced coloring $\varphi_2$ of $G_2$ that matches $\varphi_1$ on $K$.
The union of $\varphi_1$ and  $\varphi_2$ is an $X$-enhanced coloring of $G$ extending $\varphi$, which is a contradiction. Hence,
every $(\leq\!5)$-cycle of $G$ bounds a face.
\end{proof}

\begin{lemma}
\label{4-face}
If $(G, X, \varphi)$ is a minimal counterexample with the outer face bounded
by a cycle $C$, then $G$ contains no 4-cycle other than $C$.
\end{lemma}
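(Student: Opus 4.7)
Suppose for contradiction that a minimal counterexample $(G,X,\varphi)$ contains a $4$-cycle $K=v_1v_2v_3v_4$ with $K\neq C$. By Lemma~\ref{basic}, $K$ bounds an internal face. My plan is to identify a suitable pair of opposite vertices of $K$ into a single new vertex $w$, forming a strictly smaller triangle-free plane graph $G'$ whose outer face is still a cycle of length $\leq 5$; then to apply minimality to get an $X$-enhanced coloring $\psi$ of $G'$ extending the induced precoloring; and finally to recover an $X$-enhanced coloring of $G$ by setting $\varphi^*(v_i)=\varphi^*(v_j):=\psi(w)$ on the identified pair (valid since opposite vertices of a $4$-cycle in the triangle-free graph $G$ are non-adjacent) and $\varphi^*:=\psi$ elsewhere, contradicting the choice of $(G,X,\varphi)$.

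The first step is a safety observation: at least one opposite pair can be identified without creating a triangle. If $v_1,v_3$ had a common neighbor $u\notin\{v_2,v_4\}$, the $4$-cycle $v_1v_2v_3u$ would bound a face by Lemma~\ref{basic}; then at $v_2$ the edges $v_2v_1,v_2v_3$ would be consecutive around $v_2$ on both the $K$-side and the new face's side, forcing $\deg_G(v_2)=2$, and symmetrically $\deg_G(v_4)=2$ from the cycle $v_1v_4v_3u$. If analogously $v_2,v_4$ shared a neighbor outside $\{v_1,v_3\}$, then $\deg_G(v_1)=\deg_G(v_3)=2$ as well, so $v_1$ would have only $v_2,v_4$ as its neighbors, contradicting the existence of $u$ as a third neighbor of $v_1$.

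Next, I choose the safe pair so that at most one of $v_i,v_j$ lies on $V(C)$. A preliminary remark is that $|V(K)\cap V(C)|\leq 3$: otherwise all four vertices of $K$ would lie on the $\leq 5$-cycle $C$, which either forces $K=C$ (when $|C|=4$) or, combined with the fact that every chord of a $5$-cycle closes a triangle with a cycle-vertex of the $5$-cycle, yields a triangle involving the fifth vertex of $C$. A short sub-case analysis on $|V(K)\cap V(C)|\in\{0,1,2,3\}$ combined with the safety step yields the desired pair: in the cases where no purely internal opposite pair is available, if the mixed pair failed safety then the degree-$2$ conclusion would force the internal vertex of that pair onto $C$, contradicting the case assumption.

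Now let $G'=G/\{v_i\equiv v_j\}$ (with any duplicate edges simplified), let $C'$ equal $C$ (with $v_i$ replaced by $w$ in the mixed case), and inherit $X',\varphi'$ via the relabeling $v_i\mapsto w$. Then $G'$ is a strictly smaller legal instance, so minimality yields an $X'$-enhanced coloring $\psi$ of $G'$ extending $\varphi'$, and $\varphi^*$ as defined above completes the contradiction. The main obstacle is the mixed sub-case where $v_j\notin V(C)$ has an additional neighbor $c\in V(C)\setminus\{v_i\}$: then $G'$ acquires a chord $wc$ incident to $V(C')$, and the existence of $\psi$ extending $\varphi'$ requires $\varphi(v_i)\cap\varphi(c)=\emptyset$ even though $v_i,c$ may not be adjacent on $C$. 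I plan to handle this by applying Lemma~\ref{basic} to the short cycle through $v_i,\ldots,v_j,c$ in $G$, showing that either $v_ic\in E(C)$ (so the required disjointness is automatic from the precoloring) or the induced structure allows a reduction on a $4$-face closer to $C$, yielding the contradiction by iteration.
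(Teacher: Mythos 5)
Your ``safety observation'' is aimed at the wrong hazard, and this leaves a genuine gap. When $v_1$ and $v_3$ are identified into a new vertex $w$, a \emph{common neighbor} $u\notin\{v_2,v_4\}$ produces a pair of parallel edges $wu$ (a digon), not a triangle; these can simply be simplified. A new triangle at $w$ arises precisely when $G$ contains a path $v_1ab\,v_3$ of length \emph{three} with $a,b\notin\{v_2,v_4\}$, for then $wab$ is a triangle in $G'$. Your argument correctly shows that at most one opposite pair has a common neighbor outside $K$ (and hence that identification can always be done without creating parallel edges), but it says nothing about length-three paths, so the claim that ``at least one opposite pair can be identified without creating a triangle'' is not established. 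Since your whole plan hinges on $G'$ being a valid (in particular, triangle-free) instance so that minimality produces the extension $\psi$, this is a real gap, not a cosmetic one.

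The paper sidesteps the issue entirely: it fixes $v_3\notin V(C)$, identifies $v_1$ with $v_3$, and observes that $\varphi$ cannot extend to $G'$ (since an extension would pull back to $G$). Minimality then forces $G'$ \emph{not} to be a valid instance, i.e., $G'$ must contain a triangle; that triangle yields a length-three path $v_1$--$v_3$ in $G$, hence two $5$-cycles $Q=v_1v_2v_3uw$ and $Q'=v_1v_4v_3uw$, which by Lemma~\ref{basic} both bound faces. Together with the face bounded by $K$ they tile the sphere, forcing $G=K\cup Q\cup Q'$ with exactly three faces, one of which is $C$; since $v_3$ lies on all three, $v_3\in V(C)$, contradiction. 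Thus a triangle in $G'$ is exploited rather than avoided. Separately, your ``chord'' worry in the mixed sub-case is a red herring: if $\varphi'$ fails to extend to $G'$ for \emph{any} reason, including a bad chord, then $(G',X,\varphi')$ is itself a counterexample with fewer vertices, contradicting minimality. There is no need to verify $\varphi(v_i)\cap\varphi(c)=\emptyset$ in advance, and the proposed ``reduction by iteration'' is unnecessary. You should either (a) replace the safety observation by the correct one about length-three paths (which can be proved by the same Euler-formula tiling argument the paper uses), or (b) drop the requirement that $G'$ be triangle-free and instead analyze the triangle that minimality forces to appear, as the paper does.
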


\begin{proof}
Suppose that $G$ contains a 4-cycle $K=v_1v_2v_3v_4$ distinct from $C$.  By Lemma~\ref{basic}, $K$ bounds
a face. Since $K\neq C$, we can assume that $v_3\notin V(C)$. Let $G'$ be the graph
obtained from $G$ by identifying $v_1$ with $v_3$. Note that each $X$-enhanced coloring of $G'$
corresponds to an $X$-enhanced coloring of $G$, and thus $\varphi$ does not extend to an $X$-enhanced coloring
of $G'$. Since $|V(G')|<|V(G)|$, we conclude by the minimality of $(G, X, \varphi)$ that $G'$ contains a triangle.
Hence, $G$ contains a 5-cycle $Q=v_1v_2v_3uw$. By Lemma~\ref{basic}, the $5$-cycles $Q$ and $Q'=v_1v_4v_3uw$ bound faces.
We conclude that $G$ has only three faces, bounded by the cycles $K$, $Q$, and $Q'$.  However, $v_3\in V(K\cap Q\cap Q')$,
but we chose $v_3$ not to be incident with the outer face of $G$, which is a contradiction.
\end{proof}

A \emph{$k$-face} is a face of length exactly $k$, and a \emph{$k$-vertex} is a vertex of degree exactly $k$.
A \emph{$k^+$-face} is a face of length at least $k$, and a \emph{$k^+$-vertex} is a vertex of degree at least $k$.

\begin{lemma}\label{6-face}
If $(G, X, \varphi)$ is a minimal counterexample, then $G$ contains no $6^+$-faces.
\end{lemma}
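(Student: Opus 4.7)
\textbf{Proof plan for Lemma~\ref{6-face}.}

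Suppose for contradiction that the minimal counterexample $(G,X,\varphi)$ has a face $f$ of length $\ell\ge 6$. Because $|V(C)|\le 5<\ell$ and $G$ is $2$-connected by Lemma~\ref{basic}, the boundary of $f$ is a cycle distinct from $C$ with more than $|V(C)|$ vertices. The plan is to identify two vertices at distance two along the boundary of $f$, obtain a strictly smaller triangle-free plane graph to which minimality applies, and lift the resulting coloring back to $G$.

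To set up the reduction I would first locate three consecutive vertices $u,v,w$ along the boundary of $f$ with $\{u,w\}\not\subseteq V(C)$: if every vertex on the boundary of $f$ had both of its $f$-neighbors on $C$, then iterating around that boundary would force $V(f)\subseteq V(C)$, contradicting $\ell>|V(C)|$. Let $G^\ast$ be the simple plane graph obtained from $G$ by identifying $u$ and $w$ into a single vertex $z$ (the resulting pair of edges from $v$ to $z$ is collapsed to a single edge). The embedding of $G$ induces an embedding of $G^\ast$ whose outer face is bounded by the cycle $C'$ obtained from $C$ by renaming the on-$C$ endpoint of $\{u,w\}$, if any, to $z$; defining $X'$ and $\varphi'$ by the same renaming yields a valid instance of Theorem~\ref{X-tri-free} with $|V(G^\ast)|=|V(G)|-1$.

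The central verification is that $G^\ast$ is triangle-free. Since $G$ is triangle-free and $uw\notin E(G)$ (otherwise $uvw$ would form a triangle), the only way identification could create a triangle is if $u$ and $w$ share a common neighbor $x\ne v$ in $G$; but then $uvwx$ is a $4$-cycle of $G$, and Lemma~\ref{4-face} forces it to equal $C$, contradicting $\{u,w\}\not\subseteq V(C)$. By minimality of $(G,X,\varphi)$, the precoloring $\varphi'$ extends to an $X'$-enhanced coloring $\psi^\ast$ of $G^\ast$. Setting $\psi(y)=\psi^\ast(y)$ for $y\notin\{u,w\}$ and $\psi(u)=\psi(w)=\psi^\ast(z)$ produces an $X$-enhanced coloring of $G$ extending $\varphi$, because every adjacency constraint of $G$ reduces to one in $G^\ast$ (using $uw\notin E(G)$) and the size-$3$ condition on $X$ is carried by $\psi^\ast$ through the renaming. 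This contradicts that $(G,X,\varphi)$ is a counterexample.

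The main obstacle is the first step: the condition $\{u,w\}\not\subseteq V(C)$ is exactly what guarantees both that identification preserves $C$ as a bounding cycle of the outer face and that the $4$-cycle witnessing a hypothetical common neighbor cannot coincide with $C$. The hypothesis $\ell\ge 6>|V(C)|$ is used precisely to make this choice possible; everything else is a routine use of Lemma~\ref{4-face} and the minimality hypothesis.
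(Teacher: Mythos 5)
There is a genuine gap in your triangle-freeness verification. Identifying $u$ and $w$ (at distance two through $v$) does \emph{not} create a triangle when $u$ and $w$ merely share a common neighbor $y$: that only produces a doubled edge $yz$, which you collapse, and no triangle results. What does create a triangle through $z$ is a path $uabw$ of length three in $G$ with $a,b\notin\{u,v,w\}$ and $ab\in E(G)$, for then $zab$ is a triangle in $G^\ast$. Together with the path $uvw$ this corresponds to a $5$-cycle $uvwba$ of $G$, not a $4$-cycle, so Lemma~\ref{4-face} is not the right tool. What is needed is Lemma~\ref{basic}: the $5$-cycle bounds a face, forcing $\deg(v)=2$, hence $v\notin V(C)$ (otherwise both neighbors $u,w$ of $v$ would lie on $C$). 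But then Lemma~\ref{basic} only yields that $v$ is adjacent to $X$, i.e.\ one of $u,w$ equals the vertex $x\in X\subseteq V(C)$ --- a situation that your choice $\{u,w\}\not\subseteq V(C)$ does not rule out, so the reduction can fail.

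The paper's proof instead takes consecutive boundary vertices $v_1,\dots,v_4$ with $v_1\notin V(C)$ and adds the edge $v_1v_4$ (first excluding $v_1v_4\in E(G)$ via Lemma~\ref{4-face}); a new triangle then comes from a common neighbor of $v_1,v_4$, giving a $5$-cycle $v_1v_2v_3v_4u'$ that bounds a face by Lemma~\ref{basic} and forces \emph{both} interior vertices $v_2,v_3$ to have degree two. The assumption $v_1\notin V(C)$ then gives $v_2,v_3\notin V(C)$ in turn, so the degree-two vertex $v_2$, whose only neighbors $v_1,v_3$ lie off $C$ and hence off $X$, contradicts Lemma~\ref{basic} directly. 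Your reduction has only the one interior vertex $v$, which does not give this leverage, and one cannot in general strengthen the choice to $u,w\notin V(C)$ for a face of length six or seven.
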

\begin{proof}
Suppose for a contradiction that $G$ contains a $6^+$-face bounded by a cycle $K=v_1\ldots v_k$, where $k\geq 6$.
Since the outer face of $G$ is bounded by a cycle $C$ of length at most five,
we can choose the labeling of vertices of $K$ so that $v_1\notin V(C)$.
By Lemma~\ref{4-face}, $v_1v_4\notin E(G)$.
Let $G'=G+v_1v_4$. If $G'$ contained a triangle, then $G$ would contain a 5-cycle $Q=v_1v_2v_3v_4u$,
which would bound a face by Lemma~\ref{basic}. Hence, the path $v_1v_2v_3v_4$
would be contained in boundaries of two distinct faces of $G$, and thus $v_2$ and $v_3$ would have degree two.
Since $v_1\not\in V(C)$, we would also have $v_2,v_3\not\in V(C)$, and thus $v_2$ would be a vertex of degree two
not contained in $C$ and not adjacent to $X$, contradicting Lemma~\ref{basic}.  Hence, $G'$ is triangle-free,
and $(G',X,\varphi)$ is a counterexample contradicting the minimality of $(G,X,\varphi)$.
\end{proof}

By Lemmas~\ref{4-face} and~\ref{6-face}, we have the following corollary.

\begin{corollary}
\label{5-face}
If  $(G, X, \varphi)$ is a minimal counterexample, then every face other than the outer one is a 5-face.
\end{corollary}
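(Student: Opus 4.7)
The plan is to simply combine the three preceding lemmas. By Lemma~\ref{basic}, the minimal counterexample $G$ is $2$-connected, so the boundary of every face (inner or outer) is a cycle. The outer face is bounded by a cycle $C$ of length at most five, so the claim concerns only inner faces. For any inner face $f$ bounded by a cycle $K$, I would argue that the length of $K$ lies in $\{3,4,5\}$ by combining triangle-freeness with Lemma~\ref{6-face}, which rules out lengths $\geq 6$.

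Next I would rule out length $3$: $G$ is triangle-free by hypothesis, so $|K|\neq 3$. For length $4$, I would invoke Lemma~\ref{4-face}, which says that $C$ is the only $4$-cycle in $G$; since $K$ bounds an inner face while $C$ bounds the outer face, $K\neq C$, so $|K|\neq 4$. The only remaining possibility is $|K|=5$, as desired.

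The only subtlety is making sure that the boundary of each inner face is indeed a cycle (not a more complicated closed walk), which is exactly where the $2$-connectedness guaranteed by Lemma~\ref{basic} is used. There is no genuine obstacle here — the corollary is essentially a bookkeeping step that assembles Lemmas~\ref{basic}, \ref{4-face}, and~\ref{6-face} into the structural statement that all internal faces of a minimal counterexample are $5$-faces, which will then be the starting point for a discharging argument in the remainder of the paper.
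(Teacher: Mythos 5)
Your proof is correct and matches the paper's approach, which simply cites Lemmas~\ref{4-face} and~\ref{6-face} (together with triangle-freeness) and leaves the $2$-connectedness point implicit. You've merely spelled out the routine details, including the observation that Lemma~\ref{basic} guarantees every face boundary is a cycle.
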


Next, we prove two claims restricting the $5$-faces.

\begin{lemma}
\label{tie-5-face}
Let $(G, X, \varphi)$ be a minimal counterexample with the outer face bounded
by a cycle $C$. Let $K=v_1v_2v_3v_4v_5$ be a cycle bounding a 5-face in
$G$ such that $v_1$, $v_2$, $v_3$ and $v_4$ have degree three and do not belong to $V(C)$.
For $i\in\{1,\ldots,4\}$, let $u_i$ denote the neighbor of $v_i$ not belonging to $V(K)$.
Then either $\{u_1,\ldots,u_4,v_5\}\cap X\neq \emptyset$ or $|\{u_1,\ldots, u_4,v_5\}\cap V(C)|\ge 2$.
\end{lemma}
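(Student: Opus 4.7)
I argue by contradiction: suppose $\{u_1,\ldots,u_4,v_5\}\cap X=\emptyset$ and at most one of $u_1,u_2,u_3,u_4,v_5$ lies in $V(C)$. My aim is to produce an $X$-enhanced coloring of $G$ extending $\varphi$, contradicting the minimality of $(G,X,\varphi)$.

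First I check that $u_1,u_2,u_3,u_4,v_5$ are pairwise distinct. The relations $u_i\ne v_5$ follow from the definition of $u_i$, and a coincidence $u_i=u_j$ with $1\le i<j\le 4$ produces, together with the path $v_iv_{i+1}\cdots v_j$, a cycle of length $j-i+2\le 5$ that is distinct from $C$ since $v_1,\ldots,v_4\notin V(C)$. For $j-i\in\{1,2\}$ this contradicts triangle-freeness of $G$ or Lemma~\ref{4-face}; for $j-i=3$ the resulting 5-cycle would by Lemma~\ref{basic} have to bound a face, but then the edge $v_2u_2$ at $v_2$ has no admissible face to border, a contradiction.

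Next, I construct a smaller triangle-free plane graph $G'$ by deleting $v_1,v_2,v_3,v_4$ and identifying the pair $u_2$ and $u_4$. The identification creates a triangle only if $G$ contains a length-$2$ or length-$3$ path between $u_2$ and $u_4$ avoiding $K$; Lemma~\ref{4-face} and Corollary~\ref{5-face} force any such path to traverse a 5-face neighboring $K$, and a short local case analysis should rule this out (or, if it fails, lead to trying the symmetric identification $u_1\sim u_3$; simultaneous failure yields an overly rigid configuration that itself contradicts Corollary~\ref{5-face}). The hypothesis that at most one of $u_1,\ldots,u_4,v_5$ lies on $V(C)$ guarantees the identification does not affect the precolored cycle~$C$, so $G'$ is a plane triangle-free graph whose outer face is still bounded by $C$.

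By minimality, $\varphi$ extends to an $X$-enhanced coloring $\psi'$ of $G'$, which lifts to a coloring $\psi$ of $G-\{v_1,v_2,v_3,v_4\}$ satisfying $\psi(u_2)=\psi(u_4)$. Let $L_1=\{1,\ldots,6\}\setminus(\psi(v_5)\cup\psi(u_1))$, $L_i=\{1,\ldots,6\}\setminus\psi(u_i)$ for $i\in\{2,3\}$, and $L_4=\{1,\ldots,6\}\setminus(\psi(v_5)\cup\psi(u_4))$. Since none of $v_5,u_1,\ldots,u_4$ is in $X$, I may take $|\psi(v_5)|=|\psi(u_i)|=2$, giving $|L_1|,|L_4|\ge 2$, $|L_2|=|L_3|=4$, and $L_4\subseteq L_2$. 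Using the non-adjacency of $v_2$ and $v_4$, I set $\varphi(v_2)=\varphi(v_4)$ equal to a 2-subset of $L_4$, chosen to avoid $L_1$ whenever possible; then $\varphi(v_3)$ is any 2-subset of $L_3\setminus\varphi(v_4)$ (size at least~$2$) and $\varphi(v_1)$ is any 2-subset of $L_1\setminus\varphi(v_2)$ (size at least~$2$ thanks to the avoidance). The residual case $L_1=L_4$ of size~$2$ forces $\psi(u_1)=\psi(u_2)$ and is handled symmetrically via $\varphi(v_1)=\varphi(v_3)$.

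The main obstacle I foresee is step~2: ensuring that one of the two candidate identifications preserves triangle-freeness. This boils down to enumerating short paths between $u_2$ and $u_4$ (or $u_1$ and $u_3$) outside $K$, which is controlled by the 5-face structure guaranteed by Corollary~\ref{5-face} together with Lemma~\ref{4-face}.
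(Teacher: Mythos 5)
The proposal follows the same high-level strategy as the paper — delete $v_1,\ldots,v_4$, shrink the resulting graph by an identification to invoke minimality, lift the $(6:2)$-coloring back — but the particular identification you chose ($u_2\sim u_4$, with no added edge) does not enforce enough structure on $\psi$ to guarantee the lift, and the triangle-freeness check you defer to ``a short local case analysis'' is in fact where the paper spends most of its effort.

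\textbf{The coloring step has a concrete gap.} After identifying $u_2$ with $u_4$ you only know $\psi(u_2)=\psi(u_4)$; the colors $\psi(u_1)$ and $\psi(u_4)$ can still overlap arbitrarily. Take $\psi(v_5)=\{1,2\}$, $\psi(u_1)=\{3,4\}$, $\psi(u_2)=\psi(u_4)=\{4,5\}$, $\psi(u_3)=\{4,5\}$, which is consistent with your $G'$. Then $L_1=\{5,6\}$, $L_2=L_3=\{1,2,3,6\}$, $L_4=\{3,6\}$, so $\varphi(v_1)=\{5,6\}$ and $\varphi(v_4)=\{3,6\}$ are forced, whence $\varphi(v_3)\subseteq\{1,2\}$ is forced, and then $\varphi(v_2)\subseteq\{1,2,3\}\setminus\{1,2\}=\{3\}$ has size $1$: no extension. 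Your ``chosen to avoid $L_1$ whenever possible'' does not help here since $|L_4\setminus L_1|=1$, and your residual case $L_1=L_4$ (which forces $\psi(u_1)=\psi(u_4)$) does not cover the overlap-one situation $|L_1\cap L_4|=1$, which is exactly what occurs when $|\psi(u_1)\cap\psi(u_4)|=1$. The paper avoids this by forming $G'$ from $G-\{v_1,v_2,v_3,v_4\}$ by identifying $u_2$ with $u_3$ \emph{and adding the edge $u_1u_4$}: the added edge forces $\psi(u_1)\cap\psi(u_4)=\emptyset$, so the lists for $v_1$ and $v_4$ together cover $\{1,\ldots,6\}\setminus\psi(v_5)$ and admit disjoint $2$-sets, while the identification $\psi(u_2)=\psi(u_3)$ makes the middle two vertices colorable once $\psi(v_1)$ and $\psi(v_4)$ are chosen disjoint. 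Your symmetric fallback $\varphi(v_1)=\varphi(v_3)$ has the same problem: $\varphi(v_3)$ must also avoid $\psi(u_3)$, which may clash with $L_1$.

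\textbf{The triangle-freeness check is not minor.} Unlike your plan, the paper needs \emph{two} auxiliary graphs. Its $G'$ (identify $u_2\sim u_3$, add $u_1u_4$) containing a triangle forces $u_1,u_4$ to have a common neighbor $w$. Then a second auxiliary graph $G''$ (identify $u_1\sim u_2$ and $u_3\sim v_5$) is formed; if that is triangle-free, the coloring extends, and otherwise a path $u_3xyv_5$ exists, which by planarity forces $x=w$, and a symmetric argument gives $w$ adjacent to $u_2$ as well. This pins the whole graph down to exactly six faces, contradicting the choice of $v_3\notin V(C)$. Nothing about this is a ``short local case analysis''; it is a genuine structural argument that falls back twice and ends in a global contradiction. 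Asserting that simultaneous failure of $u_2\sim u_4$ and $u_1\sim u_3$ would ``itself contradict Corollary~\ref{5-face}'' is a conjecture, not a proof, and in any case the identification you propose gives the wrong algebraic conclusion for the coloring step even when it does succeed.

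In short: the blueprint (pairwise distinctness/non-adjacency of the $u_i$, shrink and lift) is right and matches the paper, but the particular shrinkage is too weak and the reducibility analysis is not carried out; both gaps are substantive.
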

\begin{proof}
Suppose for a contradiction that $u_1,\ldots,u_4,v_5\not\in X$ and
at most one of the vertices $u_1$, \ldots, $u_4$, and $v_5$
belongs to $V(C)$.  If $u_i=u_j$ for distinct $i,j\in\{1,\ldots,4\}$, then $v_i$ and $v_j$
are contained in a triangle or a $4$-cycle.  The former is not possible, since $G$ is triangle-free.
In the latter case, Lemma~\ref{4-face} implies this $4$-cycle is $C$,
contradicting the assumption that $v_i\not\in V(C)$.  Therefore, the vertices $u_1$, \ldots, $u_4$ are
pairwise distinct.

Suppose that $G$ contains an edge $u_iu_j$ for distinct $i,j\in\{1,\ldots,4\}$.  Analogously to the previous paragraph,
this is not possible when $|i-j|=1$.  If $|i-j|=2$, then let $k=(i+j)/2$, otherwise (when $\{i,j\}=\{1,4\}$), let $k=5$.
Then $G$ contains a $5$-cycle $u_iv_iv_kv_ju_j$, and by Lemma~\ref{basic} this $5$-cycle bounds a face, implying that $v_k$
has degree two.  Since $v_i,v_j\not\in X$ and $|V(K)\cap V(C)|\le 1$, this contradicts Lemma~\ref{basic}.
Therefore, the vertices $u_1$, \ldots, $u_4$ are pairwise non-adjacent.

Next, we show that for $i\in\{1,2,3\}$, the graph obtained from $G-\{v_i,v_{i+1}\}$
by identifying $u_i$ and $u_{i+1}$ is triangle-free. Otherwise, $G$ contains a 6-cycle
$Q=v_iv_{i+1}u_{i+1}w_{i+1}w_iu_i$.  By Corollary~\ref{5-face}, since $\deg(v_i)=\deg(v_{i+1})=3$,
$G$ has a $5$-face bounded by a $5$-cycle $u_iv_iv_{i+1}u_{i+1}y_i$, and by Lemma~\ref{basic},
the $5$-cycle $u_iw_iw_{i+1}u_{i+1}y_i$ also bounds a face.  Consequently, $y_i$ has degree two,
and by Lemma~\ref{basic}, we conclude that either $y_i$ has a neighbor in $X$ or $y_i\in V(C)$.
However, then either $\{u_i,u_{i+1}\}\cap X\neq\emptyset$ or $u_i,u_{i+1}\in V(C)$, which is a contradiction.

Let $G'$ be the graph obtained from $G-\{v_1,v_2,v_3,v_4\}$ by adding the edge $u_1u_4$ and by
identifying $u_2$ with $u_3$.
If $G'$ is triangle-free, then by the minimality of $(G,X,\varphi)$,
there exists an $X$-enhanced coloring $\psi$ of $G'$ extending $\varphi$. Note
that $\psi(u_1)\cap \psi(u_4)=\emptyset$ and we can assume that $|\psi(u_1)|=\ldots=|\psi(u_4)|=|\psi(v_5)|=2$. Hence, we can let $\psi(v_1)$ be a 2-element subset of
$\{1,\ldots,6\}\setminus (\psi(u_1)\cup \psi(v_5))$ and $\psi(v_4)$ a 2-element subset of
$\{1,\ldots,6\}\setminus (\psi(u_4)\cup \psi(v_5))$
such that $\psi(v_1)\cap \psi(v_4)=\emptyset$. Since $\psi(u_2)=\psi(u_3)$,
$\psi$ can be extended to $v_2$ and $v_3$. This gives an $X$-enhanced coloring of $G$
extending $\varphi$, which is a contradiction.
So $G'$ has a triangle, necessarily containing the edge $u_1u_4$.  Since $u_1u_2,u_3u_4\notin E(G)$, the vertex
obtained by identifying $u_2$ with $u_3$ is not contained in the triangle.
Hence, $u_1$ and $u_4$ have a common neighbor $w$ in $G$.

Let $G''$ be the graph obtained from $G-\{v_1,v_2,v_3,v_4\}$ by identifying
$u_1$ with $u_2$, and $u_3$ with $v_5$. If $G''$ is triangle-free,
then there exists an $X$-enhanced coloring $\psi$ of $G'$ extending
$\varphi$ by the minimality of $(G,X,\varphi)$.
We can assume $|\psi(u_1)|=\ldots=|\psi(u_4)|=|\psi(v_5)|=2$,
and thus $\psi$ can be extended to $v_4$ and $v_3$.  Note that $\psi(v_5)=\psi(u_3)$, and thus $\psi(v_5)\cap
\psi(v_3)=\emptyset$, enabling us to extend $\psi$ to $v_1$ and $v_2$.  This gives an
$X$-enhanced coloring of $G$ extending $\varphi$, which is a contradiction.

Therefore, $G''$ has a triangle, necessarily containing the vertex obtained by the identification
of $u_3$ with $v_5$.  Since $u_1u_3,u_1v_5\notin E(G)$, we conclude that the triangle does not contain
the vertex obtained by the identification of $u_1$ with $u_2$, and thus
$G-\{v_1,v_2,v_3,v_4\}$ contains a path
$u_3xyv_5$.  Note that $u_1,u_4\not\in\{x,y\}$, since $G$ is triangle-free and $u_1u_3,u_3u_4\not\in E(G)$.
Since $v_1,v_4\not\in V(C)$, Lemma~\ref{4-face} implies $u_1y,u_4y\not\in E(G)$.
Since $w$ is a common neighbor of $u_1$ and $u_4$, by planarity we conclude that $w=x$, and thus $w$ is adjacent to $u_3$.
By a symmetric argument applied to the graph obtained from $G-\{v_1,v_2,v_3,v_4\}$ by identifying
$u_3$ with $u_4$, and $u_2$ with $v_5$, we conclude that $w$ is also adjacent to $u_2$.
However, then Lemma~\ref{basic} implies that $G$ has exactly $6$ faces, bounded by $K$,
$wyv_5v_1u_1$, $wyv_5v_4u_4$, and $wu_iv_iv_{i+1}u_{i+1}$ for $i\in \{1,2,3\}$.
One of these $5$-cycles is $C$, implying that $|\{u_1,\ldots, u_4,v_5\}\cap V(C)|\ge 2$,
which is a contradiction.
\end{proof}

\begin{corollary}
\label{x-tie-5-face}
Let $(G, X, \varphi)$ be a minimal counterexample with the outer face bounded by a cycle $C$ and with $X=\{x\}$.
Let $K=v_1v_2v_3v_4v_5$ be a cycle in $G$ vertex-disjoint from $C$ such that $\deg(v_5)=3$ and $xv_5\in E(G)$.
Then at least one of vertices $v_1$, \ldots, $v_4$ has degree at least four.
\end{corollary}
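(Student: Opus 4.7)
The plan is to proceed by contradiction: suppose each of $v_1, v_2, v_3, v_4$ has degree exactly three, and aim for a contradiction via Lemma~\ref{tie-5-face}. Because $K$ is a $5$-cycle disjoint from $C$, Lemma~\ref{basic} together with Corollary~\ref{5-face} ensures that $K$ bounds a $5$-face, so Lemma~\ref{tie-5-face} applies. Since $v_5 \notin V(C) \supseteq X$, its conclusion reduces to two cases: (a) some $u_i = x$ with $i \in \{1, 2, 3, 4\}$, or (b) at least two of $u_1, u_2, u_3, u_4$ belong to $V(C)$.

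Case (a) is handled directly by combining the edge $v_i x$ with $x v_5$ and the adjacencies along $K$: when $i \in \{1, 4\}$ one obtains a triangle $x v_5 v_i$, and when $i \in \{2, 3\}$ a $4$-cycle $x v_5 v_j v_i$ where $v_j \in \{v_1, v_4\}$ is the common $K$-neighbor of $v_5$ and $v_i$. By Lemma~\ref{4-face} this $4$-cycle would have to coincide with $C$, contradicting $v_5 \notin V(C)$.

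For case (b), set $S := V(C) \cap \{u_1, u_2, u_3, u_4\}$. The chord $v_5 x$ together with the chords $v_l u_l$ for $u_l \in S$ partitions the annular region between $K$ and $C$ into $|S| + 1$ subregions, each bounded by a cycle consisting of one arc of $K$, one arc of $C$, and two chords. Such a boundary cycle uses a chord not in $C$ and so differs from $C$; by Lemma~\ref{4-face} its length is at least $5$, and if its $K$-arc contains an intermediate vertex $v_l$ with $u_l \notin V(C)$, then the chord $v_l u_l$ lies in the interior of the subregion, so the cycle cannot bound a face and Lemma~\ref{basic} forces its length to be at least $6$. Summing the resulting lower bounds on the $C$-arc lengths against $\sum (\text{$C$-arc}) = |V(C)| \leq 5$ rules out $|S| \geq 3$ outright and also $|S| = 2$ with $(i, j) \in \{(1, 3), (2, 3), (2, 4)\}$; the subcase $(i, j) = (1, 4)$ is excluded because the bounds force $u_1 u_4 \in E(G)$, contradicting the pairwise non-adjacency of $u_1, \ldots, u_4$ established inside the proof of Lemma~\ref{tie-5-face}.

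The main obstacle will be the remaining symmetric subcases $(i, j) = (1, 2)$ and $(3, 4)$. In the former, the arc bounds force $|V(C)| = 5$ and $u_2 x \in E(G)$, producing the $5$-cycle $Z = v_5 v_1 v_2 u_2 x$, which by Lemma~\ref{basic} must bound a face. To derive a contradiction, I plan to examine the edge $v_1 v_2 \in Z$: the two faces of $G$ adjacent to $v_1 v_2$ are the $5$-face of $K$ (bounded by $K$) and, by Corollary~\ref{5-face}, a second $5$-face whose boundary is necessarily of the form $v_1 v_2 u_2 \star u_1$ for some common neighbor $\star$ of $u_1$ and $u_2$. Neither boundary coincides with $Z$, because $u_1 \neq v_5$ (as $u_1 \notin V(K)$) and $u_1 \neq x$ (ruled out in case (a)). This contradicts the requirement that $Z$ bound a face, and the subcase $(3, 4)$ follows by the same argument with the indices relabeled.
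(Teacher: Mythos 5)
Your proof is correct and follows the same overall skeleton as the paper's: invoke Lemma~\ref{tie-5-face} to get structural information about $\{u_1,\ldots,u_4\}$ versus $X$ and $V(C)$, rule out $u_i=x$ via the triangle/$4$-cycle argument, conclude that some $u_i\in V(C)$ must be adjacent to $x$, and then derive a contradiction from the resulting $5$-cycle through $x$ and $v_5$ forcing $v_1$ (or $v_4$) to have degree two. The one place where your route diverges is the middle step. The paper disposes of it in a single sentence: since $|V(C)|\le 5$, the vertices of $C$ not adjacent to $x$ number at most two and are mutually adjacent on $C$, so the two $u_i$'s in $V(C)$ (which you correctly note are pairwise non-adjacent, by the argument inside Lemma~\ref{tie-5-face}) cannot both avoid the neighbourhood of $x$. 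You instead run a careful arc-length/subregion count over the annulus between $K$ and $C$, which is more work but gives sharper information (it even pins down $(i,j)\in\{(1,2),(3,4)\}$ and $|C|=5$, whereas the paper allows the extra case $u_1\in V(C)$ adjacent to $x$ and kills it separately via a $4$-cycle). Your final step — arguing that the $5$-cycle $Z=v_5v_1v_2u_2x$ cannot bound a face because both faces incident with $v_1v_2$ already have boundaries distinct from $Z$ — is a correct reformulation of the paper's ``bounds a face, hence $\deg(v_1)=2$'' observation. In short: same approach, with a heavier but valid argument where the paper uses a one-line non-adjacency observation.
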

\begin{proof}
By Lemma~\ref{basic}, $K$ bounds a face.  For $i\in \{1,\ldots, 4\}$, Lemma~\ref{4-face} and the assumption
that $G$ is triangle-free implies $xv_i\not\in E(G)$, and thus $\deg(v_i)\ge 3$ by Lemma~\ref{basic}.
Suppose for a contradiction that $\deg(v_i)=3$ for $i\in\{1,\ldots,4\}$.
Let $u_i$ denote the neighbor of $v_i$ not in $V(K)$.
As in the proof of Lemma~\ref{tie-5-face}, we argue that the vertices $u_1$, \ldots, $u_4$
are pairwise disjoint and non-adjacent.

By Lemma~\ref{tie-5-face}, two of the vertices $u_1$, \ldots, $u_4$ belong to $V(C)$. Consequently, at least one of them
is adjacent to $x$. By symmetry, we can assume that there exists $i\in\{1,2\}$
such that $u_i\in V(C)$ and $u_i$ is adjacent to $x$.  If $i=1$, then Lemma~\ref{4-face}
applied to the $4$-cycle $u_1v_1v_5x$ implies $v_1\in V(C)$, which is a contradiction.  If $i=2$,
then the $5$-cycle $u_2v_2v_1v_5x$ bounds a face by Lemma~\ref{basic}, and thus $\deg(v_1)=2$, which is again a contradiction.
\end{proof}

\subsection{Reducible configurations}

Let us now derive further properties of special configurations in a minimal counterexample.
We will often need the following observation.
\begin{observation}\label{int123}
If $\psi$ is an $\{x\}$-enhanced coloring of a path $xuv$, then
$\psi(x)\cap \psi(v)\neq\emptyset$.  Conversely, any precoloring $\psi'$ of $x$ and $v$
such that $|\psi'(x)|=3$, $|\psi'(v)|=2$, and $\psi'(x)\cap \psi'(v)\neq\emptyset$ extends
to an $\{x\}$-enhanced coloring of the path.
\end{observation}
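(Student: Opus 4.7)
The plan is to handle both directions by an elementary cardinality argument inside the $6$-element palette $\{1,\ldots,6\}$.

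For the forward direction, I would fix an $\{x\}$-enhanced coloring $\psi$ of the path $xuv$ and argue by contradiction. Since $|\psi(x)|=3$ and $\psi(x)\cap\psi(u)=\emptyset$, the set $\psi(u)$ lies in the $3$-element complement $\{1,\ldots,6\}\setminus\psi(x)$, so $|\psi(u)|\in\{2,3\}$. If $\psi(x)\cap\psi(v)$ were empty, then $\psi(v)$ would be contained in $\{1,\ldots,6\}\setminus(\psi(x)\cup\psi(u))$, a set of size at most $6-|\psi(x)|-|\psi(u)|\le 6-3-2=1$, contradicting $|\psi(v)|\geq 2$.

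For the converse, I would take any precoloring $\psi'$ of $x$ and $v$ with $|\psi'(x)|=3$, $|\psi'(v)|=2$, and $\psi'(x)\cap\psi'(v)\neq\emptyset$, and exhibit a legal choice of $\psi(u)$. Inclusion--exclusion gives $|\psi'(x)\cup\psi'(v)|=|\psi'(x)|+|\psi'(v)|-|\psi'(x)\cap\psi'(v)|\le 3+2-1=4$, so the complement $\{1,\ldots,6\}\setminus(\psi'(x)\cup\psi'(v))$ has at least two elements. Choosing $\psi(u)$ to be any $2$-element subset of this complement yields $\psi(u)\cap\psi'(x)=\psi(u)\cap\psi'(v)=\emptyset$ and $|\psi(u)|=2$, which together with $\psi'$ is an $\{x\}$-enhanced coloring of the path.

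There is no real obstacle here: the only thing to be careful about is tracking the exact sizes, in particular the fact that $|\psi(u)|\geq 2$ (rather than $=2$) is precisely what forces the intersection $\psi(x)\cap\psi(v)$ to be nonempty in the forward direction, and that a single shared element in $\psi'(x)\cap\psi'(v)$ is enough to free up the two slots needed for $\psi(u)$ in the converse direction.
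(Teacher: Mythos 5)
Your proof is correct, and the cardinality argument you give (disjointness forcing $|\psi(x)\cup\psi(u)|\geq 5$ in one direction, and inclusion--exclusion leaving at least two free colors for $u$ in the other) is exactly the elementary counting that the paper leaves implicit by stating this as an Observation with no written proof. Nothing to add.
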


Next, we restrict degrees of vertices near to $X$.

\begin{lemma}
\label{233}
Let $(G, X, \varphi)$ be a minimal counterexample with the outer face bounded by a cycle $C$ and with $X=\{x\}$.
Suppose a cycle $xv_1u_1u_2v_2$ bounds a 5-face in $G$.
If $\deg(v_1)=2$, $\deg(u_1)=3$, and $v_1,u_1,u_2\not\in V(C)$, then $\deg(v_2)=2$ and $v_2\not\in V(C)$.
\end{lemma}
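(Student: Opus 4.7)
Suppose for contradiction that $\deg(v_2)\ge 3$ or $v_2\in V(C)$, and let $w$ denote the neighbor of $u_1$ off the given 5-face, so that $N_G(u_1)=\{v_1,u_2,w\}$. By Corollary~\ref{5-face}, the interior face of $G$ on the other side of the path $v_1u_1$ is also a 5-face; inspecting its boundary shows it has the form $xv_1u_1w\alpha$ for some common neighbor $\alpha$ of $x$ and $w$. A routine application of Lemma~\ref{4-face} to potential 4-cycles through $u_1$ shows that $u_2$ and $w$ are distinct, non-adjacent, and share only $u_1$ as a common neighbor in $G$: any other common neighbor or the edge $u_2w$ itself would produce a 4-cycle through $u_1$, which by Lemma~\ref{4-face} must equal $C$, contradicting $u_1,u_2\notin V(C)$.

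The reduction is to consider the auxiliary graph $G':=(G-\{v_1,u_1\})+u_2w$. The verifications that $G'$ is triangle-free (from the common-neighbor observation), plane (since $u_2$ and $w$ both lie on the boundary of the face obtained by merging the three 5-faces at $u_1$ after the deletions), retains $C$ as its outer face, and satisfies $|V(G')|<|V(G)|$ are all routine. By minimality of $(G,X,\varphi)$, there is an $\{x\}$-enhanced coloring $\psi'$ of $G'$ extending $\varphi$. From the new edge $u_2w$ we obtain $\psi'(u_2)\cap\psi'(w)=\emptyset$, while Observation~\ref{int123} applied to the paths $xv_2u_2$ and $x\alpha w$ in $G'$ yields $\psi'(x)\cap\psi'(u_2)\ne\emptyset$ and $\psi'(x)\cap\psi'(w)\ne\emptyset$. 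Combined, these force $|\psi'(x)\cap(\psi'(u_2)\cup\psi'(w))|\ge 2$.

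Assuming without loss of generality $|\psi'(u_2)|=|\psi'(w)|=2$, I extend $\psi'$ to an $\{x\}$-enhanced coloring $\psi$ of $G$ by setting $\psi(u_1):=\{1,\ldots,6\}\setminus(\psi'(u_2)\cup\psi'(w))$ (a 2-element set) and choosing $\psi(v_1)$ to be any 2-subset of $\{1,\ldots,6\}\setminus(\psi'(x)\cup\psi(u_1))$. The extension is valid precisely when $\psi'(x)\not\subseteq\psi'(u_2)\cup\psi'(w)$, equivalently when the intersection above has size exactly $2$; producing such a coloring contradicts the assumption that $(G,X,\varphi)$ is a counterexample.

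\textbf{The main obstacle} is to rule out the residual subcase $\psi'(x)\subseteq\psi'(u_2)\cup\psi'(w)$ (intersection of size $3$), in which the chosen $\psi(u_1)$ misses $\psi'(x)$ entirely, leaving only one free color for $v_1$. A cleaner identification $u_2\sim w$ instead of adding the edge would immediately dispose of this case, but it is unavailable: Corollary~\ref{5-face} guarantees a 5-face $u_1u_2\delta\gamma w$ opposite $u_1$, and the path $u_2\delta\gamma w$ would create a triangle $u_2^*\delta\gamma$ upon identification. So the bad subcase must be handled via the assumption on $v_2$: the extra neighbor of $v_2$ (when $\deg(v_2)\ge 3$) or the fixed value $\varphi(v_2)$ on $C$ (when $v_2\in V(C)$) imposes further constraints on $\psi'(v_2)$ and, through the edge $v_2u_2$, on $\psi'(u_2)$, which should be shown incompatible with $\psi'(x)\subseteq\psi'(u_2)\cup\psi'(w)$ by a case analysis on the color pattern (or by a further small modification of the reduction, for instance also removing $v_2$ and attaching short auxiliary paths that enforce additional intersections via Observation~\ref{int123}). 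This case analysis is the technical heart of the lemma.
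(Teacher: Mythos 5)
Your reduction is genuinely different from the paper's, and the gap you flag at the end is not a cosmetic loose end: it is a fundamental difficulty that your choice of auxiliary graph creates and that the paper's choice avoids. The paper forms $G'$ from $G-\{v_1,u_1\}$ by \emph{identifying} $x$ with $u_2$, not by adding the edge $u_2w$. After the identification, the coloring $\psi'$ it obtains necessarily satisfies $\psi'(u_2)=\psi'(x)=\{1,2,3\}$, and when extending back to $G$ one is free to shrink $\psi(u_2)$ to any $2$-element subset of $\{1,2,3\}$. This slack, together with Observation~\ref{int123} applied to the other neighbor $u_0$ of $u_1$ (so that, up to symmetry, $1\in\psi'(u_0)$ and $3\notin\psi'(u_0)$), is exactly what lets the paper choose $\psi(u_2)=\{1,2\}$, then $\psi(u_1)\subseteq\{3,\ldots,6\}\setminus\psi'(u_0)$ containing $3$, and finally $\psi(v_1)$. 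None of that slack is present in your $G'$, where $\psi'(u_2)$ is already a fixed $2$-set disjoint from $\psi'(w)$, and the ``bad'' subcase $\psi'(x)\subseteq\psi'(u_2)\cup\psi'(w)$ forces $\psi(u_1)=\{1,\ldots,6\}\setminus(\psi'(u_2)\cup\psi'(w))$ to be disjoint from $\psi'(x)$, leaving at most one color for $v_1$, as you observe. This subcase can genuinely occur for a triangle-free $G'$ (e.g.\ $\psi'(x)=\{1,2,3\}$, $\psi'(u_2)=\{1,2\}$, $\psi'(w)=\{3,4\}$), and nothing in your argument rules it out.

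There is a second, structural divergence that makes the gap harder to close than you suggest. In your reduction $G'$ is always triangle-free (your common-neighbor observation is correct), so you always obtain a coloring $\psi'$ and must always produce an extension. But the conclusion of the lemma, $\deg(v_2)=2$ and $v_2\notin V(C)$, is not supposed to follow from ``the coloring always extends''; in the paper it is derived from the complementary branch, namely when the identified graph \emph{does} contain a triangle: that triangle certifies a path $u_2w_2w_1$ from $u_2$ to $x$, hence a $5$-cycle $xv_2u_2w_2w_1$ in $G$, which by Lemma~\ref{basic} bounds a face and forces $\deg(v_2)=2$. Your reduction has no analogous ``triangle'' branch, so the information about $v_2$ has to enter through the contradiction hypothesis ($\deg(v_2)\ge 3$ or $v_2\in V(C)$), and you have not shown how. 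The auxiliary constraints you mention (extra neighbor of $v_2$, fixed $\varphi(v_2)$ on $C$) constrain $\psi'(v_2)\subseteq\{4,5,6\}$ but do not obviously rule out $\psi'(x)\subseteq\psi'(u_2)\cup\psi'(w)$. In short: the strategy you chose has a real hole, it is not a routine case analysis, and the fix is not a patch but a different reduction (identification rather than edge addition), which is what the paper does.
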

\begin{proof}
Let $G'$ be the graph obtained from $G-\{v_1,u_1\}$ by identifying $x$ and $u_2$.
Suppose first that $G'$ has an $X$-enhanced coloring $\psi'$ extending $\varphi$.
We may assume without lose of generality that $\psi'(x)=\{1,2,3\}$.
Let $u_0$ denote the neighbor of $u_1$ distinct from $v_1$ and $u_2$.
By Corollary~\ref{5-face}, $u_0$ has a common neighbor with $x$, and by Observation~\ref{int123},
we can without lose of generality assume $1\in\psi'(u_0)$.
Furthermore, by symmetry between the colors $2$ and $3$, we can assume $3\not\in \psi'(u_0)$.
Let $\psi(v)=\psi'(v)$ for $v\in V(G)\setminus \{v_1,u_1,u_2\}$.
Let $\psi(u_2)=\{1,2\}$ (this is a subset of $\psi'(u_2)=\psi'(x)=\{1,2,3\}$),
let $\psi(u_1)$ be a $2$-element subset of $\{3,\ldots,6\}\setminus \psi'(u_0)$ containing $3$,
and extend $\psi$ to $v_1$ by Observation~\ref{int123}.
Then $\psi$ is an $X$-enhanced coloring of $G$ extending $\varphi$, which is a contradiction.

Consequently, $G'$ does not have an $X$-enhanced coloring extending $\varphi$,
and by the minimality of $(G,X,\varphi)$, we conclude $G'$ contains a triangle.
Hence, $G$ contains a 5-cycle $xv_2u_2w_2w_1$ disjoint from $\{u_1,v_1\}$, and by Lemma~\ref{basic}, this $5$-cycle
bounds a face.  Hence, $v_2$ has degree two.  Since $u_2\not\in V(C)$, we conclude $v_2\not\in V(C)$.
\end{proof}

\begin{lemma}
\label{232325}
Let $(G, X, \varphi)$ be a minimal counterexample with the outer face bounded by a cycle $C$ and with $X=\{x\}$.  Let $xv_1u_1u_2v_2$ and $xv_2u_2u_3v_3$ be distinct cycles bounding 5-faces in
$G$ such that $u_1,u_2,u_3\not\in V(C)$.  If $\deg(u_1)=\deg(u_2)=3$, then $\deg(u_3)\geq 5$.
\end{lemma}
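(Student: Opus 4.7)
I would argue by contradiction, assuming $\deg(u_3)\le 4$, and derive a smaller counterexample violating the minimality of $(G,X,\varphi)$.

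First, I would pin down the local structure. By Corollary~\ref{5-face}, the third face at $u_2$ (the one between the edges $u_1u_2$ and $u_2u_3$) is a $5$-face, so there is a $5$-cycle $u_1u_2u_3aw_1$ bounding this face, where $w_1$ is the third neighbor of $u_1$ and $a$ is a neighbor of $u_3$ distinct from $u_2$ and $v_3$. Using triangle-freeness together with Lemma~\ref{4-face}, I would check that $w_1\neq a$ and rule out short chords: in particular, $v_1v_3, v_1a, v_2a, v_2w_1, w_1v_3, w_1u_3, v_1u_3, v_2u_3\notin E(G)$, since each such edge would either create a triangle or force a $4$-cycle distinct from $C$. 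I would also note that $v_1,v_2,v_3,w_1,a$ are pairwise distinct, using planarity and the fact that two distinct faces share at most an edge.

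The candidate reduction is $G'=G-\{u_1,u_2\}$, which is triangle-free (as a subgraph) and has $|V(G')|=|V(G)|-2$. By minimality, $\varphi$ extends to an $X$-enhanced coloring $\psi$ of $G'$; assume WLOG that $|\psi(v)|=2$ for all $v\ne x$. To extend $\psi$ to $G$, I need $2$-element color sets $A_1,A_2$ for $u_1,u_2$ with $A_1\subseteq S_1:=\{1,\ldots,6\}\setminus(\psi(v_1)\cup\psi(w_1))$, $A_2\subseteq S_2:=\{1,\ldots,6\}\setminus(\psi(v_2)\cup\psi(u_3))$, and $A_1\cap A_2=\emptyset$. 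Since $|S_1|,|S_2|\ge 2$, the only obstruction is a ``bad overlap'' between $S_1$ and $S_2$ (e.g., $S_1=S_2$ of size $2$, or $|S_i|=2\subseteq S_j$ of size $3$). I would show that in all but the bad overlap case, one can directly pick $A_1,A_2$.

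The remaining, and main, obstacle is the bad overlap case. Here I plan to exploit the fact that $\deg(u_3)\le 4$ to gain flexibility in $\psi(u_3)$: in $G-u_2$ the vertex $u_3$ has at most three neighbors ($v_3$, $a$, and at most one more), so after fixing $\psi$ on $V(G')\setminus\{u_3\}$ we can usually reselect $\psi(u_3)$ among several admissible $2$-sets. I would show that this flexibility lets us perturb $\psi(u_3)$ to change $S_2$ and destroy the bad overlap with $S_1$. If even this perturbation is blocked, the blockage forces $\psi(v_3)\cup\psi(a)$ (and possibly $\psi(b)$ where $b$ is the fourth neighbor of $u_3$) to align rigidly with $\psi(v_1)\cup\psi(w_1)$; I would then switch to a refined reduction, namely $G'':=(G-\{u_1,u_2,v_3\})/(x\sim u_3)$, whose triangle-freeness follows from the non-adjacencies established in the first step, and whose minimality extension supplies a $3$-element set at the identified vertex (i.e., at both $x$ and $u_3$). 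By Observation~\ref{int123} applied along $x\,v_i\,u_i$ for $i\in\{1,2\}$, this $3$-set intersects each $\psi(v_i)$, enforcing enough structure to color $u_1$, $u_2$, and $v_3$ compatibly; the resulting extension of $\varphi$ to $G$ contradicts $(G,X,\varphi)$ being a counterexample.

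The hard part will be the bookkeeping in this last step: verifying that the identification/deletion producing $G''$ does not create a triangle (this is where the non-adjacencies from the first step are essential) and that, in the bad overlap regime, the $3$-set $\psi(x)=\psi(u_3)$ from $G''$ leaves room to choose $A_1,A_2,\psi(v_3)$ simultaneously. The case $\deg(u_3)=4$ is slightly more delicate than $\deg(u_3)=3$, since the extra neighbor $b$ of $u_3$ contributes to the constraints on $\psi(u_3)$, but the same two-step reduction plan should cover both.
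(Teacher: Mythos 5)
Your plan has a concrete gap in the backup reduction, and this gap cannot be patched without a substantially different identification. You propose $G''=(G-\{u_1,u_2,v_3\})/(x\sim u_3)$ and claim triangle-freeness follows from the chord exclusions in your first step. But consider the $5$-face incident with the edge $xv_3$ on the other side from $xv_2u_2u_3v_3$; by Corollary~\ref{5-face} it is bounded by a $5$-cycle $xv_3u_3u_4v_4$, where $u_4$ is a neighbor of $u_3$ distinct from $u_2,v_3$ (this is your $a$ when $\deg(u_3)=3$, or the fourth neighbor $b$ when $\deg(u_3)=4$), and $v_4$ is a neighbor of $x$. One checks $v_4\notin\{u_1,u_2,v_3\}$, so the path $x\,v_4\,u_4\,u_3$ survives in $G-\{u_1,u_2,v_3\}$, and identifying $x$ with $u_3$ collapses it to a triangle. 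So $G''$ is not admissible and the minimality argument does not apply; your ``bad overlap'' regime is left unresolved. (As a secondary point, your first reduction $G-\{u_1,u_2\}$ also forces you to colour two interacting vertices $u_1,u_2$ simultaneously, which makes the ``non-bad'' case messier than the short argument you sketch; e.g., when $|S_1|=|S_2|=2$ and $|S_1\cap S_2|=1$, disjoint choices already fail.)

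The paper takes a cleaner route. It deletes only $\{v_2,u_2\}$, keeping $u_1$ coloured, and observes via Observation~\ref{int123} that the only obstruction to recolouring $u_2,v_2$ is $\{1,2,3\}\subseteq\psi'(u_1)\cup\psi'(u_3)$; in that case it recolours $u_1,v_1$ together with $u_2,v_2$, using the neighbouring precoloured $5$-face at $u_1$ (through $u_0$, the neighbour of $u_1$ at distance two from $x$) and the symmetry of free colours. For $\deg(u_3)=4$, instead of an identification through $x$, it identifies $u_1$ with the fourth neighbour $w$ of $u_3$; triangle-freeness of this graph is secured because the $5$-face structure forces $wu_0\in E(G)$, and Lemma~\ref{basic} then rules out a short path between $u_1$ and $w$. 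The identification yields the constraint $\psi'(u_1)=\psi'(w)$, which is exactly what the subsequent case analysis exploits. If you want to keep your overall framework, you would need to replace your identification $x\sim u_3$ by one that demonstrably avoids creating triangles; the paper's choice $u_1\sim w$ is the natural candidate.
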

\begin{proof}
Note that $\deg(v_2)=2$ and $v_2\not\in V(C)$.  By Lemma~\ref{233}, we have $\deg(v_1)=\deg(v_3)=2$ and $v_1,v_3\not\in V(C)$.
By Lemma~\ref{basic}, $\deg(u_3)\geq 3$. Suppose for a contradiction that
$\deg(u_3)\leq 4$.  By Corollary~\ref{5-face}, there exist paths $u_1u_0v_0x$ and $u_3u_4v_4x$
in $G$ with $u_0\neq u_2\neq u_4$.

First consider the case $\deg(u_3)=3$.
By the minimality of $(G, X, \varphi)$, the graph $G'=G-\{v_2,u_2\}$ has an $X$-enhanced coloring $\psi'$ extending $\varphi$.
We may assume without lose of generality that $\psi'(x)=\{1,2,3\}$. If $|(\psi'(u_1)\cup \psi'(u_3))\cap \{1,2,3\}|\leq 2$,
then $\psi'$ can be extended to $u_2$ and $v_2$ by Observation~\ref{int123}. This gives an $X$-enhanced coloring of $G$ extending $\varphi$, which is a contradiction.
Therefore, we can assume $\psi'(u_1)=\{1,2\}$ and $3\in \psi'(u_3)$.
By Observation~\ref{int123}, we can assume $\psi'(u_0)=\{3,4\}$.
Furthermore, by symmetry between colors $1$ and $2$, and between colors $5$ and $6$, we can assume $1,6\notin \psi'(u_3)$.
Let $\psi(v)=\psi'(v)$ for $v\in V(G)\setminus \{v_1,u_1,v_2,u_2\}$.  Set $\psi(u_1)=\{2,5\}$, $\psi(v_1)=\{4,6\}$,
$\psi(u_2)=\{1,6\}$ and $\psi(v_2)=\{4,5\}$.
Then $\psi$ is an $X$-enhanced coloring of $G$ extending $\varphi$, which is a contradiction.

Now we assume $\deg(u_3)=4$.  Let $w$ be the neighbor of $u_3$ distinct from $u_2$, $v_3$, and $u_4$.
By Lemma~\ref{4-face}, $xw\notin E(G)$, and in particular $w\neq v_0$.  By Corollary~\ref{5-face}, we have $wu_0\in E(G)$,
and in particular $\deg(u_0)\ge 3$.
Let $G'$ be the graph obtained from $G-\{v_2,u_2\}$ by identifying $u_1$ and $w$.
By Lemma~\ref{basic}, since $\deg(u_0)\ge 3$, $G-u_2$ does not contain a path of length three between $u_1$ and $w$,
and thus $G'$ is triangle-free.
By the minimality of $(G, X, \varphi)$, there exists an $X$-enhanced coloring $\psi'$ of $G'$ extending $\varphi$.
We may assume without lose of generality that $\psi'(x)=\{1,2,3\}$.
If $|(\psi'(u_1)\cup \psi'(u_3))\cap \{1,2,3\}|\leq 2$, then $\psi'$ extends to $u_2$ and $v_2$ by Observation~\ref{int123}.
This gives an $X$-enhanced coloring of $G$ extending $\varphi$, which is a contradiction.
Therefore $\{1,2,3\}\subseteq \psi'(u_1)\cup \psi'(u_3)$.

If $|\psi'(u_3)\cap \{1,2,3\}|=2$, we can assume $\psi'(u_3)=\{1,2\}$ and $3\in \psi'(u_1)=\psi'(w)$.
By Observation~\ref{int123}, we can also assume $\psi'(u_4)=\{3,4\}$.
By symmetry between the colors $1$ and $2$, and between the colors $5$ and $6$, we can assume $1,6\not\in \psi'(u_1)=\psi'(w)$.
Let $\psi(v)=\psi'(v)$ for $v\in V(G)\setminus \{v_2,u_2,v_3,u_3\}$,
$\psi(u_3)=\{2,6\}$, $\psi(v_3)=\{4,5\}$, $\psi(u_2)=\{1,\alpha\}$ for a color $\alpha\in\{4,5\}\setminus\psi'(u_1)$,
and $\psi(v_2)=\{9-\alpha,6\}$. This gives an $X$-enhanced coloring of $G$ extending $\varphi$,
which is a contradiction.

Hence $|\psi'(u_3)\cap \{1,2,3\}|=1$, and we can assume $\psi'(u_3)=\{3,4\}$ and $\psi'(u_1)=\psi'(w)=\{1,2\}$.
By Observation~\ref{int123}, we have $3\in \psi'(u_0)$, and thus $|\{5,6\}\cap \psi'(u_0)|\le 1$ and by
symmetry between the colors $5$ and $6$, we can assume that $6\not\in \psi'(u_0)$.
Let $\psi(v)=\psi'(v)$ for $v\in V(G)\setminus \{v_2,u_2,v_1,u_1\}$,
$\psi(u_1)=\{2,6\}$, $\psi(v_1)=\{4,5\}$, $\psi(u_2)=\{1,5\}$, and $\psi(v_2)=\{4,6\}$.
This gives an $X$-enhanced coloring of $G$ extending $\varphi$, which is a contradiction.
\end{proof}

\begin{lemma}
\label{232424}
Let $(G, X, \varphi)$ be a minimal counterexample with the outer face bounded by a cycle $C$ and with $X=\{x\}$.  Let $xv_1u_1u_2v_2$ and $xv_2u_2u_3v_3$ be cycles bounding 5-faces in $G$,
such that $\deg(v_1)=\deg(v_2)=\deg(v_3)=2$ and $\deg(u_1)=\deg(u_3)=3$.
If $u_1,u_2,u_3\not\in V(C)$, then $\deg(u_2)\ge 5$.
\end{lemma}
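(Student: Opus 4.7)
I argue by contradiction. Suppose $\deg(u_2) \le 4$. Since $u_1, v_2, u_3$ are all neighbors of $u_2$, we have $\deg(u_2) \in \{3, 4\}$. Let $u_0$ and $u_4$ denote the third neighbors of $u_1$ and $u_3$ respectively. Standard arguments using Lemma~\ref{4-face}, triangle-freeness, and the fact that $v_1, v_3$ are degree-2 vertices adjacent to $x$ show $u_0 \ne u_4$ and $u_0, u_4 \ne x$. Corollary~\ref{5-face} applied to the face at $u_1$ containing the edges $u_1 v_1$ and $u_1 u_0$ forces a 5-cycle $u_1 v_1 x v_0 u_0$, exhibiting a common neighbor $v_0$ of $x$ and $u_0$; symmetrically, $x$ and $u_4$ share a common neighbor $v_4$. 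Normalize $\varphi(x) = \{1,2,3\}$. By Observation~\ref{int123} applied to the paths $xv_1u_1$, $xv_3u_3$, $xv_0u_0$ and $xv_4u_4$, any $X$-enhanced coloring $\psi'$ extending $\varphi$ satisfies $\psi'(u_i) \cap \{1,2,3\} \ne \emptyset$ for $i \in \{0,1,3,4\}$.

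In Case $\deg(u_2) = 3$, the third face at $u_2$ is a 5-face bounded by a cycle $u_2 u_1 \alpha \beta u_3$; examining the possibilities for $\alpha \in N(u_1) \setminus \{u_2\}$ and $\beta \in N(u_3) \setminus \{u_2\}$, the only option compatible with $\deg(v_1) = \deg(v_3) = 2$ and triangle-freeness is $\alpha = u_0$, $\beta = u_4$, giving $u_0 u_4 \in E(G)$. Here I take $G' = G - \{v_2, u_2\}$ without further identification (identifying $u_1$ with $u_3$ would create a triangle $u_0 u_4$--merged via $u_0 u_4 \in E(G)$). By minimality, $\varphi$ extends to an $X$-enhanced coloring $\psi'$ of $G'$. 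To extend to $u_2$ and $v_2$, I choose $\psi(u_2)$ of size 2 disjoint from $\psi'(u_1) \cup \psi'(u_3)$ and meeting $\{1,2,3\}$, then set $\psi(v_2) = \{4,5,6\} \setminus \psi(u_2)$. This is immediate when $\{1,2,3\} \not\subseteq \psi'(u_1) \cup \psi'(u_3)$; in the remaining bad case I recolor $\{v_1, u_1\}$ (or symmetrically $\{v_3, u_3\}$) along the lines of Lemma~\ref{232325}, exploiting $\psi'(u_0) \cap \psi'(u_4) = \emptyset$ together with $\psi'(u_i) \cap \{1,2,3\} \ne \emptyset$.

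In Case $\deg(u_2) = 4$, let $w$ denote the fourth neighbor of $u_2$. Lemma~\ref{4-face} gives $xw \notin E(G)$, and analyzing the two 5-faces at $u_2$ adjacent to $u_2 w$ yields common neighbors $z_2$ of $u_0, w$ and $z_1$ of $u_4, w$ through 5-cycles $u_2 u_1 u_0 z_2 w$ and $u_2 u_3 u_4 z_1 w$. Now Lemma~\ref{basic} rules out $u_0 u_4 \in E(G)$: otherwise the 5-cycle $u_0 u_1 u_2 u_3 u_4$ would fail to bound a face, since the four faces at $u_2$ are already determined by the rotation $u_1, v_2, u_3, w$. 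Consequently, identifying $u_1$ with $u_3$ in $G' = G - \{v_2, u_2\}$ yields a triangle-free graph, because the neighbors of the merged vertex are $\{v_1, u_0, v_3, u_4\}$, and no two of these are adjacent in $G$ (the only nontrivial case $u_0 u_4$ was just excluded). By minimality, $\varphi$ extends to an $X$-enhanced coloring $\psi'$ on $G'$ with $\psi'(u_1) = \psi'(u_3)$. Extending to $u_2, v_2$ requires $\psi(u_2)$ of size 2 disjoint from $\psi'(u_1) \cup \psi'(w)$ and meeting $\{1,2,3\}$; this works directly when $\{1,2,3\} \not\subseteq \psi'(u_1) \cup \psi'(w)$, and otherwise a local recoloring of $\{v_1, u_1\}$ using Observation~\ref{int123} on $xv_0u_0$ (analogous to the closing sub-case of Lemma~\ref{232325}) resolves the obstruction.

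The main obstacle will be the detailed recoloring arguments in the bad subcases of both cases; they mirror the bookkeeping in Lemma~\ref{232325} but must accommodate the extra intervening vertex $v_2$ between $u_2$ and $x$, and in Case $\deg(u_2) = 4$ they must additionally preserve $\psi(u_1) \cap \psi(w) = \emptyset$ after recoloring. In each bad case, the color constraints on $\psi'(u_0)$ (via Observation~\ref{int123}) together with the disjointness $\psi'(u_0) \cap \psi'(u_1) = \emptyset$ provide precisely the required degree of freedom.
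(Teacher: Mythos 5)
Your proposal diverges from the paper's proof in two places, and the second divergence introduces a genuine gap.

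First, the case $\deg(u_2)=3$ is dispatched in the paper in one line: apply Lemma~\ref{232325} to the cycles $xv_1u_1u_2v_2$ and $xv_2u_2u_3v_3$ with $\deg(u_1)=\deg(u_2)=3$; the conclusion $\deg(u_3)\ge 5$ contradicts $\deg(u_3)=3$. Your fresh structural argument (deriving $u_0u_4\in E(G)$ and then deleting $v_2,u_2$) can be made to work, but it re-proves a fact already available, and the "recolor along the lines of Lemma~\ref{232325}" step is left unjustified; you should just cite the lemma.

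Second, and more seriously, your treatment of $\deg(u_2)=4$ does not go through as sketched. You take $G'$ to be $G-\{v_2,u_2\}$ with $u_1$ and $u_3$ identified, yielding $\psi'(u_1)=\psi'(u_3)$, and then propose to handle the bad case $\{1,2,3\}\subseteq\psi'(u_1)\cup\psi'(w)$ by a local recoloring of $\{v_1,u_1\}$ analogous to the closing sub-case of Lemma~\ref{232325}. That analogy fails: in Lemma~\ref{232325} the identification is between $u_1$ and $w$, so $\psi'(u_0)$ is automatically disjoint from $\psi'(w)$, which is exactly what powers the recoloring there. Your identification ties $u_1$ to $u_3$, leaving $\psi'(w)$ completely unconstrained relative to $\{1,2,3\}$ and to $\psi'(u_0),\psi'(u_4)$. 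Concretely, consider $\psi'(u_1)=\psi'(u_3)=\{3,g\}$ with $g\in\{4,5,6\}$, $\psi'(w)=\{1,2\}$, and $\psi'(u_0)=\{1,2\}$ (all compatible with the constraints in $G'$). After returning to $G$, any recoloring of $u_1$ must keep $\psi(u_1)$ disjoint from $\psi(u_0)=\{1,2\}$ and must meet $\{1,2,3\}$, forcing $3\in\psi(u_1)$; but $\psi(u_2)$ must avoid $\psi(w)=\{1,2\}$ and must meet $\{1,2,3\}$, forcing $3\in\psi(u_2)$, contradicting $u_1u_2\in E(G)$. No amount of recoloring of $v_1,u_1,v_3,u_3,v_2$ escapes this, because $\psi(u_0)$ and $\psi(w)$ are fixed by $\psi'$.

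The paper avoids this entirely by taking $G'=G-\{v_2,u_2\}+xw$, which is legal because (by Lemma~\ref{4-face} and Lemma~\ref{basic}) $xw\notin E(G)$ and $x,w$ have no common neighbor. The added edge forces $\psi'(w)\subseteq\{4,5,6\}$, normalized to $\{4,5\}$, and the paper then discards $\psi'$ on all of $v_1,u_1,v_2,u_2,v_3,u_3$ and recolors these six vertices from scratch using only $\psi'(u_0)$, $\psi'(u_4)$, $\psi'(w)=\{4,5\}$, with no case split at all. If you want to keep your identification idea, you would need some other device to bound $\psi'(w)\cap\{1,2,3\}$; adding the edge $xw$ is exactly that device, and it is simpler than the identification.
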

\begin{proof}
By Corollary~\ref{5-face}, there exist paths $u_1u_0v_0x$ and $u_3u_4v_4x$ in $G$ with $u_0\neq u_2\neq u_4$.
By Lemma~\ref{basic}, $\deg(u_2)\geq 3$, and by Lemma~\ref{232325}, $\deg(u_2)\ge 4$.
Suppose for a contradiction that $\deg(u_2)=4$, and let $w$ be the neighbor of $u_2$ distinct from
$u_1$, $v_2$, and $u_3$. By Lemma~\ref{basic}, $xw\notin E(G)$, and $w$ and $x$ have no common neighbor.
Let $G'=G-\{v_2,u_2\}+xw$.  Then $G'$ is triangle-free.
By the minimality of $(G, X, \varphi)$, there exists an $X$-enhanced coloring $\psi'$ of $G'$ extending $\varphi$.
We may assume without lose of generality that $\psi'(x)=\{1,2,3\}$ and $\psi'(w)=\{4,5\}$.

Let $\psi$ be the restriction of $\psi'$ to $G-\{v_1,u_1,v_2,u_2,v_3,u_3\}$.
By Observation~\ref{int123}, we can assume $1\in \psi(u_0)$, and thus by symmetry between the colors $2$ and $3$,
and between the colors $4$ and $5$,
we can assume that $\{2,5\}\cap \psi(u_0)=\emptyset$.  Set $\psi(u_1)=\{2,5\}$ and $\psi(v_1)=\{4,6\}$.
By a symmetric argument, there exist $\alpha\in \{1,2,3\}$ and $\beta\in \{4,5\}$ such that
$\{\alpha,\beta\}\cap \psi(u_4)=\emptyset$.  Set $\psi(u_3)=\{\alpha,\beta\}$ and $\psi(v_3)=\{9-\beta,6\}$.
Let $\gamma$ be a color in $\{1,3\}\setminus\{\alpha\}$ and set $\psi(u_2)=\{\gamma,6\}$ and $\psi(v_2)=\{4,5\}$.
This gives an $X$-enhanced coloring of $G$ extending $\varphi$, which is a contradiction.
\end{proof}

\subsection{More reducible configurations}

Before we proceed with our analysis of configurations in a minimal counterexample,
let us establish an auxiliary result on colorings of the graph depicted in Figure~\ref{fig-H}.

\begin{figure}[!htb]
\centering
{\includegraphics[height=0.33\textwidth]{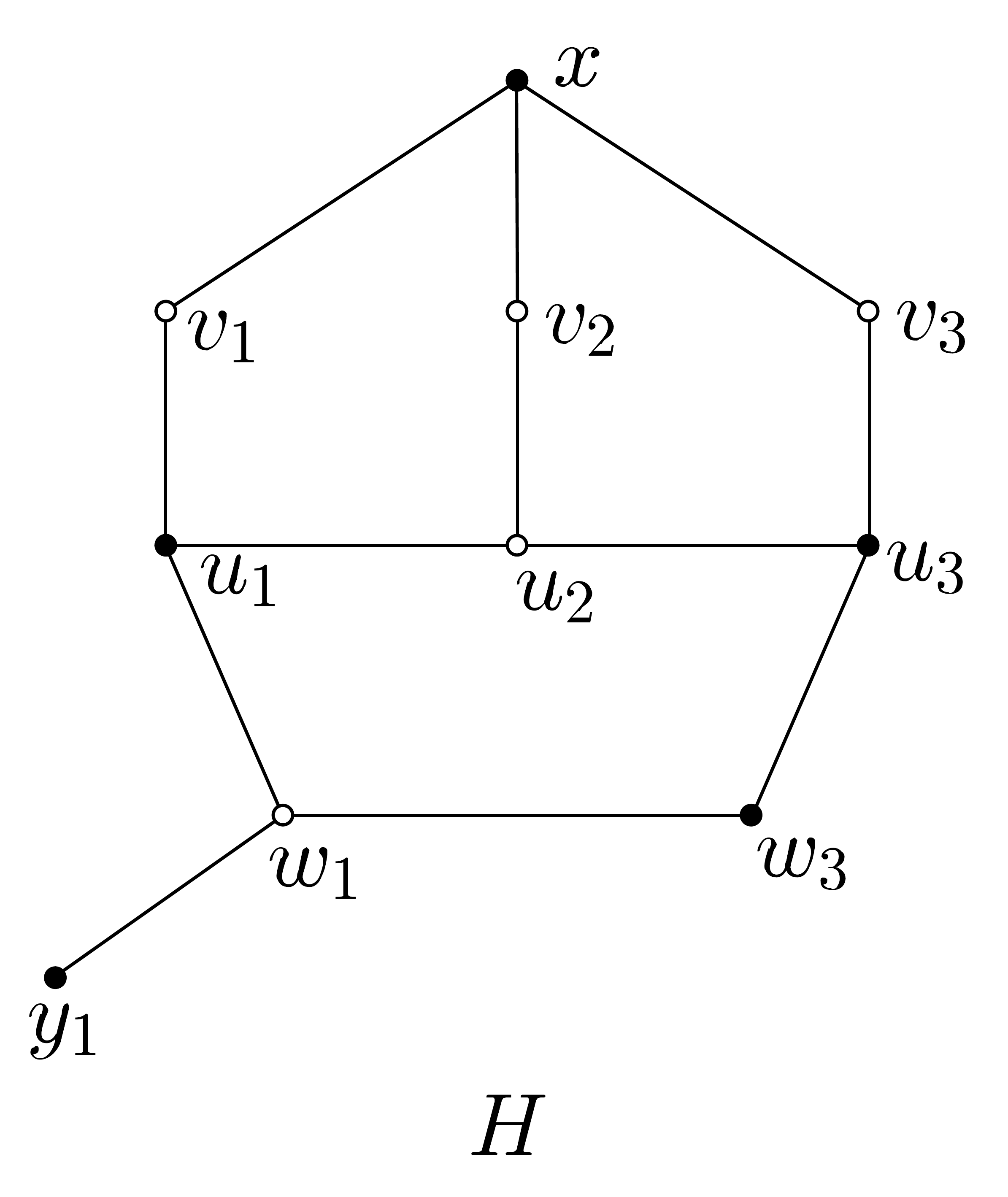}}

\caption{The graph $H$ from the statement of Lemma~\ref{spe-set}.}\label{fig-H}
\end{figure}

\begin{lemma}
\label{spe-set}
Let $H$ be the graph shown in Figure~\ref{fig-H} and let $L$ be an assignment of subsets of $\{1,\ldots,6\}$
to vertices of $H$ satisfying the following conditions: $L(x)=\{1,2,3\}$, $|L(u_1)|=3$ and $L(u_1)\cap\{1,2,3\}=\{3\}$,
$L(u_3)=\{1,2,5,6\}$, $|L(w_3)|=4$, $|L(y_1)|=2$ and $L(y_1)\subseteq \{3,4,5,6\}$, and
$L(v_1)=L(v_2)=L(v_3)=L(u_2)=L(w_1)=\{1,\ldots,6\}$.  There exists a $2$-element set $S\subseteq L(u_1)$
such that $3\in S$ and $S\cap L(y_1)\neq\emptyset$, and for any such set $S$,
the graph $H$ has an $\{x\}$-enhanced coloring $\varphi$ such that $\varphi(v)\subseteq L(v)$ for all $v\in V(H)$ and $\varphi(u_1)=S$.
\end{lemma}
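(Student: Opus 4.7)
The plan for Lemma~\ref{spe-set} breaks into two parts: a short combinatorial argument for the existence of $S$, followed by an explicit construction of the extension for any such $S$.

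For existence of $S$: the conditions $|L(u_1)|=3$ and $L(u_1)\cap\{1,2,3\}=\{3\}$ force $L(u_1)=\{3\}\cup T$ for some two-element $T\subseteq\{4,5,6\}$, so every candidate $S$ has the form $\{3,c\}$ with $c\in T$. If $3\in L(y_1)$, then both candidates satisfy $S\cap L(y_1)\neq\emptyset$. Otherwise $L(y_1)\subseteq\{4,5,6\}$ is a two-element subset of the three-element set $\{4,5,6\}$, as is $T$, so they meet in at least one element by inclusion-exclusion, yielding a valid $c\in T\cap L(y_1)$.

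For the extension, I would fix a valid $S=\{3,c\}$ and set $\varphi(x)=\{1,2,3\}$ and $\varphi(u_1)=S$. Since $|L(y_1)|=2$, the color set on $y_1$ is forced to be $\varphi(y_1)=L(y_1)$; note $L(y_1)\subseteq\{3,4,5,6\}$ ensures compatibility with $\varphi(x)$ as far as $y_1$'s (non-$x$) neighbors in $H$ are concerned. I would then color the remaining vertices in an order respecting the planar structure of $H$: first the restricted-list vertex $u_3$ by choosing a two-element subset of $L(u_3)=\{1,2,5,6\}$ disjoint from its already colored neighbors, then $w_3$ using its four-element list, and finally the five full-list vertices $v_1,v_2,v_3,u_2,w_1$ in turn. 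At each step involving a full-list vertex, at most two of its neighbors in $H$ are previously colored, so at least $6-2\cdot 2=2$ colors remain available, which suffices.

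The main obstacle is the coordination of the restricted-list vertices $u_1, u_3, w_3$ through $u_2$ (which sits between $u_1$ and $u_3$). When $c\in\{5,6\}$, the color $c$ lies in both $\varphi(u_1)$ and $L(u_3)$, restricting $\varphi(u_3)$ to a two-element subset of $\{1,2,5,6\}\setminus\{c\}$, and $\varphi(u_2)$ must avoid $\varphi(u_1)\cup\varphi(u_3)$ while leaving enough room for $\varphi(v_2)$ in the full palette. A case split on $c\in\{4,5,6\}$ (possibly refined by whether $3\in L(y_1)$, which affects what $\varphi(y_1)$ contributes) handles all scenarios; in each case the remaining free choices can be made explicitly, with routine verification that the resulting $\varphi$ is an $\{x\}$-enhanced coloring satisfying $\varphi(v)\subseteq L(v)$ for every $v\in V(H)$.
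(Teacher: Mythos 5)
Your existence argument for $S$ is correct and matches the paper's. The extension argument, however, has a genuine gap that is not patched by the vague ``case split'' you defer to at the end.

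The claim ``at each step involving a full-list vertex, at most two of its neighbors in $H$ are previously colored'' is false for $w_1$. In $H$, the vertex $w_1$ has exactly three neighbors, $u_1$, $w_3$, and $y_1$, and none of them is a full-list vertex, so in any ordering that first colors $x$, $u_1$, $y_1$, $u_3$, $w_3$ and only then the full-list vertices, all three neighbors of $w_1$ are already colored when you reach $w_1$. That is up to six colors to avoid, not four, and the greedy bound $6-2\cdot 2=2$ does not apply. Worse, you never invoke the hypothesis $S\cap L(y_1)\neq\emptyset$ anywhere in the extension; that hypothesis is exactly what guarantees $|\varphi(u_1)\cup\varphi(y_1)|\le 3$ (since $\varphi(u_1)=S$ and $\varphi(y_1)=L(y_1)$ overlap), which is the only thing that can make $w_1$ colorable. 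Without it, $\varphi(u_1)\cup\varphi(y_1)$ could be all of $\{3,4,5,6\}$, and a poorly chosen $\varphi(w_3)$ meeting $\{1,2\}$ would leave at most one color for $w_1$. The paper handles this by coloring $w_1$ \emph{before} $w_3$, choosing $\varphi(w_1)\subseteq L(w_1)\setminus(\varphi(u_1)\cup\varphi(y_1)\cup\{\alpha\})$ where $\alpha$ is a reserved color (depending on whether $L(w_3)\setminus L(u_3)$ is empty), and then choosing $\varphi(w_3)$ from $L(w_3)\setminus\varphi(w_1)$; your order $u_3,w_3,\ldots$ forecloses this control.

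There is a second, smaller issue: you treat the choice of $\varphi(u_3)$ as a free two-element subset of $L(u_3)=\{1,2,5,6\}$ disjoint from its colored neighbors, but $v_3$ is adjacent to $x$ and $u_3$, so $v_3$'s available palette is $\{4,5,6\}\setminus\varphi(u_3)$. If $\varphi(u_3)=\{5,6\}$, only color $4$ survives and $v_3$ cannot be given two colors. Hence $\varphi(u_3)$ must be chosen to contain a color from $\{1,2\}$ (the paper takes $\varphi(u_3)$ with $|\varphi(u_3)\cap\{1,2\}|=1$ for precisely this reason), a constraint that interacts with the choice of $\varphi(w_3)$. Finally, the obstacle you single out, ``coordination through $u_2$,'' is actually the mild one: $u_2$'s only full-list neighbor is $v_2$, so coloring $u_2$ before $v_2$ keeps $u_2$ to two colored neighbors. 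The genuinely delicate vertex is $w_1$, where the hypothesis on $S$ is consumed, together with the interaction between $L(u_3)$ and $L(w_3)$, and that is exactly what your sketch leaves unresolved.
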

\begin{proof}
Since $|L(u_1)|=3$, $|L(y_1)|=2$ and $L(u_1),L(y_1)\subseteq \{3,4,5,6\}$,
we have $L(u_1)\cap L(y_1)\neq \emptyset$.  Hence, there exists a $2$-element set $S\subseteq L(u_1)$
such that $3\in S$ and $S\cap L(y_1)\neq\emptyset$.

Consider any such set $S$, and let $\varphi(u_1)=S$, $\varphi(x)=\{1,2,3\}$ and $\varphi(y_1)=L(y_1)$.
Then $\varphi(u_1)\cup \varphi(y_1)\subseteq \{3,4,5,6\}$ and $|\varphi(u_1)\cup \varphi(y_1)|\leq 3$.
Let $\varphi(v_1)$ be a $2$-element subset of $\{4,5,6\}\setminus S$.

If there exists a color $\alpha\in L(w_3)\setminus L(u_3)$, then let
$\varphi(w_1)$ be a 2-element subset of $L(w_1)\setminus (\varphi(u_1)\cup
\varphi(y_1)\cup \{\alpha\})$. Let  $\varphi(w_3)$  be a 2-element subset of
$L(w_3)\setminus \varphi(w_1)$ such that $\alpha\in L(w_3)$. Then
$|L(u_3)\setminus \varphi(w_3)|\geq 3$. Thus we can choose a 2-element subset
$\varphi(u_3)$ of $L(u_3)\setminus \varphi(w_3)$ such that $|\varphi(u_3)\cap \{1,2\}|=1$;
by symmetry, we can assume that $\varphi(u_3)=\{1,5\}$.  Since $3\in\psi(u_1)$, we have $|\{4,6\}\cap \psi(u_1)|\le 1$,
and thus we can assume that say $4\not\in \psi(u_1)$.
Set $\varphi(u_2)=\{2,4\}$, $\varphi(v_2)=\{5,6\}$, and $\varphi(v_3)=\{4,6\}$.
This gives a set coloring of $H$ as required.

Hence, we can assume $L(w_3)=L(u_3)=\{1,2,5,6\}$.
Let $\varphi(w_1)$ be a 2-element subset of $L(w_1)\setminus (\varphi(u_1)\cup \varphi(y_1))$
such that $1\in \varphi(w_1)$ and $2\notin \varphi(w_1)$. Choose
$\varphi(w_3)$ as a 2-element subset of $L(w_3)\setminus \varphi(w_1)$ containing the color $2$;
by symmetry, we can assume $\varphi(w_3)=\{2,5\}$.  Let $\varphi(u_3)=\{1,6\}$ and $\varphi(v_3)=\{4,5\}$.
Let $\varphi(u_2)$ be a $2$-element subset of $\{2,3,4,5\}\setminus\varphi(u_1)$ containing the color $2$,
and let $\varphi(v_2)$ be a $2$-element subset of $\{4,5,6\}\setminus\varphi(u_2)$.
This again gives a set coloring of $H$ as required.
\end{proof}

\begin{lemma}
\label{242324}
Let $(G, X, \varphi)$ be a minimal counterexample with the outer face bounded by a cycle $C$ and with $X=\{x\}$.  Let $xv_1u_1u_2v_2$, $xv_2u_2u_3v_3$, and $u_1u_2u_3w_3w_1$ be cycles bounding distinct 5-faces in $G$.
If $u_1,u_2,u_3\notin V(C)$ and $\deg(u_1)=\deg(u_3)=4$, then $w_1,w_4\not\in V(C)$ and $\max(\deg(w_1),\deg(w_3))\geq 4$.
\end{lemma}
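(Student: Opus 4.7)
I would argue by contradiction, supposing either $w_1 \in V(C)$, $w_3 \in V(C)$, or $\max(\deg(w_1),\deg(w_3)) \le 3$. First I would collect basic structural facts. Since $u_1, u_3 \notin V(C)$, Lemma~\ref{4-face} rules out $xw_1, xw_3 \in E(G)$ (else $xv_1u_1w_1$ or $xv_3u_3w_3$ is a $4$-cycle different from $C$), and then Lemma~\ref{basic} forces $\deg(w_i) \ge 3$ whenever $w_i \notin V(C)$, so in the contradiction scenario we may assume $\deg(w_1) = \deg(w_3) = 3$. Since $w_1w_3 \in E(G)$ and $G$ is triangle-free, $w_1 \ne w_3$. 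Let $u_0$ be the fourth neighbor of $u_1$ and $u_4$ the fourth neighbor of $u_3$, which exist because $\deg(u_1) = \deg(u_3) = 4$; let $y_1$ and $y_3$ denote the third neighbors of $w_1$ and $w_3$. By Corollary~\ref{5-face}, the remaining faces incident to $u_1$ and $u_3$ are $5$-faces, which further constrains the locations of $u_0, u_4, y_1, y_3$.

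\paragraph{Reduction.} The key move is to delete from $G$ the seven vertices $v_1, u_1, v_2, u_2, v_3, u_3, w_3$, keeping $w_1$ and $y_1$, and to augment the resulting graph with a small number of additional edges joining vertices among $\{u_0, u_4, y_1, y_3\}$ and their neighbors so that any $X$-enhanced coloring $\psi'$ of the resulting graph $G'$ extending $\varphi$ satisfies the list-assignment hypotheses of Lemma~\ref{spe-set}. Concretely, I would aim to arrange that, after relabelling colors, $\psi'(u_0)$ contains both $1$ and $2$ but not $3$, $\psi'(u_4) = \{3,4\}$, $\psi'(y_3) \subseteq \{1,\ldots,6\}$ has size two (giving $|L(w_3)|=4$), and $\psi'(y_1) \subseteq \{3,4,5,6\}$. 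By the minimality of $(G, X, \varphi)$, such a $\psi'$ exists as soon as $G'$ is triangle-free.

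\paragraph{Applying Lemma~\ref{spe-set}.} Defining the list assignment $L$ on the subgraph $H$ (spanned by $x, v_1, u_1, u_2, v_2, v_3, u_3, w_1, w_3, y_1$) by $L(u_1) = \{1,\ldots,6\}\setminus\psi'(u_0)$, $L(u_3) = \{1,\ldots,6\}\setminus\psi'(u_4) = \{1,2,5,6\}$, $L(w_3) = \{1,\ldots,6\}\setminus\psi'(y_3)$, $L(y_1) = \psi'(y_1)$, and $L(v) = \{1,\ldots,6\}$ for the remaining vertices, Lemma~\ref{spe-set} produces an $\{x\}$-enhanced coloring $\varphi_H$ of $H$ respecting $L$ and with $\varphi_H(x) = \psi'(x)$, $\varphi_H(y_1) = \psi'(y_1)$. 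Pasting $\varphi_H$ onto $\psi'$ yields an $X$-enhanced coloring of $G$ extending $\varphi$, contradicting minimality.

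\paragraph{Main obstacle.} The real work lies in justifying the reduction. One must verify that $G'$ with its added edges is triangle-free, which requires ruling out short paths in $G\setminus\{v_1,u_1,u_2,v_2,v_3,u_3,w_3\}$ between the various vertices $u_0, u_4, y_1, y_3, w_1$; this uses triangle-freeness of $G$, Lemma~\ref{4-face}, Corollary~\ref{5-face}, and Lemma~\ref{basic} on degree-$2$ vertices. Whenever such a short path does exist, it produces a small cycle in $G$ that, together with the configuration, forces either $u_1$ or $u_3$ to lie in $V(C)$ or produces a degree-$2$ vertex violating Lemma~\ref{basic}, contradicting the hypotheses in every case. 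The cases $w_1 \in V(C)$ and $w_3 \in V(C)$ are handled by a parallel but simpler reduction, where the existence of a short $x$--$w_i$ path inside $C$ combines with $xv_iu_iw_i$ to yield a $(\le 5)$-cycle that by Lemma~\ref{basic} bounds a face and ultimately forces $u_i \in V(C)$, contradicting the assumption.
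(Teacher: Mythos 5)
Your high-level idea — reduce to a smaller graph, invoke minimality to get a coloring $\psi'$, then patch in a coloring of the local subgraph $H$ via Lemma~\ref{spe-set} — is the same as the paper's. But your specific reduction (delete $v_1,u_1,v_2,u_2,v_3,u_3,w_3$, keep $w_1,y_1$, and recolor $w_1$) has a gap that the paper's reduction is carefully designed to avoid: you cannot force the list hypotheses of Lemma~\ref{spe-set} to hold.

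The lemma needs, after relabelling, $L(u_1)\cap\{1,2,3\}=\{3\}$ (so $3\notin\psi'(u_0)$), $L(u_3)=\{1,2,5,6\}$ (so $\psi'(u_4)=\{3,4\}$), and $L(y_1)\subseteq\{3,4,5,6\}$. Observation~\ref{int123} gives $\psi'(u_0)\cap\{1,2,3\}\neq\emptyset$ and $\psi'(u_4)\cap\{1,2,3\}\neq\emptyset$, but after you fix the labelling of $\{1,2,3\}$ to ensure $3\notin\psi'(u_0)$, you have no freedom left to make $3\in\psi'(u_4)$: $u_4$ could be coloured $\{1,5\}$ and then $L(u_3)$ contains the colour $3$, which Lemma~\ref{spe-set} does not allow. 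Likewise nothing in your $G'$ constrains $\psi'(y_1)$ to avoid $\{1,2\}$ — its only neighbour in $G'$ from the local picture is $w_1$, whose colour you intend to overwrite anyway. Adding edges among $\{u_0,u_4,y_1,y_3\}$ cannot manufacture these constraints: edges only force disjointness, not membership in $\{3,4,5,6\}$, and any such added edge risks creating a triangle through the already-existing paths of length two between these vertices and $x$.

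The paper's reduction is $G'=G-\{v_2,u_2\}$ with $u_3$ and $w_1$ identified. Because $u_1$ and $u_3$ are kept, $\psi'(u_1)$ and $\psi'(u_3)$ exist and one can do a genuine case analysis on $(\psi'(u_1)\cup\psi'(u_3))\cap\{1,2,3\}$: when this has size at most $2$, one extends directly by Observation~\ref{int123} without Lemma~\ref{spe-set}; when $|\psi'(u_1)\cap\{1,2,3\}|=2$ one recolours explicitly; and only in the last case, $\psi'(u_1)\cap\{1,2,3\}=\{3\}$ and $\psi'(u_3)=\psi'(w_1)=\{1,2\}$, does one invoke Lemma~\ref{spe-set} — and precisely then, because $\psi'(u_3)=\{1,2\}$ is disjoint from $\psi'(u_4)$, one gets $3\in\psi'(u_4)$ automatically, and because $\psi'(w_1)=\{1,2\}$, one gets $\psi'(y_1)\subseteq\{3,4,5,6\}$ automatically. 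Your reduction throws away exactly the case information that makes the final step work.

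A minor point on the boundary case: your sketch for $w_i\in V(C)$ says the short $x$--$w_i$ path forces $u_i\in V(C)$; it does not. The correct conclusion (as in the paper) is that the $(\le 5)$-cycle through $x,v_i,u_i,w_i$ would bound a face by Lemma~\ref{basic}, leaving $u_i$ incident with only three faces and hence of degree $3$, contradicting $\deg(u_i)=4$.
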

\begin{proof}
Note that $\deg(u_2)=3$, $\deg(v_2)=2$, and $v_2\not\in V(C)$, and thus $\deg(v_1)=\deg(v_3)=2$ and $v_1,v_3\notin V(C)$
by Lemma~\ref{233}.
By Corollary~\ref{5-face}, there exist paths $u_1u_0v_0x$ and $u_3u_4v_4x$ in $G$ with $u_0\neq u_2\neq u_4$.
If $w_i\in V(C)$ for some $i\in \{1,3\}$, then by Lemma~\ref{basic}, the cycle formed by the path $xv_iu_iw_i$
together with a path of length at most two between $x$ and $w_i$ in $C$ would bound a face, contradicting the assumption $\deg(u_i)=4$.
Hence, $w_1,w_3\not\in V(C)$.  Furthermore, $w_1x,w_2x\not\in E(G)$ by Lemma~\ref{4-face}.  Hence, Lemma~\ref{basic} implies $\deg(w_1),\deg(w_3)\geq 3$.

Suppose for a contradiction that $\deg(w_1)=\deg(w_3)=3$.  For $i\in \{1,3\}$, let $y_i$ be the neighbor of $w_i$ distinct
from $u_i$ and $w_{4-i}$.
Let $G'$ be the graph obtained from $G-\{v_2,u_2\}$ by identifying $u_3$ and
$w_1$.  Since $\deg(w_3)=3$, Lemma~\ref{basic} implies that $u_3$ and $w_1$ are not
joined by a path of length three in $G-u_2$, and thus $G'$ is triangle-free.
By the minimality of $(G,X,\varphi)$, there exists an $X$-enhanced
coloring $\psi'$ of $G'$ extending $\varphi$. We may assume without lose of
generality that $\psi'(x)=\{1,2,3\}$. If $|(\psi'(u_1)\cup \psi'(u_3))\cap \{1,2,3\}|\leq 2$, then $\psi'$
can be extended to $u_2$ and $v_2$ by Observation~\ref{int123}. This gives an $X$-enhanced coloring of $G$ extending $\varphi$, which is a contradiction.
Therefore, $\{1,2,3\}\subseteq \psi'(u_1)\cup \psi'(u_3)$.

Suppose first $|\psi'(u_1)\cap \{1,2,3\}|=2$, and thus we can assume $\psi'(u_1)=\{1,2\}$
and $3\in \psi'(u_3)=\psi'(w_1)$.  By Observation~\ref{int123},
we can assume $\psi'(u_0)=\{3,4\}$.  By symmetry between the colors $5$ and $6$,
we can assume $6\not\in \psi'(u_3)=\psi'(w_1)$.
Let $\psi(v)=\psi'(v)$ for $v\in V(G)\setminus \{u_1,v_1, u_2,v_2\}$,
$\psi(u_1)=\{2,6\}$, $\psi(v_1)=\{4,5\}$, $\psi(u_2)=\{1,\alpha\}$ for a color $\alpha\in\{4,5\}\setminus \psi'(u_3)$,
and $\psi(v_2)=\{9-\alpha,6\}$.
This gives an $X$-enhanced coloring of $G$
extending $\varphi$, which is a contradiction.

Therefore, $|\psi'(u_1)\cap \{1,2,3\}|=1$, and we can assume $\psi(u_1)\cap \{1,2,3\}=\{3\}$ and $\psi(u_3)=\psi(w_1)=\{1,2\}$.
By Observation~\ref{int123}, we can assume $\psi'(u_4)=\{3,4\}$ and $|\psi'(u_0)\cap \{4,5,6\}|\le 1$.
Let $L(x)=\{1,2,3\}$, $L(v_1)=L(v_2)=L(v_3)=L(u_2)=L(w_1)=\{1,\ldots,6\}$, $L(u_3)=\{1,2,5,6\}$,
$L(w_3)=\{1,\ldots,6\}\setminus\psi'(y_3)$, $L(y_1)=\psi'(y_1)$, and let $L(u_1)$ be a $3$-element
subset of $\{3,4,5,6\}\setminus\psi'(u_0)$ containing the color $3$.  Let $R=\{x,v_1,u_1,v_2,u_2,v_3,u_3,w_1,w_3,y_1\}$,
and observe that $G[R]$ is isomorphic to the graph depicted in Figure~\ref{fig-H}.
Let $\psi$ be the union of the restriction of $\psi'$ to $G-R$ and the coloring of $G[R]$ obtained by Lemma~\ref{spe-set}
for the list assignment $L$.  Then $\psi$ is an $X$-enhanced coloring of $G$ extending $\varphi$, which is a contradiction.
\end{proof}

\begin{figure}[!htb]
\centering
{\includegraphics[height=0.3\textwidth]{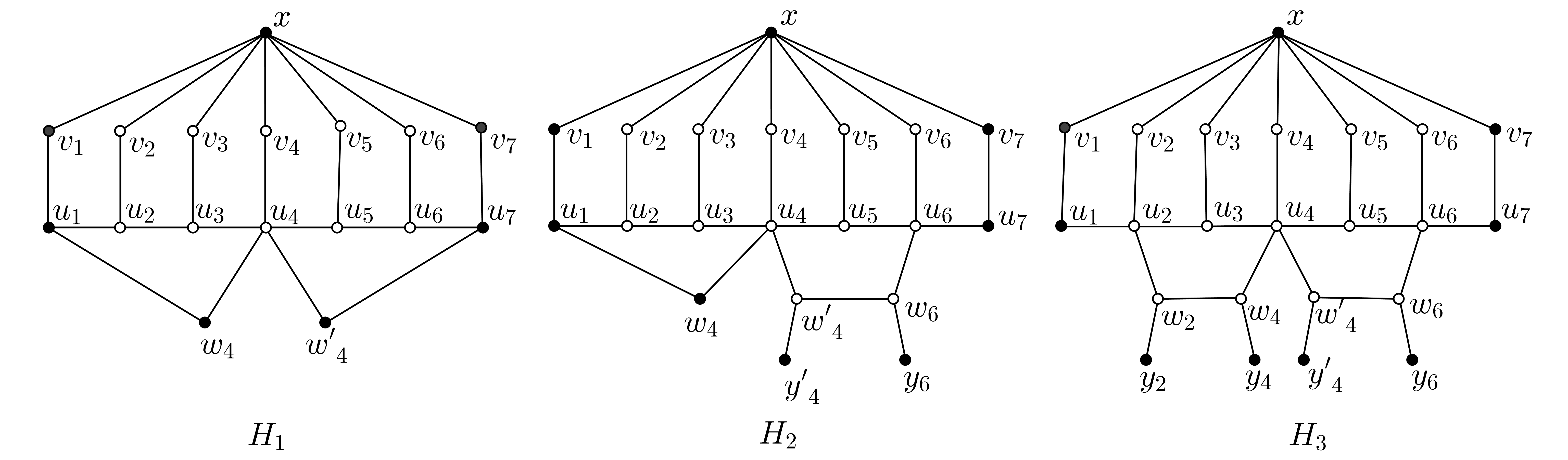}}

\caption{Subgraphs $H_1$, $H_2$ and $H_3$ from Lemma~\ref{232523}.  Vertices depicted by empty circles
are not incident with the outer face of $G$ and their degree in $G$ equals their degree in the figure.}\label{fig-Hi}
\end{figure}

Suppose $u_2u_3u_4w_4w_2$ is a cycle bounding a face in a plane graph $G$, where $u_2$, $u_3$, and $u_4$ are not incident
with the outer face, $\deg(u_3)=3$ and $\deg(u_4)=5$.  We say that the cycle is \emph{$(u_4,u_3)$-dangerous}
if either $\deg(u_2)=3$, or $\deg(u_2)=4$ and $\deg(w_4)=\deg(w_2)=3$.
We now exclude the situations in Figure~\ref{fig-Hi} involving dangerous faces.

\begin{lemma}
\label{232523}
Let $(G, X, \varphi)$ be a minimal counterexample with the outer face bounded by a cycle $C$ and with $X=\{x\}$.  Let $xv_2u_2u_3v_3$, $xv_3u_3u_4v_4$, $xv_4u_4u_5v_5$, and $xv_5u_5u_6v_6$ be distinct cycles bounding
$5$-faces in $G$, where $u_2,\ldots,u_6\not\in V(C)$ and $\deg(u_3)=\deg(u_5)=3$.  Let $K_1=u_2u_3u_4w_4w_2$
and $K_2=u_6u_5u_4w'_4w_6$ be $5$-cycles bounding faces.  If $\deg(u_4)=5$,
then $K_1$ is not $(u_4,u_3)$-dangerous or $K_2$ is not $(u_4,u_5)$-dangerous.
\end{lemma}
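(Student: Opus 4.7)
The plan is to argue by contradiction, assuming both $K_1$ is $(u_4,u_3)$-dangerous and $K_2$ is $(u_4,u_5)$-dangerous, and to construct an $\{x\}$-enhanced coloring of $G$ extending $\varphi$. First I would rigidify the local structure. Because each of $v_3,v_4,v_5$ sits between two consecutive prescribed $5$-faces whose boundaries share both edges incident with it, each of $v_3,v_4,v_5$ has degree $2$ in $G$ and lies outside $V(C)$. Lemma~\ref{4-face} forbids $x$ from being adjacent to any of $w_2,w_4,w_4',w_6$, and Lemma~\ref{basic} together with Corollary~\ref{5-face} rules out non-facial $(\le\!5)$-cycles through $u_3,u_4,u_5$. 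The dangerous hypothesis for $K_1$ then gives either Case~A ($\deg(u_2)=3$) or Case~B ($\deg(u_2)=4$ with $\deg(w_2)=\deg(w_4)=3$), and symmetrically Case~A' ($\deg(u_6)=3$) or Case~B' ($\deg(u_6)=4$ with $\deg(w_6)=\deg(w_4')=3$) for $K_2$. This yields four subcases AA', AB', BA', BB', and in each Lemma~\ref{233} can be used to propagate further degree-$2$ vertices outside the strip.

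The core reduction is to delete an inner ``strip'' of vertices --- at minimum $\{u_4,v_4\}$, and in the more constrained subcases also subsets of $\{u_3,v_3,u_5,v_5\}$ along with the degree-$3$ $w$-vertices --- and then to identify one or two pairs of outer vertices (for instance $u_2$ with $u_6$, $w_4$ with $w_4'$, or $w_2$ with $w_6$), producing a smaller plane graph $G'$. Triangle-freeness of $G'$ will be verified exactly as in the proofs of Lemmas~\ref{232325},~\ref{232424}, and~\ref{242324}: any triangle in $G'$ corresponds to a short path in $G$ between the identified vertices, and such a path together with the remaining identification closes into a non-facial $(\le\!5)$-cycle of $G$, contradicting Lemma~\ref{basic}. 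By minimality of $(G,X,\varphi)$, the graph $G'$ then admits an $\{x\}$-enhanced coloring $\psi'$ extending $\varphi$, which we may take to be tight in the sense that every non-enhanced vertex receives exactly a $2$-element color set.

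Finally, I would lift $\psi'$ back to an $\{x\}$-enhanced coloring of $G$ by explicitly coloring the deleted vertices. The degree-$2$ vertices $v_3,v_4,v_5$ (and $v_2,v_6$ when deleted) must receive $2$-sets meeting $\varphi(x)=\{1,2,3\}$ by Observation~\ref{int123}; using the symmetries permuting $\{1,2,3\}$ and $\{4,5,6\}$, we may normalize $\psi'$ to leave room. The pivotal vertex is $u_4$: although it has five neighbors in $G$, the identification in $G'$ forces two of its already-colored neighbors to share colors under $\psi'$, so that the union of forbidden colors at $u_4$ has size at most $4$ and a $2$-element set remains available for $\varphi(u_4)$. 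The degree-$3$ vertices $u_3,u_5$ are then extended by a straightforward count. The main obstacle I anticipate is subcase BB', where both $u_2$ and $u_6$ have degree $4$: the coloring at $u_4$ must simultaneously respect constraints propagated through both $w$-pentagons, and I expect the argument to invoke Lemma~\ref{spe-set} on an induced subgraph isomorphic to $H$ --- possibly twice, once for each side --- mirroring the final stage of the proof of Lemma~\ref{242324}.
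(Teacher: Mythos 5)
Your high-level plan is right — assume both $K_1$ and $K_2$ are dangerous, split on the degree profile of $u_2$ and $u_6$ (your AA$'$, AB$'$, BA$'$, BB$'$ match the paper's subgraphs $H_1$, $H_2$, $H_3$), delete a small strip and identify vertices to get a smaller graph $G'$, verify triangle-freeness via Lemma~\ref{basic}, then lift a coloring back, invoking Lemma~\ref{spe-set} twice in the BB$'$ case. However, your proposed reduction is genuinely different from the paper's, and I am not convinced it works. The paper's $G'$ is obtained from $G-\{v_3,u_3,v_5,u_5\}$ by identifying $u_2$ with $w_4$ and $u_6$ with $w'_4$ — crucially, $u_4$ and $v_4$ are \emph{kept} in $G'$, and this single reduction is used uniformly across all three subcases. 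You instead propose deleting $\{u_4,v_4\}$ (and, in tighter cases, also $u_3,v_3,u_5,v_5$ and $w$-vertices) and identifying outer pairs like $u_2\sim u_6$ or $w_4\sim w'_4$. The trouble is that $u_4$ has five neighbors in $G$, and after it is deleted from $G'$ you must recolor it: even with one identification among its neighbors, the forbidden set $\psi'(u_3)\cup\psi'(u_5)\cup\psi'(w_4)$ can have size six, leaving no legal $2$-set, and adding the requirement that $\psi(u_4)$ meet $\{1,2,3\}$ (needed to color $v_4$) makes things worse. By keeping $u_4$ in $G'$, the paper has $\psi'(u_4)$ determined up front; the identifications $u_2\sim w_4$ and $u_6\sim w'_4$ are chosen precisely so that, when $u_3$ is recolored, its two colored neighbors $u_2$ and $u_4$ are forced to be disjoint (since $w_4$ is adjacent to $u_4$ in $G'$), which is exactly the constraint that lets $u_3$ be legally colored with a color in $\{1,2,3\}$.

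There is also a small but substantive error in your lifting step: you say that $v_3,v_4,v_5$ ``must receive $2$-sets meeting $\varphi(x)=\{1,2,3\}$ by Observation~\ref{int123}.'' Since $v_i$ is \emph{adjacent} to $x$, its color set must be \emph{disjoint} from $\varphi(x)$, hence a subset of $\{4,5,6\}$. Observation~\ref{int123} instead tells you that the vertex at distance two from $x$ along the path through $v_i$ (that is, $u_3$, $u_4$, or $u_5$) must have a color in $\{1,2,3\}$; this is the constraint that drives the whole case analysis in the paper (it first splits on whether $\psi'(u_4)\subset\{1,2,3\}$ or $\psi'(u_4)\cap\{1,2,3\}$ is a singleton). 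Getting this constraint attached to the wrong vertices would derail the explicit color assignments in the lifting step.
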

\begin{proof}
Note that $\deg(u_3)=\deg(u_5)=3$, $\deg(v_3)=\deg(v_5)=2$, and $u_2,u_6\not\in V(C)$, and thus $\deg(v_2)=\deg(v_6)=2$ and $v_2,v_6\notin V(C)$
by Lemma~\ref{233}.
By Corollary~\ref{5-face}, there exist paths $u_2u_1v_1x$ and $u_6u_7v_7x$ in $G$ with $u_1\neq u_3$ and $u_7\neq u_5$.
Suppose for a contradiction that $K_1$ is $(u_4,u_3)$-dangerous and $K_2$ is $(u_4,u_5)$-dangerous.
By Corollary~\ref{5-face}, Lemma~\ref{233}, and symmetry, we can assume that $G$ contains one of
the subgraphs $H_1$, $H_2$, or $H_3$ depicted in Figure~\ref{fig-Hi}
(up to possible identification of vertices $u_1$ and $u_7$ in the graph $H_3$; all other identifications
can be excluded using Lemma~\ref{basic}).
Let $G'$ be the graph obtained from $G-\{v_3,u_3,v_5,u_5\}$ by identifying $u_2$ and $w_4$,
and identifying $u_6$ and $w'_4$.
Using Lemma~\ref{basic}, observe $G'$ is triangle-free.  By the minimality of $G$, there exists an $X$-enhanced
coloring $\psi'$ of $G'$ extending $\varphi$. We may assume without lose of generality that $\psi'(x)=\{1,2,3\}$.

Suppose first that $\psi'(u_4)\subset \{1,2,3\}$, say $\psi'(u_4)=\{1,2\}$.
For $i\in \{2,6\}$, by Observation~\ref{int123} we have $\psi'(u_i)\cap\{1,2,3\}\neq\emptyset$.
Hence, we can assume $\psi'(u_2)=\psi'(w_4)=\{3,\alpha\}$ and $\psi'(u_6)=\psi'(w'_4)=\{3,\beta\}$
for some $\alpha,\beta\in \{4,5\}$.  Let $\psi(v)=\psi'(v)$ for
$v\in V(G)\setminus \{v_3,u_3,v_4,u_4,v_5,u_5\}$. Set $\psi(u_4)=\{2,6\}$, $\psi(v_4)=\{4,5\}$,
$\psi(u_3)=\{1,9-\alpha\}$, $\psi(v_3)=\{\alpha,6\}$,
$\psi(u_5)=\{1,9-\beta\}$, and $\psi(v_5)=\{\beta,6\}$.
Then $\psi$ is an $X$-enhanced coloring of $G$ extending $\varphi$, which is a contradiction.

Therefore, by Observation~\ref{int123}, we can assume $\psi'(u_4)\cap \{1,2,3\}=\{3\}$.  Let us now discuss the cases regarding
the ways $K_1$ and $K_2$ could be dangerous.
\begin{itemize}
\item[(i)] Suppose first that $\deg(u_2)=\deg(u_6)=3$, and thus $w_2=u_1$ and $w_6=u_7$, see the
subgraph $H_1$ in Figure~\ref{fig-Hi}.
Let $\psi$ be the restriction of $\psi'$ to $G'-\{v_2,u_2,v_6,u_6\}$.
If $\psi'(u_2)\neq \{1,2\}$, then we can set $\psi(u_2)=\psi'(u_2)$, $\psi(v_2)=\psi'(v_2)$,
choose $\psi(u_3)$ as a $2$-element subset of $\{1,\ldots,6\}\setminus(\psi'(u_2)\cup \psi'(u_4))$
containing color $1$ or $2$, and choose $\psi(v_3)$ as a $2$-element subset of $\{4,5,6\}\setminus\psi(u_3)$.
If $\psi'(u_2)=\{1,2\}$, then by Observation~\ref{int123} and symmetry,
we can assume $\psi'(u_1)=\{3,4\}$ and $6\not\in \psi'(u_4)$.
We set $\psi(u_2)=\{1,5\}$, $\psi(v_2)=\{4,6\}$,
$\psi(u_3)=\{2,6\}$ and $\psi(v_3)=\{4,5\}$.   Symmetrically, we extend $\psi$ to $u_5$, $v_5$, $u_6$, and $v_6$.
This gives an $X$-enhanced coloring of $G$ extending $\varphi$, which is a contradiction.

\item[(ii)] Hence, we can by symmetry assume that $\deg(u_6)=4$ and $w'_4$ and $w_6$ are vertices of degree three.
By Lemma~\ref{basic}, we have $w'_4,w_6\not\in V(C)$.
Suppose that $\deg(u_2)=3$, see the subgraph $H_2$ in Figure~\ref{fig-Hi}.
If $\psi'(u_6)\neq \{1,2\}$, then by Observation~\ref{int123},
we can assume that $\psi'(u_6)=\{2,4\}$ and $\psi'(u_4)\subseteq\{3,4,5\}$.
Let $\psi(v)=\psi'(v)$ for $v\in V(G)\setminus \{u_2,v_2,v_3,u_3,v_5,u_5\}$. Then $\psi$
extends to $u_2$, $v_2$, $u_3$, $v_3$ as in the previous case, and we can choose
$\psi(u_5)=\{1,6\}$ and $\psi(v_5)=\{4,5\}$.  This gives an $X$-enhanced coloring of $G$ extending $\varphi$, which is a contradiction.

Therefore, $\psi'(u_6)=\{1,2\}$.  By Observation~\ref{int123}, we can assume
$\psi'(u_7)=\{3,4\}$.  Let $R=\{x,v_4,u_4,v_5,u_5,v_6,u_6,w_4',y_4',w_6\}$,
where $y'_4$ is the neighbor of $w'_4$ distinct from $u_4$ and $w_6$.
Note that $G[R]$ is isomorphic to the graph depicted in Figure~\ref{fig-H}.
Since $3\in \psi'(u_4)$ and $\psi'(u_6)=\psi'(w'_4)$, by Observation~\ref{int123} we have $\psi'(w_4)\cap \{1,2\}\neq\emptyset$,
and thus there exists a $3$-element set $L(u_4)\subseteq \{3,\ldots,6\}\setminus \psi'(w'_4)$ containing the color $3$.
Let $\psi$ be the $X$-enhanced coloring of $G-\{u_3,v_3\}$ obtained from the restriction of $\psi'$ to $G-(R\cup\{u_3,v_3\})$
by extending it to $G[R]$ using Lemma~\ref{spe-set}.  Note that $\psi(u_4)\subseteq\{3,4,5,6\}$ and $3\in\psi(u_4)$.

By Observation~\ref{int123}, we have $\psi(u_2)\cap \{1,2\}\neq\emptyset$.
If $\psi(u_2)\neq\{1,2\}$, then $\psi$ can be extended to $u_3$ and $v_3$ by Observation~\ref{int123}. This gives an $X$-enhanced coloring of $G$
extending $\varphi$, which is a contradiction.
If $\psi(u_2)=\{1,2\}$, then by Observation~\ref{int123}, $\psi(u_1)=\{3,\alpha\}$ for some $\alpha\in \{4,5,6\}$.
Let $\psi_0(v)=\psi(v)$ for $v\in V(G)\setminus \{v_3,u_3,v_2,u_3\}$, $\psi_0(u_2)=\{1,\beta\}$, and $\psi_0(u_3)=\{2,\gamma\}$
for $\beta\in \{4,5,6\}\setminus \{\alpha\}$ and $\gamma\in\{4,5,6\}\setminus(\psi(u_4)\cup\{\beta\})$.
Then $\psi_0$ can be extended to $v_2$ and $v_3$ by Observation~\ref{int123}, giving an $X$-enhanced coloring of $G$
extending $\varphi$, which is a contradiction.

\item[(iii)] Therefore, $\deg(u_2)=\deg(u_6)=4$ and $w_2$, $w_4$, $w'_4$ and $w_6$ are vertices of degree three,
see the subgraph $H_3$ in Figure~\ref{fig-Hi}.
By Lemma~\ref{basic}, we have $w_2,w_4,w'_4,w_6\not\in V(C)$.
If $\psi'(u_2)\neq \{1,2\}$ and $\psi'(u_6)\neq \{1,2\}$, then $\psi'$
extends to $u_2$, $v_2$, $u_5$ and $v_5$ by Observation~\ref{int123}. This gives an $X$-enhanced coloring
of $G$ extending $\varphi$, which is a contradiction.

Hence, we can by symmetry assume $\psi'(u_6)=\{1,2\}$.  By Observation~\ref{int123}, we can assume $\psi'(u_7)=\{3,4\}$.
Let $R=\{x,v_4,u_4,v_5,u_5,v_6,u_6,w_4',y_4',w_6\}$, where $y'_4$ is the neighbor of $w'_4$ distinct from $u_4$ and $w_6$.
If $\psi'(u_2)\neq \{1,2\}$, then color $G[R]$ by Lemma~\ref{spe-set} and then extend the coloring to $u_3$ and $v_3$ as in the case (ii).
Hence, we can also assume that $\psi'(u_2)=\{1,2\}$, and $\psi'(u_1)=\{3,\alpha\}$ for some $\alpha\in\{4,5\}$.
Let $R'=\{x,v_2,u_2,v_3,u_3,v_4,u_4,w_4,y_4,w_2\}$, where $y_4$ is the neighbor of $w_4$ distinct from $u_4$ and $w_2$.
Since $\psi'(w_4)=\psi'(w'_4)=\{1,2\}$, we have $\psi'(y_4),\psi'(y'_4)\subset \{3,4,5,6\}$, and if $\psi'(y_4)\cap \psi'(y'_4)=\emptyset$,
then $\psi'(y_4)\cup\psi'(y'_4)=\{3,4,5,6\}$.  Hence, there exists a $2$-element set $S\subseteq \{3,4,5,6\}$ such that
$3\in S$ and $S\cap \psi'(y_4)\neq\emptyset\neq S\cap\psi'(y'_4)$.

Let $\psi$ be the restriction of $\psi'$ to $G-(R\cup R')$.  By Lemma~\ref{spe-set}, $\psi$ extends to
colorings $\psi_1$ of $G[R]$ and $\psi_2$ of $G[R']$ such that $\psi_1(x)=\psi_2(x)=\{1,2,3\}$ and
$\psi_1(u_4)=\psi_2(u_4)=S$.  Note also $\psi_1(v_4)=\psi_2(v_4)=\{3,4,5,6\}\setminus S$.  Then $\psi\cup\psi_1\cup\psi_2$ is an $X$-enhanced coloring of $G$ extending $\varphi$,
which is a contradiction.
\end{itemize}
\end{proof}

\subsection{Discharging}

\subsubsection{Notation}

Consider a minimal counterexample $(G, X, \varphi)$ with the outer face bounded by a cycle $C$. By Corollary~\ref{5-face},
every face other than the outer one is a 5-face. If $X=\{x\}$, consider a cycle
$xv_1u_1u_2v_2$ such that $u_1,v_1\not\in V(C)$ bounding a $5$-face $f$.
If $\deg(v_1)=2$, $\deg(u_1)=3$ and $u_2\notin V(C)$, then $\deg(v_2)=2$ and $v_2\notin V(C)$ by Lemma~\ref{233}, and we say
$f$ is a {\em type-A} face. By Lemma~\ref{basic}, we have $\deg(u_2)\geq 3$. If
$\deg(u_2)= 3$, then we say $f$ is a {\em type-A-1} face incident with $x$. If
$\deg(u_2)= 4$, then we say $f$ is a {\em type-A-2} face incident with $x$. If
$\deg(u_2)\geq 5$, then we say $f$ is a {\em type-A-3 face} incident with $x$.
Since $\deg(v_1)=\deg(v_2)=2$, there exists cycles $xv_1u_1u_0v_0$ and $xv_2u_2u_3v_3$ bounding faces distinct
from $f$. For $i\in\{0,3\}$, if $u_i\in V(C)$ or $\deg(u_i)\geq 5$, and $f$ is a type-A-1 face or type-A-2 face,
then we say $u_i$ is \emph{connected} to $f$.

Suppose $f$ is a $5$-face bounded by a 5-cycle $xv_1u_1u_2v_2$ satisfying $u_1,v_1\not\in V(C)$,
$\deg(v_1)=2$, $\deg(u_1)=4$, and $\deg(v_2)\ge 3$ ($v_2$ may or may not belong to $V(C)$).  In this case we say $f$ is a {\em type-B face}.

Suppose now a cycle $xv_1u_1u_2v_2$ bounds a type-A-2 face $f_1$ incident with $x$, where $\deg(u_1)=4$ and $\deg(u_2)=3$.
Since $\deg(v_2)=2$, there exists a cycle $xv_2u_2u_3v_3$ bounding a face $f_2$ distinct from $f_1$.
Suppose furthermore $u_3\not\in V(C)$; then $\deg(v_3)=2$ by Lemma~\ref{233}, and $\deg(u_3)\geq 4$ by Lemma~\ref{232325}. Let us
consider the case that $\deg(u_3)=4$, and let $u_1u_2u_3w_3w_1$ be the cycle bounding the $5$-face $g$ incident with $u_2$ distinct from $f_1$ and $f_2$.
Note that $\deg(w_1)\geq 4$ or $\deg(w_3)\geq 4$ by Lemma~\ref{242324}. We say $g$ is a {\em type-C face}, and
for $i\in\{1,2\}$ we say $g$ is \emph{connected} to $f_i$ if $\deg(w_{2i-1})=3$.
Note that a type-C face is connected to at most one type-A-2 face and is incident with
at least three $4^+$-vertices.
A type-A-2 face is \emph{tight} if no vertex or type-C face is connected to it.

Continuing in the situation of the previous paragraph,
suppose that $\deg(w_3)\geq 4$. Since $\deg(v_3)=2$, there exists a cycle
$xv_3u_3u_4v_4$ bounding a $5$-face $f_3$ distinct from $f_2$.
Suppose that $u_4\not\in V(C)$.
\begin{itemize}
\item If $\deg(v_4)=2$, then
$\deg(u_4)\geq 4$ by Lemma~\ref{232424}. If $\deg(u_4)=4$, then let
$w_3u_3u_4w_4y_3$ be a cycle bounding the $5$-face $h$ incident with $u_3$
distinct from $f_2$, $f_3$, and $g$, see the left graph in Figure~\ref{fig-type}
for an illustration. We say $h$ is a {\em type-D face} connected to $f_2$.
\item Suppose now $\deg(v_4)=\deg(u_4)=3$ (so $f_3$ is a type-B face) and a cycle
$v_4u_4w_4y_4z_4$ bounds a $5$-face $k\neq f_3$,
where $\deg(y_4)=\deg(z_4)=3$, $v_4,w_4,y_4,z_4\not\in V(C)$
and $\deg(w_4)\geq 4$. Let $q$ be the face incident with $u_4$ distinct from $f_3$ and $k$,
bounded by the cycle $q=w_4u_4u_3w_3y_3$, see the right graph in Figure~\ref{fig-type}
for an illustration. We say $q$ is a {\em type-E face} connected to $k$.  Note that each type-E face is incident with
at least three $4^+$-vertices.
\end{itemize}
By Lemma~\ref{basic}, the distance of $w_3$ and $w_4$ from $x$ is three,
and thus a type-D or type-E face cannot also be a type-A, type-B, or type-C face,
and a type-D face cannot also be a type-E face.
Furthermore, each type-D face is connected to $p\le 2$ type-A-2
faces and is incident with at least $(p+2)$ $4^+$-vertices,
and each type-E face is connected to a unique face.

\begin{figure}[!htb]
\centering
{\includegraphics[height=0.3\textwidth]{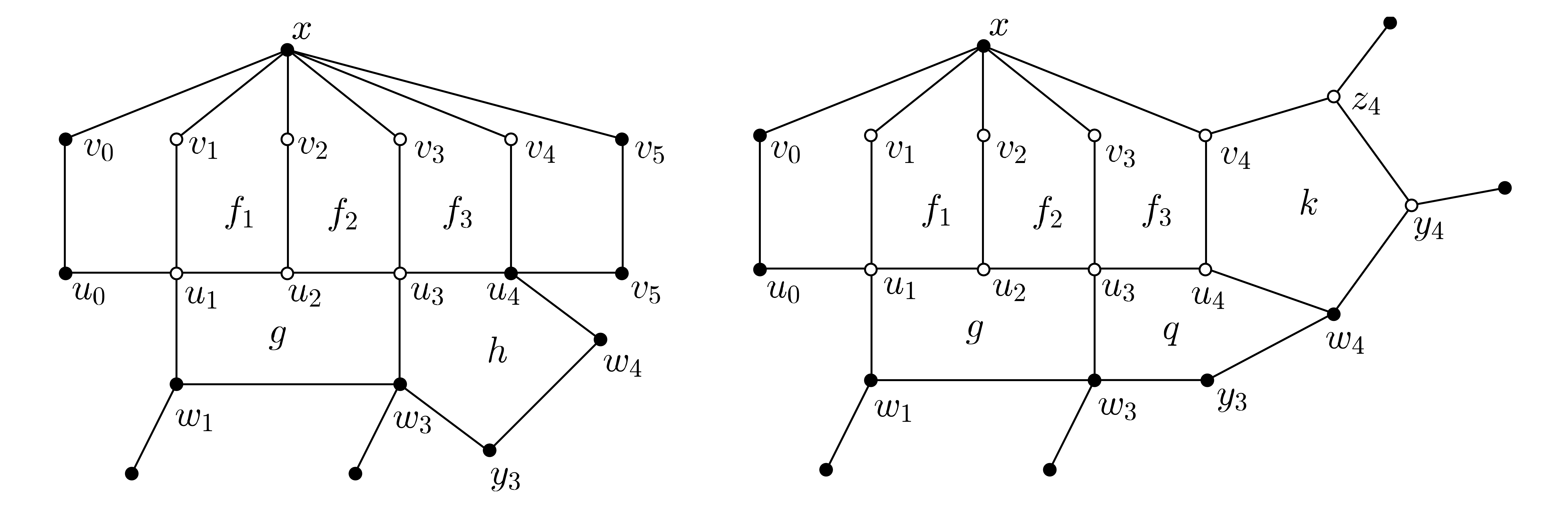}}

\caption{Type-C face $g$, type-D face $h$, and type-E face $q$.}\label{fig-type}
\end{figure}

Suppose now cycles $xv_1u_1u_2v_2$ and $xv_2u_2u_3v_3$ bound distinct $5$-faces $f_1$
and $f_2$, where $u_1,u_2,u_3\not\in V(C)$, $\deg(v_1)=\deg(v_2)=\deg(v_3)=2$,
$\deg(u_1)=5$, $\deg(u_2)=3$, and $\deg(u_3)=4$.  Let $g$ be the face incident with $u_2$
distinct from $f_1$ and $f_2$, bounded by the cycle $u_1w_1w_3u_3u_2$.
If for some $i\in\{1,3\}$, the vertex $w_i$ has degree at least four,
we say $g$ is a {\em type-F face} connected to $u_1$. Note that each type-F face is incident with
at most two vertices of degree three not belonging to $V(C)$.  By Lemma~\ref{basic}, the distance of $w_3$ and $w_4$ from $x$ is three,
and thus a type-F face cannot also be a type-A, \ldots, or type-E face, and each type-F face is connected
to a unique vertex.

Let $Q$ be a 5-cycle in $G$ vertex-disjoint from $X$ and intersecting $C$ in at most one vertex.  We say the
face bounded by $Q$ is \emph{tied} to a vertex $z\in V(C)$ if $z\notin V(Q)$
and $z$ has a neighbor in $V(Q)\setminus V(C)$ of degree three. Suppose $X=\{x\}$
and $x$ is tied to a 5-face $f$ not incident with $x$ bounded by
the cycle $v_5v_1v_2v_3v_4$ via an edge $xv_5$.  By Lemmas~\ref{basic} and \ref{4-face},
no vertex of $C$ is incident with $f$.  By Corollary~\ref{x-tie-5-face}, a vertex
incident with $f$ has degree at least four, without loss of generality $v_1$ or $v_2$.
If four vertices of $Q$ have degree three, then let $g$ be the face whose boundary
contains the path $xv_5v_1$; in this situation, we say that $f$ is a \emph{special 5-face tied to $x$ and
connected to $g$}.

\subsubsection{Initial charge and discharging rules}

Now we proceed by the discharging method. Consider a minimal counterexample $(G,X,\varphi)$ with the outer
face bounded by the cycle $C$.
Set the initial charge of every vertex $v$ of $G$ to be $\ch_0(v)=\deg (v)-4$,
and the initial charge of every face $f$ of $G$ to be $\ch_0(f)=|f|-4$.
By Euler's formula,
\begin{align}
\sum_{v\in V(G)}\ch_0(v)+\sum_{f\in F(G)}\ch_0(f)&=\sum_{v\in V(G)}(\deg (v)-4)+\sum_{f\in F(G)}(|f|-4)\nonumber\\
&=4(|E(G)|-|V(G)|-|F(G)|)=-8.\label{eq:sum}
\end{align}

We can without loss of generality assume that $X\neq\emptyset$ (and thus $|X|=1$),
as otherwise we observe that the cycle $C$ bounding the outer face contains a subpath
$uxv$ such that $|\varphi(u)\cup\varphi(v)\cup\varphi(x)|\le 5$, and we can
set $X=\{x\}$ and add a color to $\varphi(x)$.  Let $x$ denote the unique vertex in $X$.
We redistribute the charges according to the following rules.

\begin{enumerate}[label=\textbf{R\arabic*}]
\item Each face other than the outer one sends $\frac{1}{3}$ to each incident vertex that either has degree two and belongs to $V(C)$, or has degree three and does not belong to $V(C)$. \label{ch:fto3ver}
\item Each face sends $1$ to each incident vertex that has degree two and does not belong to $V(C)$.  \label{ch:fto2ver}
\item The vertex $x$ sends 1 to each incident face other than the outer one.  \label{ch:xtof}
\item Each $5^+$-vertex other than $x$ sends $\frac{1}{3}$ to each incident type-A-3 face.  \label{ch:5vtoAdirect}
\item If $v\neq x$ is a $5^+$-vertex or belongs to $V(C)$, then $v$ sends $\frac{1}{3}$ to each connected type-A-1 face or type-A-2 face.  \label{ch:5vtoAconnect}
\item Each type-B face sends $\frac{1}{3}$ to each tight type-A-2 face with which it shares an edge incident with $x$. \label{ch:BtoA}
\item Each type-C face sends $\frac{1}{3}$ to each connected type-A-2 face.  \label{ch:CtoA}
\item Each type-D face sends $\frac{1}{3}$ to each connected type-A-2 face. \label{ch:DtoA}
\item Each type-F face sends $\frac{1}{3}$ to each connected 5-vertex. \label{ch:Fto5v}
\item Suppose $f$ is a special $5$-face tied to $x$ and connected to a face $g$. If a type-E face $h$ is connected to $f$,
then $h$ sends $\frac{1}{3}$ to $f$, otherwise $g$ sends $\frac{1}{3}$ to $f$.\label{ch:xto5face}
\item Each vertex on the outer face other than $x$ sends $\frac{1}{3}$ to each 5-face tied to it.\label{ch:vto5face}
\end{enumerate}

Let the charge obtained by these rules be called final and denoted by $\ch$.  Note that the redistribution does not change the
total amount of charge, and thus the sum of the final charges assigned to vertices and faces of $G$ is $-8$ by (\ref{eq:sum}).

\subsubsection{Final charges of vertices}

\begin{lemma}
\label{inter-vertex}
Let $(G, \{x\}, \varphi)$ be a minimal counterexample with the outer face bounded by a cycle $C$.
Then each vertex $v \in  V (G)\setminus V(C)$ satisfies $\ch(v)\geq 0$.
\end{lemma}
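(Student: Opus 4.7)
The plan is to proceed by case analysis on $\deg(v)$. The cases $\deg(v)\in\{2,3,4\}$ are immediate: a degree-$2$ interior vertex is adjacent to $x$ by Lemma~\ref{basic}, so rule R2 supplies $1+1=2$ to offset the initial charge of $-2$; a degree-$3$ vertex receives $3\cdot\tfrac{1}{3}=1$ via R1; and a degree-$4$ vertex neither sends nor receives any charge.

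The main work is the case $\deg(v)=5$ (so $v\neq x$ since $x\in V(C)$), with initial charge $1$. I would first observe that $v$ has at most one degree-$2$ neighbor adjacent to $x$: any two such neighbors together with $v$ and $x$ would form a $4$-cycle different from $C$ (since $v\notin V(C)$), contradicting Lemma~\ref{4-face}. Moreover, every outgoing charge from $v$ under R4 or R5 forces $v$ to be adjacent to such a degree-$2$ neighbor, because an incident type-A-3 face has $v$ as $u_2$ adjacent to the degree-$2$ vertex $v_2$, and a connected type-A-1/A-2 face has $v$ as $u_0$ (adjacent to $v_0$) or $u_3$ (adjacent to $v_3$). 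Writing $w$ for this unique neighbor (if any) and $f_1,f_2$ for the two faces containing the edge $vw$, all outgoing charge from $v$ is attributable to $f_1$ or $f_2$.

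Writing the boundary of $f_i$ as $xb_ia_ivw$, I would show that $f_i$ contributes at most $\tfrac{2}{3}$ to $v$'s outgoing charge, with equality precisely when $\deg(b_i)=2$ and $\deg(a_i)=3$ (making $f_i$ type-A-3, so $v$ sends $\tfrac{1}{3}$ via R4) and additionally the face $f^{\ast}_{f_i}$ across the edge $a_ib_i$ is type-A-1 or type-A-2 (giving $\tfrac{1}{3}$ via R5 in the $u_0$-role, the $u_3$-role being excluded by triangle-freeness). Hence total outgoing is at most $\tfrac{4}{3}$; in the worst case, the four faces $f^{\ast}_{f_2},f_2,f_1,f^{\ast}_{f_1}$ are four consecutive type-A faces incident to $x$ satisfying the hypotheses of Lemma~\ref{232523} with $v$ as the degree-$5$ vertex $u_4$. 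That lemma implies at least one of the two adjacent $5$-cycles $K_1,K_2$ is not dangerous; combined with the fact that the relevant $u_2$ or $u_6$ must have degree $4$ (so $f^{\ast}_{f_i}$ is type-A-$2$ rather than type-A-$1$, the case $\deg(u_2)=\deg(u_6)=3$ being excluded directly by Lemma~\ref{232523}), this forces the corresponding incident face of $v$ to be a type-F face connected to $v$, which sends $\tfrac{1}{3}$ back to $v$ via R9, restoring the final charge to $1-\tfrac{4}{3}+\tfrac{1}{3}=0$.

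For $\deg(v)\geq 6$, the same structural bound gives outgoing at most $\tfrac{4}{3}$, strictly less than the initial charge $\deg(v)-4\geq 2$, so the final charge is positive. The main obstacle is the worst-case analysis for $\deg(v)=5$: one has to verify that Lemma~\ref{232523}'s ``not dangerous'' conclusion, in the subcase $\deg(u_2)=4$ or $\deg(u_6)=4$, translates precisely to the degree conditions defining a type-F face connected to $v$, and then carry out the short case analysis over the possible values of $\deg(u_2),\deg(u_6)\in\{3,4\}$.
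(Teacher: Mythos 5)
Your proposal is correct and follows essentially the same route as the paper: bound the outgoing charge of a degree-$5$ interior vertex $v$ by $\tfrac{4}{3}$ using the fact (from Lemma~\ref{4-face}) that there is at most one $2$-path from $v$ to $x$, and in the tight case apply Lemma~\ref{232523} to produce a type-F face refunding $\tfrac{1}{3}$ via R9. Two minor slips in the write-up, neither of which affects the conclusion: (1) the intermediate vertex in the R5 (connected) case --- $v_0$ or $v_3$ in the paper's notation --- is not forced to have degree two, so the assertion that every R4/R5 event ``forces $v$ to be adjacent to such a degree-$2$ neighbor'' is not right as stated; the relevant fact is just that $v$ has at most one neighbor (of any degree) adjacent to $x$, and the degree-two condition on $w$ follows a posteriori when $f_i$ is simultaneously type-A-3, which is what the tight $k=4$ case needs. (2) The claim that the $u_3$-role is ``excluded by triangle-freeness'' does not seem justified --- both the $u_0$- and $u_3$-roles can arise (they correspond to the two valid type-A labelings of $f^{\ast}_{f_i}$ when $\deg(\tilde{u}_2)=3$), but they identify the same face, so the per-$f_i$ bound of a single R5 charge holds regardless.
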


\begin{proof}
By Lemma~\ref{basic}, $v$ has degree at least two. If $v$ has degree two, then $v$ receives 1 from
each incident face by \ref{ch:fto2ver}, and thus $\ch(v)=\ch_0(v)+2\times 1=0$. If $v$ has degree three, then it
receives $\frac{1}{3}$ from each incident face by \ref{ch:fto3ver}, and thus $\ch(v) = \ch_0(v)
+3\times \frac{1}{3} = 0$. If $v$ has degree 4, then $\ch(v)=\ch_0(v)=0$.

If $v$ has degree five, then $v$ sends $\frac{1}{3}$ to each incident type-A-3 face
by \ref{ch:5vtoAdirect}, and each connected type-A-1 face or type-A-2 face by
\ref{ch:5vtoAconnect}. Let $k$ be the number of faces to that $v$ sends charge.
By Lemma~\ref{4-face}, there exists at most one path of length two between $v$ and $x$, and
thus $v$ is incident with at most two type-A-3 faces, and connected to at most two type-A-1 or type-A-2 faces,
implying that $k\le 4$.
If $k\leq 3$, then $\ch(v)\geq \ch_0(v) -3\times \frac{1}{3} = 0$.
Hence, we can assume $k=4$.
By Lemma~\ref{232523},  $v$ is incident with at least one type-F face, from which it receives $1/3$ by \ref{ch:Fto5v}.
Therefore, $\ch(v) \geq \ch_0(v)-4\times \frac{1}{3}+\frac{1}{3}= 0$.

If $\deg(v)\geq 6$, then similarly $v$ sends charge to at most four faces by \ref{ch:5vtoAdirect}
and \ref{ch:5vtoAconnect}, and $\ch(v)\geq \ch_0(v)-4\times \frac{1}{3}>0$.
\end{proof}

\begin{lemma}
\label{C-vertex}
Let $(G, \{x\}, \varphi)$ be a minimal counterexample with the outer face bounded by a cycle $C$.
Then $\ch(x)=-3$, and for any vertex $v\in V(C)\setminus\{x\}$,
$\ch(v)=-\frac{5}{3}$ if $\deg(v)=2$ and $\ch(v)\geq \frac{2}{3}(\deg(v)-5)$ if $\deg(v)\geq 3$.
\end{lemma}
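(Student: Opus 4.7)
My plan is to handle the three assertions of the lemma by computing, for each $v\in V(C)$, the net charge $\ch(v)$ directly from the rules R1--R11. Throughout I would use that $G$ is $2$-connected by Lemma~\ref{basic}, so each vertex of degree $d$ is incident with exactly $d$ faces, one of which is the outer face when $v\in V(C)$.

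First I would dispatch the two easier cases. For $x$ (assuming $\deg(x)\ge 3$), R1 does not apply and R4, R5, R9, R11 explicitly exclude $x$, so only R3 operates and sends $1$ from $x$ to each of the $\deg(x)-1$ non-outer incident faces. Hence $\ch(x)=(\deg(x)-4)-(\deg(x)-1)=-3$. For a degree-$2$ vertex $v\in V(C)\setminus\{x\}$, both edges of $v$ lie on $C$, so $v$ has no non-$C$ neighbor and exactly one non-outer incident face $f$; R1 sends $1/3$ from $f$ to $v$, while R5 and R11 do not fire as they both require a degree-$3$ non-$C$ neighbor. This yields $\ch(v)=-2+\frac{1}{3}=-\frac{5}{3}$.

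The substantive case is $v\in V(C)\setminus\{x\}$ with $d=\deg(v)\ge 3$. The only incoming rule is R9, which contributes at least $0$. For outgoing, R3 excludes $v$ and R4 cannot fire ($v=u_2$ in a type-A-3 face requires $u_2\notin V(C)$), so the outgoing reduces to R5+R11. The main obstacle is bounding this combined outgoing by $(d-2)/3$. I would observe that every R5- or R11-charge from $v$ is routed through a non-$C$ neighbor $u$ of $v$ (namely $u_1$ or $u_2$ of the relevant type-A face for R5, and the degree-$3$ vertex of $V(Q)\setminus V(C)$ adjacent to $v$ for R11), and that the charged face is incident with $u$ but not with $v$. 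A degree-$3$ neighbor $u$ has a unique such ``third face'' (both faces at the edge $uv$ contain $v$), and that face cannot simultaneously be a type-A face (which contains $x$) and a tied $5$-face (which is vertex-disjoint from $X=\{x\}$), so $u$ mediates at most one $1/3$-charge. A degree-$4$ neighbor $u$ cannot mediate R11, and a short case analysis using Lemma~\ref{4-face} shows $u$ can serve as $u_2$ in at most one type-A-2 face, so it too contributes at most $1/3$. Non-$C$ neighbors of degree $2$ or $\ge 5$ contribute nothing. Since $v$ has at most $d-2$ non-$C$ neighbors in total, the combined R5+R11 outgoing is at most $(d-2)/3$, yielding
\[
\ch(v)\;\ge\;(d-4)-\frac{d-2}{3}\;=\;\frac{2}{3}(d-5),
\]
as claimed.
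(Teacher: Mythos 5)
Your proof is correct and takes the same approach as the paper. The paper's own argument is considerably terser: for a vertex $v\in V(C)\setminus\{x\}$ of degree $d\ge 3$ it simply asserts that $v$ sends $\frac{1}{3}$ to at most $d-2$ tied or connected faces (via \ref{ch:5vtoAconnect} and \ref{ch:vto5face}), whereas you actually justify this bound by routing each outgoing charge through a distinct non-$C$ neighbor of $v$ and checking (uniqueness of the ``third face'' for degree-$3$ mediators, Lemma~\ref{4-face} for degree-$4$ mediators, and vacuity for degrees $2$ and $\ge 5$) that each mediator carries at most one charge; this is exactly the detail the paper leaves implicit, and you supply it correctly.
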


\begin{proof}
Note that $x$ sends 1 to each incident face other than the outer one by \ref{ch:xtof}, and
thus $\ch(x)=\ch_0(x)-(\deg(v)-1)=-3$.  Consider a vertex $v\in V(C)\setminus \{x\}$.
If $\deg(v) = 2$, then $v$ receives $\frac{1}{3}$ from the incident non-outer face by \ref{ch:fto3ver} and
$\ch(v) = \ch_0(v)+\frac{1}{3}=-\frac{5}{3}$. If $\deg(v)\ge 3$, then by \ref{ch:5vtoAdirect} and \ref{ch:vto5face}
$v$ sends $\frac{1}{3}$ to at most $\deg(v)-2$ faces tied or connected to it, and thus
$\ch(v) \geq \ch_0(v)-(\deg(v)-2)\times \frac{1}{3} = \frac{2}{3}(\deg(v)-5)$.
\end{proof}

\subsubsection{Final charges of faces}

\begin{lemma}
\label{interface}
Let $(G, \{x\}, \varphi)$ be a minimal counterexample with the outer face bounded by a cycle $C$.  Every face $f$ not incident with $x$ satisfies $\ch(f)\geq 0$.
\end{lemma}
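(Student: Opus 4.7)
The plan is to show $\ch(f)\ge 0$ by direct case analysis. By Corollary~\ref{5-face}, $f$ is a $5$-face, so $\ch_0(f)=1$. I first observe that any $2$-vertex incident with $f$ must lie on $V(C)$: otherwise Lemma~\ref{basic} makes it adjacent to $x$, forcing $x\in V(f)$ contrary to hypothesis. Hence rule~\ref{ch:fto2ver} never fires on $f$, and the only rule sending charge from $f$ to an incident vertex is~\ref{ch:fto3ver}. Let $a$ be the number of incident vertices of $f$ that are $2$-vertices of $V(C)$ or $3$-vertices outside $V(C)$, so $f$ sends $a/3$ via~\ref{ch:fto3ver}. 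The remaining outgoing rules applicable to $f$ are~\ref{ch:CtoA},~\ref{ch:DtoA},~\ref{ch:Fto5v}, and~\ref{ch:xto5face} in the type-E role, which activate only when $f$ has the relevant type. Incoming charge can only arrive via~\ref{ch:xto5face} (if $f$ is a special $5$-face) or via~\ref{ch:vto5face} (one $1/3$ per vertex of $V(C)\setminus\{x\}$ to which $f$ is tied).

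I would first dispose of the ``typed'' cases. If $f$ is type-C, Lemma~\ref{242324} provides at least three incident $4^+$-vertices, hence $a\le 2$, and combined with the single outflow via~\ref{ch:CtoA} the total outgoing is at most $1$. If $f$ is type-D connected to $p\le 2$ type-A-2 faces, the definition gives at least $p+2$ incident $4^+$-vertices, so $a\le 3-p$ and the total outgoing is $(a+p)/3\le 1$. If $f$ is type-E or type-F, the definitions guarantee at least three incident $4^+$-vertices, so $a\le 2$ and the single outgoing via~\ref{ch:xto5face} or~\ref{ch:Fto5v} keeps the total at most $1$. Finally, if $f$ is a special $5$-face, then by definition $V(f)\cap V(C)=\emptyset$ and $f$ has four incident $3$-vertices (with one $4^+$-vertex by Corollary~\ref{x-tie-5-face}), so $a=4$; the incoming $1/3$ via~\ref{ch:xto5face} gives $\ch(f)=1+1/3-4/3=0$.

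For a generic $f$ only~\ref{ch:fto3ver} is outgoing, and I split on $|V(f)\cap V(C)|$. If $|V(f)\cap V(C)|\ge 2$, then since a $2$-vertex of $V(C)$ has both neighbours on $V(C)$, a short case-check over $|V(f)\cap V(C)|\in\{2,3,4\}$ yields $a\le 3$, so outgoing is at most $1$. If $|V(f)\cap V(C)|\le 1$, the same observation makes every $a$-contributor a $3$-vertex outside $V(C)$; for $a\le 3$ we are done, and for $a\ge 4$ there are four consecutive such vertices on $f$ and Lemma~\ref{tie-5-face} applies. The first disjunct puts $f$ tied to $x$, which by Lemmas~\ref{basic} and~\ref{4-face} (arguing exactly as in the derivation of special $5$-faces) forces $V(f)\cap V(C)=\emptyset$, and then $f$ is special (already handled), the sub-case $a=5$ being excluded by Corollary~\ref{x-tie-5-face}. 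The second disjunct, after excluding the on-$f$ vertex $v_5$ and any neighbour equal to $x$ (treated by the first case), yields at least $2-|V(f)\cap V(C)|$ neighbours in $V(C)\setminus\{x\}$, each contributing $1/3$ via~\ref{ch:vto5face}; the net charge $1+(2-|V(f)\cap V(C)|)/3-a/3\ge 0$ in each of the three remaining sub-cases $(|V(f)\cap V(C)|,a)\in\{(0,4),(0,5),(1,4)\}$.

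The main obstacle is this last bookkeeping: one must relabel the five vertices of $f$ so that Lemma~\ref{tie-5-face} and the tied/special definitions align, carefully separate the two disjuncts, and invoke Lemmas~\ref{basic} and~\ref{4-face} to forbid $V(C)$-vertices on any $5$-face tied to $x$. Once these alignments are made, the final arithmetic is routine.
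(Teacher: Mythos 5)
Your proof is correct and rests on the same skeleton as the paper's: identify the typed faces (C, D, E, F) and bound them by the same ``at least $p+2$ or three incident $4^+$-vertices'' counts, then fall back on Lemma~\ref{tie-5-face} and Corollary~\ref{x-tie-5-face} for the remaining generic $5$-faces. Where you genuinely diverge is in the generic case. The paper handles a possible degree-two vertex on $f$ by invoking $2$-connectedness plus $G\neq C$ to produce two $3^+$-vertices of $V(C)$ on $f$, contradicting $k\geq 4$; it then splits on whether $f$ is tied to $x$ or not. You instead split on $|V(f)\cap V(C)|$ and use the elementary observation that a degree-two vertex of $V(C)$ on $f$ forces both of its (degree-two, hence on-$f$) $V(C)$-neighbours onto $f$: this kills the $|V(f)\cap V(C)|\le 1$ sub-case cleanly and reduces the $\ge 2$ sub-case to a direct count $a\le 3$, avoiding the $2$-connectedness appeal entirely. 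You then invoke the two disjuncts of Lemma~\ref{tie-5-face} explicitly rather than the paper's ``tied to $x$ / not tied to $x$'' dichotomy, folding the ``special $5$-face'' receipt of $1/3$ under~\ref{ch:xto5face} into a pre-processed case rather than deriving it mid-argument. Your route is somewhat more self-contained and the $a\le 3$ bookkeeping is arguably cleaner; the paper's route is shorter because it offloads the $2$-vertex argument onto a one-line structural claim. One small thing to tighten: you exclude $|V(f)\cap V(C)|=5$ silently (it would force $G=C$, a non-counterexample), and the promised case-check for $|V(f)\cap V(C)|\in\{2,3,4\}$ should be written out since the exclusion of two adjacent degree-two $V(C)$-vertices (triangle) and of degree-two $V(C)$-vertices in the $|V(f)\cap V(C)|=2$ sub-case (no room for both neighbours) are the whole content of that step.
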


\begin{proof}
By Corollary~\ref{5-face}, we have $|f|=5$ and $\ch_0(f)=1$.
Since $f$ is not incident with $x$, Lemma~\ref{basic} implies that
every vertex of degree two incident with $f$ belongs to $V(C)$, and thus $f$
does not send charge by \ref{ch:fto2ver}.  By \ref{ch:fto3ver},
$f$ sends at most $\frac{1}{3}$ to each incident vertex.

If $f$ is a type-C
face, then $f$ sends $\frac{1}{3}$ to each connected type-A-2 face by
\ref{ch:CtoA}. Recall that $f$ is connected to at most one type-A-2 face and
$f$ is incident with at least three $4^+$-vertices, i.e., the number of vertices to that $f$
sends charge is at most 2. Then $\ch(v)\geq \ch_0(v)-\frac{1}{3}-2\times
\frac{1}{3}=0$.

If $f$ is a type-D face, then $f$ sends $\frac{1}{3}$ to each connected
type-A-2 face by \ref{ch:DtoA}. Suppose that $f$ is connected to $p$  type-A-2
faces.  Recall that $p\leq 2$ and $f$ is incident with at least $(p+2)$ $4^+$-vertices, and thus the
number of vertices to that $f$ sends charge is at most $5-(2+p)=3-p$.  Hence, $\ch(v)\geq
\ch_0(v)-p\times\frac{1}{3}-(3-p)\times \frac{1}{3}=0$.

If $f$ is a type-E face, then $f$ is connected to exactly one special 5-face $g$
tied to $x$, and $f$ sends $\frac{1}{3}$ to $g$ by \ref{ch:xto5face}. Recall that
$f$ is incident with least three $4^+$-vertices, and thus the number of
vertices to that $f$ sends charge is at most 2. Hence, $\ch(v)\geq
\ch_0(v)-\frac{1}{3}-2\times \frac{1}{3}=0$.

If $f$ is a type-F face, then $f$ is connected to exactly one 5-vertex $v$, and $f$ sends $\frac{1}{3}$ to $v$ by \ref{ch:Fto5v}.
Recall that the number of vertices to that $f$
sends charge is at most 2. Then $\ch(v)\geq \ch_0(v)-\frac{1}{3}-2\times
\frac{1}{3}=0$.

Therefore, $f$ is not a type-C, type-D, type-E, or type-F face.
Hence,  $f$ only sends $\frac{1}{3}$ to each incident 2-vertex in $C$ or 3-vertex not in $C$
by \ref{ch:fto3ver}.
Let $k$ be the number of vertices to that $f$ sends charge. If $k\leq 3$, then
$\ch(f)\geq 0$, and thus we can assume that $k \geq 4$. If $f$ is incident with
a vertex $v$ of degree two, then note that $v \in V (C)$ by Lemma~\ref{basic}.
Furthermore, since $G$ is 2-connected and $G \neq C$, we conclude that $f$ is
incident with at least two $3^+$-vertices belonging to $V(C)$, to
which $f$ does not send charge. This contradicts the assumption that $k \geq  4$.
Hence, no vertex of degree two is incident with $f$, and thus $k$ is the number
of incident vertices of degree three not belonging to $V(C)$.
If $f$ is tied to $x$, then $f$ is incident with exactly four 3-vertices by Corollary~\ref{x-tie-5-face}. By
\ref{ch:xto5face}, $f$ receives $\frac{1}{3}$ from some face, and thus
$\ch(f) = \ch_0(f)-4\times \frac{1}{3} + \frac{1}{3}= 0$. If $f$ is not
tied to $x$, then $f$ is tied to at least $k-3$ vertices of $C$ by
Lemma~\ref{tie-5-face} and $f$ receives $\frac{1}{3}$ from each of them
by \ref{ch:vto5face}, and $\ch(f)\geq \ch_0(f)-k\times \frac{1}{3} + (k-3)\times \frac{1}{3}= 0$.
\end{proof}

\begin{lemma}
\label{inter-x-face}
Let $(G, \{x\}, \varphi)$ be a minimal counterexample with the outer face bounded by a cycle $C$.
Any face $f$ incident with $x$ other than the outer one satisfies $\ch(f)\geq 0$.
\end{lemma}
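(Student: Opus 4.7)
I would split on the type of $f$. Since $|f|=5$ by Corollary~\ref{5-face}, $\ch_0(f)=1$, and rule~\ref{ch:xtof} delivers another~$1$ from $x$, so the initial budget is~$2$; the job is to bound the outflow under rules~\ref{ch:fto3ver},~\ref{ch:fto2ver}, and~\ref{ch:BtoA} against the further inflow from rules~\ref{ch:5vtoAdirect}--\ref{ch:DtoA} and~\ref{ch:vto5face}.

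For type-A-3 the outflow is $1+1+\tfrac{1}{3}=\tfrac{7}{3}$ and rule~\ref{ch:5vtoAdirect} returns $\tfrac{1}{3}$ from $u_2$, so $\ch(f)=0$. For type-A-1 the outflow is $\tfrac{8}{3}$; applying Lemma~\ref{232325} to $f$ together with each adjacent face (using both orientations of the boundary of $f$) forces $\deg(u_0)\geq 5$ and $\deg(u_3)\geq 5$ whenever those vertices lie outside $V(C)$, so both $u_0,u_3$ are connected and rule~\ref{ch:5vtoAconnect} supplies the missing $\tfrac{2}{3}$. For type-B the outflow is $1$ to $v_1$, at most $\tfrac{1}{3}$ each to $u_2,v_2$, and at most $\tfrac{1}{3}$ via~\ref{ch:BtoA} across $xv_1$ (the edge $xv_2$ cannot host a tight type-A-2 since $\deg(v_2)\geq 3$ prevents the adjacent face from being type-A), for a total of at most~$2$.

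The main case is tight type-A-2, where $f$ sends $\tfrac{7}{3}$ and needs $\tfrac{1}{3}$ more. Non-tightness is immediate from~\ref{ch:5vtoAconnect} or~\ref{ch:CtoA}; so assume tightness. Lemma~\ref{232325} forces $\deg(u_0)=4$, hence $f^{\text{via }v_1}$ is a second type-A-2 face forming a type-C pair with $f$, and tightness translates to $\deg(w_3)\geq 4$ in the type-C face $g$ (with $f$ playing the role of ``$f_2$''). Lemma~\ref{232424} rules out $(\deg(u_3),\deg(v_3))=(3,2)$. If $\deg(v_3)\geq 3$, then $f^{\text{via }v_2}$ is a type-B face sharing $xv_2$ with $f$ and~\ref{ch:BtoA} supplies $\tfrac{1}{3}$. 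Otherwise $(\deg(u_3),\deg(v_3))=(4,2)$, which is exactly the type-D triggering configuration with $f^{\text{via }v_2}$ as the required ``$f_3$''; the resulting type-D face $h$ at $u_2$ is connected to $f$, and~\ref{ch:DtoA} delivers $\tfrac{1}{3}$.

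Any remaining ``other'' face (no orientation of its boundary yields type-A or type-B) is handled by a sub-case analysis on which of $v_1,u_1,u_2,v_2$ lie in $V(C)$. Triangle-freeness together with Lemma~\ref{basic} rules out most extreme combinations, and in each residual case either the outflow drops below $\tfrac{7}{3}$ because a high-degree $V(C)$-vertex incident with $f$ absorbs one of the would-be~$1$'s, or rule~\ref{ch:vto5face} contributes $\tfrac{1}{3}$ from a tied 5-face via a degree-$3$ non-$V(C)$ vertex of $f$ whose third neighbour lies in $V(C)\setminus V(f)$. The principal obstacle is the tight type-A-2 case; threading together Lemmas~\ref{232325}, \ref{232424}, and the type-C/type-D bookkeeping---in particular the role swap that places $f$ in the ``$f_2$'' slot so that~\ref{ch:DtoA} points at $f$ rather than at its neighbour---is the only non-routine step.
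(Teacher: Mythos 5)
Your accounting of the discharging is missing a whole outflow rule: you list \ref{ch:fto3ver}, \ref{ch:fto2ver}, and \ref{ch:BtoA} as the rules under which $f$ can lose charge, but rule \ref{ch:xto5face} also makes a face incident with $x$ send $\tfrac{1}{3}$ --- namely when $f$ plays the role of the face ``$g$'' whose boundary contains the path $xv_5v_1$ in the definition of a special $5$-face tied to $x$. For a type-A face both $x$-neighbours on $f$ have degree two, so $f$ can never be such a $g$ and your bookkeeping for type-A-1/2/3 happens to be fine. But for a type-B face $xv_1u_1u_2v_2$ with $\deg(v_1)=2$, $\deg(u_1)=4$, $\deg(v_2)=3$, $v_2\notin V(C)$, the path $xv_2u_2$ can be the triggering path of \ref{ch:xto5face}, so your claimed total outflow ``at most $2$'' is unjustified: the worst case is $1$ to $v_1$ plus $\tfrac{1}{3}$ to each of $u_2,v_2$ plus $\tfrac{1}{3}$ by \ref{ch:BtoA} plus $\tfrac{1}{3}$ by \ref{ch:xto5face}, which is $\tfrac{7}{3}$ and would make $\ch(f)<0$. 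The paper's proof treats exactly this configuration as a substantive case and rules it out by showing that if $f$ would send by both \ref{ch:BtoA} and \ref{ch:xto5face}, then the face incident with $u_1$ (the degree-$4$ vertex) across from $f$ is forced to be a type-E face connected to the special $5$-face, which by the ``if a type-E face is connected\ldots'' clause of \ref{ch:xto5face} intercepts that $\tfrac{1}{3}$ so $f$ does not send it. Without that argument your type-B case, and hence the lemma, does not close.

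Two smaller issues: rule \ref{ch:vto5face} cannot contribute inflow to $f$ at all, since a $5$-face tied to a vertex must be vertex-disjoint from $X$ and $f$ contains $x$; so the ``tied $5$-face'' inflow you invoke in the ``other'' case is not available, and those residual cases should instead be closed using the $V(C)$-vertices' absorbing the potential $1$'s, plus the observation that \ref{ch:xto5face} is blocked whenever the relevant $x$-neighbour lies in $V(C)$. Your tight type-A-2 analysis via Lemmas~\ref{232325} and \ref{232424} and the type-C/D bookkeeping matches the paper and is essentially correct.
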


\begin{proof}
By Corollary~\ref{5-face}, we have $|f|=5$ and $\ch_0(f)=1$.
Note that $f$ receives $1$ from $x$ by \ref{ch:xtof} and sends charge only by
\ref{ch:fto3ver}, \ref{ch:fto2ver}, \ref{ch:BtoA}, and \ref{ch:xto5face}.
Let $xv_3u_3u_4v_4$ denote the cycle bounding $f$.

Consider first the case that neither $v_3$ nor $v_4$ is a vertex of degree two not belonging to $C$.
Then $f$ sends at most $4\times \frac{1}{3}$ by \ref{ch:fto3ver} and at most $2\times \frac{1}{3}$ by \ref{ch:xto5face},
implying $\ch(f)\ge\ch_0(f)+1-4\times \frac{1}{3}-2\times \frac{1}{3}=0$.

Hence, we can assume $\deg(v_3)=2$ and $v_3\not\in V(C)$, and thus $f$ sends $1$ to $v_3$ by \ref{ch:fto2ver}.
By Lemma~\ref{4-face}, we have $u_3\notin V(C)$, and thus $\deg(u_3)\geq 3$ by Lemma~\ref{basic}.
Let $f_2\neq f$ be the other $5$-face incident with $xv_3$, bounded by a cycle $xv_3u_3u_2v_2$. By \ref{ch:fto3ver} and \ref{ch:BtoA},
$f$ sends at most $\frac{1}{3}$ to $u_3$ and $f_2$ in total.
We now discuss the case that $v_4$ is not a vertex of degree two not belonging to $C$.
\begin{itemize}
\item If $v_4\in V(C)$, then $f$ does not send charge by \ref{ch:xto5face}, and sends at most $\frac{1}{3}$ to
$v_4$ and $u_4$ in total by \ref{ch:fto3ver} (if $\deg(v_4)=2$, then $u_4\in V(C)$, and since $u_3\not\in V(C)$, we have $\deg(u_4)\ge 3$).
Hence, $\ch(f)\ge\ch_0(f)+1-1-\frac{1}{3}-\frac{1}{3}>0$.
Therefore, we can assume $v_4\notin V(C)$, and thus $\deg(v_4)\ge 3$.
By Lemma~\ref{4-face}, we have $u_4\notin V(C)$. Then $\deg(u_3)\geq 4$ by Lemma~\ref{233}, and $f$ does not send charge to $u_3$ by \ref{ch:fto3ver}.
\item If $f$ sends at most $\frac{1}{3}$ by \ref{ch:BtoA} and \ref{ch:xto5face} in total, or $f$ does not send charge by \ref{ch:fto3ver} to at least
one of $v_4$ and $u_4$, then $\ch(f)\ge\ch_0(f)+1-1-3\times \frac{1}{3}=0$.
\item Hence, we can assume that $f$ sends charge by both \ref{ch:BtoA} and \ref{ch:xto5face}, and $f$ sends charge to both $v_4$ and $u_4$ by \ref{ch:fto3ver}.
Consequently, $\deg(u_3)=4$, $\deg(v_4)=\deg(u_4)=3$, and
the neighbor $w_4$ of $u_4$ distinct from $u_3$ and $v_4$ is a $4^+$-vertex. Since $f$ sends charge by \ref{ch:BtoA}, we conclude that $f_2$ is a tight type-A-2 face, and thus $\deg(v_2)=2$, $\deg(u_2)=3$ and $v_2,u_2\not\in V(C)$.
Let $f_1\neq f_2$ be the other $5$-face incident with $xv_2$, bounded by a cycle $xv_1u_1u_2v_2$. Since $f_2$ is tight, we have $u_1\not\in V(C)$ and $\deg(u_1)\le 4$, and thus $v_1\not\in V(C)$
and $\deg(v_1)=2$ by Lemma~\ref{233}.  By Lemma~\ref{232325}, we have $\deg(u_1)=4$.
Let $u_1u_2u_3w_3w_1$ be the cycle
bounding the face $g$ incident with $u_2$ distinct from $f_1$ and $f_2$.  Since $f_2$ is tight,
we conclude that $w_3$ is a $4^+$-vertex, and thus the face incident with $u_3$ distinct from $f$, $f_2$, and $g$
is a type-E face, contradicting the assumption that $f$ sends charge by \ref{ch:xto5face}.
\end{itemize}

Finally, let us consider the case that both $v_3$ and $v_4$ are vertices of degree two not belonging to $V(C)$.
Then $f$ sends charge only by \ref{ch:fto3ver} and \ref{ch:fto2ver}.  By Lemma~\ref{basic}, we have $u_3,u_4\not\in V(C)$.
If both $u_3$ and $u_4$ are $4^+$-vertices, then $f$ only sends charge to $v_3$ and $v_4$ and $\ch(f)\ge\ch_0(f)+1-2\times 1=0$.
Therefore, we can assume $\deg(u_3)=3$.  If $u_4$ is a $5^+$-vertex, then $f$ receives $\frac{1}{3}$ from $u_4$ by \ref{ch:5vtoAdirect}
and $\ch(f)\ge\ch_0(f)+1-2\times 1-\frac{1}{3}+\frac{1}{3}=0$, and thus we can assume $\deg(u_4)\le 4$.
Let $f_2\neq f$ be the face incident with $xv_3$, bounded by a cycle $xv_2u_2u_3v_3$, and
let $f_4\neq f$ be the face incident with $xv_4$, bounded by a cycle $xv_4u_4u_5v_5$.

If $\deg(u_4)=3$, then by Lemma~\ref{232325}, we have either $u_i\in V(C)$ or $\deg(u_i)\ge 5$ for $i\in\{2,5\}$,
and $f$ receives $2\times \frac{1}{3}$ by \ref{ch:5vtoAconnect}, and
$\ch(f)\ge\ch_0(f)+1-2\times 1-2\times\frac{1}{3}+2\times \frac{1}{3}=0$.
Therefore, we can assume $\deg(u_4)=4$.  If $u_2\in V(C)$ or $\deg(u_2)\ge 5$, then $f$ receives $\frac{1}{3}$ by \ref{ch:5vtoAconnect}
and $\ch(f)\ge\ch_0(f)+1-2\times 1-\frac{1}{3}+\frac{1}{3}=0$.  Hence, we can assume $u_2\not\in V(C)$ and $\deg(u_2)\le 4$,
and analogously, $u_5\not\in V(C)$ and $\deg(u_5)\le 4$. By Lemma~\ref{233}, $\deg(v_2)=2$, and $\deg(u_2)=4$
by Lemma~\ref{232325}.  Let $u_2u_3u_4w_4w_2$ be the cycle
bounding the face $g$ incident with $u_3$ distinct from $f_2$ and $f$.  If $\deg(w_4)=3$, then
$f$ receives $\frac{1}{3}$ by \ref{ch:CtoA}, and $\ch(f)\ge\ch_0(f)+1-2\times 1-\frac{1}{3}+\frac{1}{3}=0$.
Hence, we can assume $w_4$ is a $4^+$-vertex, which implies $f$ is a tight type-A-2 face. If $\deg(v_5)\geq 3$,  then $f_4$ is a type-B face. By \ref{ch:BtoA}, $f$ receives $\frac{1}{3}$ from $f_4$, and $\ch(f)\ge\ch_0(f)+1-2\times 1-\frac{1}{3}+\frac{1}{3}=0$.
Therefore, $\deg(v_5)=2$, and since $u_5\not\in V(C)$, we have $v_2\notin V(C)$.
By Lemma~\ref{232424}, we have $\deg(u_5)=4$.  However, then $f$ receives $\frac{1}{3}$ by \ref{ch:DtoA},
and $\ch(f)\ge\ch_0(f)+1-2\times 1-\frac{1}{3}+\frac{1}{3}=0$.
\end{proof}

\subsubsection{Proof of Theorem~\ref{X-tri-free}}

\begin{proof}[Proof of Theorem~\ref{X-tri-free}]
Suppose for a contradiction there exists a minimal counterexample $(G, X, \varphi)$,
with the outer face bounded by a cycle $C$.  As we argued before, we can assume $X\neq\emptyset$; let $X=\{x\}$.
By Lemma~\ref{inter-vertex},
$\ch(v)\geq 0$ for $v\in V(G)\setminus V(C)$. By Lemmas~\ref{interface} and \ref{inter-x-face}, $\ch(f)\geq 0$ for every non-outer face $f$ of $G$.

The final charge of the outer face is $|C|-4$. Consider a vertex $v \in V (C)$.
By Lemma~\ref{C-vertex}, $\ch(x)=-3$, $\ch(v) =-\frac{5}{3}$ if
$v\neq x$ and $\deg(v) = 2$, and $\ch(v) \geq \frac{2}{3}(\deg(v)-5)\geq -\frac{4}{3}$ if $v\neq x$ and
$\deg(v)\geq 3$.   If $|C|=4$, then by Lemma~\ref{basic} and Corollary~\ref{5-face}, all vertices of $C$ have degree
at least three, and thus the sum of the final charges is at least
$(|C|-4)-3-3\times \frac{4}{3}=-7$, a contradiction to (\ref{eq:sum}).

Therefore, $|C|=5$; let $C=xv_1v_2v_3v_4$.  If $V(C)\setminus\{x\}$ contains at most one vertex of degree two, then
the sum of the final charges is at least $(|C|-4)-3-\frac{5}{3}-3\times \frac{4}{3}=-\frac{23}{3}>-8$, a contradiction to (\ref{eq:sum}).
By Lemma~\ref{4-face} and Corollary~\ref{5-face}, no two vertices of degree two in $V(C)\setminus\{x\}$ are adjacent.
Hence, exactly two vertices of $V(C)\setminus\{x\}$ have degree two.  If a vertex of $V(C)\setminus\{x\}$ has degree at least $4$,
then the sum of the final charges is at least $(|C|-4)-3-2\times \frac{5}{3}-\frac{4}{3}-\frac{2}{3}=-\frac{22}{3}>-8$,
a contradiction to (\ref{eq:sum}).  Hence, we can by symmetry assume that $\deg(v_1)=2$ and either $\deg(v_3)=2$ or $\deg(v_4)=2$,
and all other vertices of $V(C)\setminus\{x\}$ have degree exactly three.

If $\deg(v_1)=\deg(v_3)=2$, then by Corollary~\ref{5-face} $G$ contains cycles $v_2y_2y_4v_4v_3$
and $v_2y_2y'_4xv_1$ bounding $5$-faces.  However, then Lemma~\ref{basic} applied to the cycle $y_2y_4v_4xy'_4$ implies
$\deg(y_4)=2$, which is a contradiction.

If $\deg(v_1)=\deg(v_4)=2$, then by Lemma~\ref{basic} and Corollary~\ref{5-face}, $G$ contains cycles
$v_1v_2y_2x_2x$, $v_4v_3y_3x_3x$, and $v_2v_3y_3yy_2$ bounding $5$-faces $f_1$, $f_3$, and $f$.
If $\deg(x_2)\ge 3$, then $f_1$ sends at most $3\times\frac{1}{3}$ to $v_1$, $y_2$, and $x_2$ by \ref{ch:fto3ver}
and at most $\frac{1}{3}$ by \ref{ch:xto5face} and receives $1$ from $x$ by \ref{ch:xtof},
implying $\ch(f_1)\ge \ch_0(f_1)+1-3\times\frac{1}{3}-\frac{1}{3}=\frac{2}{3}$.
It follows that the sum of the final charges is at least $(|C|-4)-3-2\times\frac{5}{3}-2\times \frac{4}{3}+\frac{2}{3}=-\frac{22}{3}>-8$,
a contradiction.  Consequently, $\deg(x_2)=2$. Then $f_1$ sends $1$ to $x_2$ by \ref{ch:fto2ver} and at most $2\times \frac{1}{3}$ to
$v_1$ and $y_2$ by \ref{ch:fto3ver}, and does not send anything by \ref{ch:xto5face}, implying
$\ch(f_1)\ge \ch_0(f_1)+1-1-2\times\frac{1}{3}=\frac{1}{3}$.
Then, the sum of the final charges is at least $(|C|-4)-3-2\times\frac{5}{3}-2\times \frac{4}{3}+\frac{1}{3}=-\frac{23}{3}>-8$,
which is again a contradiction.

We conclude there exists no counterexample to Theorem~\ref{X-tri-free}.
\end{proof}

\section{Set coloring of planar graphs of girth at least 5}

\subsection{Strong hyperbolic property}

A class $G$ of graphs embedded in closed surfaces
(which possibly can have a boundary) is {\em hyperbolic} if there exists a constant $c_{\mathcal{G}}$
such that for each graph $G\in \mathcal{G}$ embedded in a surface $\Sigma$ and each open disk
$\Lambda\subset\Sigma$ whose boundary $\partial\Lambda$ intersects $G$ only in vertices, the number of vertices
of $G$ in $\Lambda$ is at most $c_{\mathcal{G}}(|\partial\Lambda\cap G|-1)$. The class is {\em strongly hyperbolic} if the same
holds for all sets $\Lambda\subset\Sigma$ homeomorphic to an open cylinder (sphere with two
holes).

Let $G$ be a graph and let $S$ be a proper subgraph of $G$.
We say $G$ is {\em $S$-critical} for $(6:2)$-coloring if for
every proper subgraph $H \subset G$ such that $S \subseteq H$, there exists a
$(6:2)$-coloring of $S$ that extends to a $(6:2)$-coloring of $H$, but not to a
$(6:2)$-coloring of $G$.

In~\cite{listfraccyl}, we proved a strengthening of the following claim.

\begin{theorem}[Dvo\v{r}\'ak and Hu~\cite{listfraccyl}]\label{cylinder}
Let $\GG$ be the class of graphs of girth at least five embedded in surfaces such that if $G\in\GG$
is embedded in $\Sigma$ and $S$ is the subgraph of $G$ drawn in the boundary of $\Sigma$,
then $G$ is $S$-critical for $(6:2)$-coloring.  Then $\GG$ is strongly hyperbolic.
\end{theorem}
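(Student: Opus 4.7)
The plan is to extend the reducibility-plus-discharging framework of Section~2 to graphs with boundary, and then handle the cylinder case by a topological cutting argument. The planar analysis used triangle-freeness only mildly and relied mostly on the $5$-face structure forced by girth, so most reducibilities carry over to the girth-$5$, surface-embedded setting with only minor changes.

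First I would establish structural properties of $S$-critical graphs by direct analogues of Lemmas~\ref{basic}--\ref{232523}: each such $G$ is $2$-connected away from $S$, every $(\le 5)$-cycle disjoint from $S$ bounds a face, and various low-degree interior configurations are forbidden. Each reducibility is proved by contraposition: if $G$ contained the configuration, one would construct a proper subgraph $H$ with $S \subseteq H \subsetneq G$ such that every $(6:2)$-extension from $S$ to $H$ extends further to $G$, contradicting $S$-criticality. Then I would assign initial charges $\ch_0(v) = \deg(v) - 4$ and $\ch_0(f) = |f|-4$ and apply suitable analogues of rules R1--R11. For any closed disk $D \subset \Sigma$ whose boundary meets $G$ only in vertices, Euler's formula on $D$ gives total interior charge $-8 + \Theta(|\partial D \cap G|)$, and the reducibilities force non-negative final charge at every interior vertex and face; this yields the hyperbolic bound $|V(G) \cap D| \le c(|\partial D \cap G| - 1)$ for some constant $c$.

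For the strong-hyperbolicity step, given an open cylinder $\Lambda \subset \Sigma$ with $k = |\partial\Lambda \cap G|$, I would argue by cutting: take a shortest simple arc $\alpha$ in $\Lambda$ joining the two boundary components and meeting $G$ only in vertices. Cutting along $\alpha$ turns $\Lambda$ into a topological disk $D'$ whose boundary meets $G$ in roughly $k + 2|\alpha \cap V(G)|$ vertices, and the disk bound applied to $D'$ then controls $|V(G)\cap\Lambda|$ linearly in $|\partial D' \cap G|$.

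The main obstacle — and the crucial use of criticality — is bounding $|\alpha \cap V(G)|$ linearly in $k$ rather than in the "length" of the cylinder. Without criticality one could stack arbitrarily many parallel short non-contractible cycles across $\Lambda$, inflating the cutting cost unboundedly; the reducibility lemmas developed in the first step must be strengthened to rule out such "neutral" annular regions, forcing any shortest transverse arc to be short. This is exactly where the interplay between the girth-$5$ hypothesis (which limits how cycles can nest locally) and $S$-criticality (which limits how far one can travel through $G$ without encountering reducible structure) is essential, and is the step where the detailed analysis of the companion paper~\cite{listfraccyl} does the real work.
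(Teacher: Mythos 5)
The paper does not contain a proof of this statement: it is quoted verbatim from the companion paper~\cite{listfraccyl}, with the remark ``In~\cite{listfraccyl}, we proved a strengthening of the following claim.''\ There is therefore no in-paper argument to compare your proposal against, and I can only assess the sketch on its own terms.

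Your outline is reasonable as far as it goes, but it leaves the central step of strong hyperbolicity unproved. Deriving the disk (hyperbolicity) bound by discharging, with boundary vertices absorbing a deficit linear in $|\partial D\cap G|$, is indeed the standard route, and porting the reducibility lemmas of Section~2 to the bordered, girth-$5$ setting is believable. The problem is the cylinder case. Cutting along a transverse arc $\alpha$ and applying the disk bound to the resulting disk yields $|V(G)\cap\Lambda| \le c\bigl(k + 2|\alpha\cap V(G)| - 1\bigr)$, which is useless unless one separately shows $|\alpha\cap V(G)| = O(k)$. You correctly flag this as the crux, but you do not supply the argument; you only say the reducibility lemmas ``must be strengthened.'' That is precisely where the theorem actually lives. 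Moreover, the cutting-arc mechanism is not the usual way strong hyperbolicity is established in the Postle--Thomas framework that this paper invokes (Theorem~7.11 of~\cite{PosThoHyperb}). The more common route is a direct argument on the cylinder: one shows that in a long cylinder one can find a non-contractible cycle $Q$ whose two sides interact so weakly that any precoloring of the two boundary rings extends independently through $Q$, contradicting criticality. That argument controls the ``length'' of the cylinder directly rather than the length of a cutting arc, and it is a genuinely different (and more delicate) piece of work than the disk discharging. As written, your proposal is an honest road map with the hardest bridge left unbuilt, so it does not constitute a proof of the theorem.
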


By Theorem~7.11 in \cite{PosThoHyperb}, we have the following result.

\begin{theorem}\label{Ckfraccri}
There exists a constant $\lambda>0$ such that the following holds.
Let $G$ be a plane graph of girth at least five and let $C_1$, \ldots, $C_k$ be cycles bounding faces of $G$.
If $G$ is $(C_1\cup C_2\cup \cdots \cup C_k)$-critical for $(6:2)$-coloring, then $|V(G)|\leq \lambda\sum_{i=1}^{k}|C_i|$.
\end{theorem}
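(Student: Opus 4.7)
The plan is to deduce the bound from the strong hyperbolicity of the family $\GG$ established in Theorem~\ref{cylinder}, following the general framework for hyperbolic families developed in \cite{PosThoHyperb}. The first step is to recast the plane setup in a form to which Theorem~\ref{cylinder} applies. Given the plane graph $G$ of girth at least five with cycles $C_1,\ldots,C_k$ bounding distinct faces, I would remove a small open disk from the interior of each of these faces, obtaining a sphere-with-$k$-holes $\Sigma$ in which $G$ is embedded so that $C:=C_1\cup\cdots\cup C_k$ is drawn precisely on $\partial\Sigma$. Because $G$ is $C$-critical for $(6:2)$-coloring and has girth at least five, this embedding lies in $\GG$, so for every open cylinder $\Lambda\subset\Sigma$ whose boundary meets $G$ only in vertices, the number of vertices of $G$ inside $\Lambda$ is at most $c_\GG(|\partial\Lambda\cap G|-1)$.

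The second step, which is the heart of the argument, is a bounded-distance property: there is a constant $D$, depending only on $c_\GG$, such that every vertex of $G$ lies within distance $D$ from $C$ in $G$. I would argue by contradiction. If some vertex $v$ were further than $D$ from $C$, then, using planarity and the girth condition, the combinatorial ball of radius approximately $D/2$ around $v$ sits inside an open disk $\Lambda_0\subset\Sigma\setminus C$ whose boundary meets $G$ in relatively few vertices. Joining $\Lambda_0$ to a boundary component of $\Sigma$ by a thin tube produces an open cylinder, and strong hyperbolicity, combined with the fact that excising the interior of this cylinder would leave a proper subgraph of $G$ still containing $C$ and still failing to extend the precoloring, contradicts $C$-criticality.

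The third step is a layered counting argument. With $D$ from Step~2 in hand, I would partition $V(G)$ into distance classes $R_0,R_1,\ldots,R_D$ from $C$. Each $R_r$ lies in an annular subregion of $\Sigma$ whose two bounding curves together meet $G$ in $O\bigl(\sum_{i=1}^k|C_i|\bigr)$ vertices, so strong hyperbolicity bounds $|R_r|$ linearly in $\sum_i|C_i|$. Summing over the at most $D+1$ layers yields $|V(G)|\le\lambda\sum_{i=1}^k|C_i|$ for a constant $\lambda$ depending only on $c_\GG$ and $D$.

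The principal obstacle is Step~2. The delicate point is to produce, from a hypothetical distant vertex, an open cylinder whose interior contains enough of $G$ to contradict the strong hyperbolicity bound while simultaneously being excisable in a way that preserves $C$-criticality. This requires careful isoperimetric-style reasoning in surfaces, trading off cylinder boundary length against enclosed vertex count, and is exactly the core content of Theorem~7.11 in \cite{PosThoHyperb}; once that theorem is invoked, the remainder of the argument is routine bookkeeping.
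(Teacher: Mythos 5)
Your proposal is correct and takes essentially the same route as the paper. The paper deduces Theorem~\ref{Ckfraccri} directly by applying Theorem~7.11 of \cite{PosThoHyperb} to the strongly hyperbolic class established in Theorem~\ref{cylinder}, and your three-step sketch (recasting on a sphere with holes, bounded width from hyperbolicity, layered counting) reconstructs the internal structure of that cited theorem, which you yourself identify as the key ingredient being invoked.
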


\subsection{Proof of Theorem~\ref{X-girth5}}

Let $G$ be a plane graph of girth at least 5 and let $x$ be a vertex of $G$ with neighbors $y_1$,\ldots, $y_d$ in order.
For $d\ge 3$, to \emph{split} $x$ is to replace $x$ by $d$ independent vertices $y^{1}, y^{2}, \ldots, y^{d}$, and to replace each edge $xy_i$ by edges $y_iy^{i}$ and $y_iy^{i+1}$ (where $y^{d+1}=y^1$). Then $C_x=y^1y_1y^2y_2\cdots y^{d-2}y_{d-1}y^{d-1}y_d$ is a cycle of length $2d$.
For $d=2$, $x$ is replaced by three independent vertices $y^{1}, y^{2}, y^{3}$, the edge $xy_1$ is replaced by edges $y_1y^{1}$, $y_1y^{2}$ and $y^2y^3$,  and the edge $xy_2$ is replaced by edges $y_2y^{1}$ and $y_2y^{3}$. In this case, $C_x$ is the $5$-cycle $y^1y_1y^2y^3y_2$.
Note that the girth of the graph obtained from $G$ by splitting $x$ is also at least five.

\begin{proof}[Proof of Theorem~\ref{X-girth5}]
Let $\lambda$ be the constant from Theorem~\ref{Ckfraccri}, and let $s=4\lambda k+5$.

Let $G$ be a plane graph of girth at least five and let $X$ be a set of vertices of $G$
of degree at most $k$, such that the distance between vertices of $X$ is at least $s$.
Let $X'\subseteq X$ consist of all vertices in $X$ of degree at least two.

For each $x\in X'$, by Theorem~\ref{x-tri-free} there exists an $\{x\}$-enhanced coloring $\psi_x$ of $G$.
Let $G'$ be the graph obtained from $G$ by splitting every vertex in $X'$.
Let $\varphi$ be a $(6:2)$-coloring of $S=\bigcup_{x\in X'} C_x$ defined as follows.
For each $x\in X'$ and each vertex $y\in V(C_x)$ corresponding to a neighbor of $x$ in $G$,
we let $\varphi(y)=\psi_x(y)$.  To other vertices of $S$, we extend the coloring arbitrarily (this is possible,
since they have degree two).

We claim that $\varphi$ extends to a $(6:2)$-coloring of $G'$;
suppose for a contradiction this is not the case.
Let $G''$ be a minimal subgraph of $G'$ such that $S\subset G''$ and $\varphi$ does not
extend to a $(6:2)$-coloring of $G''$.  Clearly, $G''\neq S$; let $G''_0$ be a connected component
of $G''$ such that $E(G''_0)\not\subseteq E(S)$, let $S''=S\cap G''_0$, and let $\varphi''$ be
the restriction of $\varphi$ to $S''$.
By the minimality of $G''$, observe that $G''_0$ is $S''$-critical for $(6:2)$-coloring and
$\varphi''$ does not extend to a $(6:2)$-coloring of $G''_0$.  Let $X''=\{x\in X':C_x\subseteq S''\}$.
If $|X''|\le 1$, and thus $X''\subseteq\{x\}$ for some $x\in X'$,
then note that $\psi_x$ would give an extension of $\varphi''$ to a $(6:2)$-coloring of $G''_0$, which is a contradiction.
Therefore, $|X''|\ge 2$.  By Theorem~\ref{Ckfraccri}, we have $|V(G''_0)|\leq \lambda\sum_{x\in X''}|C_x|\leq 2K\lambda|X''|$.

On the other hand, for each $x\in X''$, let $N_x$ denote the set of vertices of $G''_0$ at distance at
most $(s-3)/2=2K\lambda+1$ from $C_x$.  Since the distance between vertices of $X$ in $G$ is at least $s$,
the distance between $C_x$ and $C_{x'}$ in $G''_0$ for distinct $x,x'\in X''$ is at least $s-2$,
and thus $N_x\cap N_{x'}=\emptyset$.  Furthermore, since $G''_0$ is connected and $|X''|\ge 2$,
$N_x$ contains at least $2K\lambda+1$ vertices on a path from $x$ to $C_{x'}$.
Consequently, $|V(G''_0)|\ge \sum_{x\in X''}|N_x|\ge (2K\lambda+1)|X''|$, which is a contradiction.

Therefore, $\varphi$ indeed extends to a $(6:2)$-coloring of $G'$.
Then the restriction of $\varphi$ to $G-X$ extends to an $X$-enhanced coloring of $G$
(for each $x\in X'$ we set $\varphi(x)=\psi_x(x)$, and for each $x\in X\setminus X'$
we choose $\varphi(x)$ as a $3$-element subset of $\{1,\ldots,6\}$ disjoint from the color set of the neighbor of $x$,
if any).
\end{proof}

\section{Fractional coloring of planar graphs of girth at least 5}

We are now ready to prove our main result.

\begin{proof}[Proof of Theorem~\ref{frac-girth5}]
Let $s$ be the constant of Theorem~\ref{X-girth5} for $k=\Delta$, and let $M_\Delta=\Delta^s$.

Let $G$ be a planar graph of girth at least five with maximum degree at most $\Delta$.
Let $G'$ be the graph obtained from $G$ by adding edges between all pairs
of vertices at distance at most $s-1$. The maximum degree of $G'$ is less
than $\Delta^s$$=M_\Delta$, and thus $G'$ has a coloring by at most $M_\Delta$ colors.
Let $V_1,V_2 ,\ldots,V_{M_\Delta}$ be the color classes of this coloring.
Then the distance in $G$ between any two vertices of the same color class is at least $s$.
By Theorem~\ref{X-girth5}, for $i\in \{1,\ldots,M_\Delta\}$, $G$ has a $V_i$-enhanced set coloring $\varphi_i$
by subsets of $\{6i-5,6i-4,6i-3,6i-2,6i-1,6i\}$. Then $\varphi=\bigcup_{i=1}^{M_\Delta}\varphi_i$ is a set coloring of $G$
by subsets of $\{1,\ldots,6M_\Delta\}$ such that $|\varphi(v)|\geq 2(M_\Delta-1)+3=2M_\Delta+1$ for every $v\in V(G)$.
Therefore, $G$ has a $(6M_\Delta:2M_\Delta+1)$-coloring, and $\chi_f(G)\leq \frac{6M_\Delta}{2M_\Delta+1}$.
\end{proof}

\bibliographystyle{siam}
\bibliography{../data}

\end{document}